\newtheorem{thm*}{Theorem}
\newtheorem{prop*}{Proposition}
\newtheorem{theorem}{Theorem}[section]
\newtheorem{cor}[theorem]{Corollary}
\newtheorem{prop}[theorem]{Proposition}
\newtheorem{lemma}[theorem]{Lemma}
\theoremstyle{definition}
\newtheorem{example}[theorem]{Example}
\newtheorem{definition}[theorem]{Definition}
\newtheorem{rem}[theorem]{Remark}
\begin{document}
\raggedbottom

\title[Pseudo-Anosov-like maps on the infinite ladder surface]{On a family of pseudo-Anosov-like maps \\ on the infinite ladder surface}
\author{Nikita Agarwal}
\address{(N. Agarwal) Department of Mathematics\\
	University of Delhi\\
	Delhi 110007\\
	India}
\address{(N. Agarwal) Department of Mathematics\\
Indian Institute of Science Education and Research Bhopal\\
Bhopal Bypass Road, Bhauri \\
Bhopal 462 066, Madhya Pradesh\\
India}
\email{nagarwal@maths.du.ac.in}
\urladdr{https://sites.google.com/view/nikita-agarwal-du}

\author{Rohan Suresh Mahure}
\address{(R. S. Mahure) Department of Mathematics\\
Indian Institute of Science Education and Research Bhopal\\
Bhopal Bypass Road, Bhauri \\
Bhopal 462 066, Madhya Pradesh\\
India}
\email{rohansmahure@gmail.com, mahure22@iiserb.ac.in}
\urladdr{https://sites.google.com/iiserb.ac.in/rohanmahure/home}

\author{Kashyap Rajeevsarathy}
\address{(K. Rajeevsarathy) Department of Mathematics\\
Indian Institute of Science Education and Research Bhopal\\
Bhopal Bypass Road, Bhauri \\
Bhopal 462 066, Madhya Pradesh\\
India}
\email{kashyap@iiserb.ac.in}
\urladdr{https://home.iiserb.ac.in/~kashyap/}

\subjclass[2025]{Primary: 37A05, 57K20, Secondary: 37A05, 37A40, 37B10}

\keywords{Infinite-type surface, pseudo-Anosov map, mapping class group, symbolic dynamics, infinite ergodic theory.}

\begin{abstract}

Let $S_g$ be the closed surface of genus $g$, $\mathcal{L}$ be the infinite Jacob's ladder surface, and $\mathrm{Map}(S)$ denote the mapping class group of a surface $S$. Let $q_g:\mathcal{L}\to S_g$ be the regular infinite-sheeted cover with deck transformation group $\mathbb{Z}$. In this paper, we show the existence of ``pseudo-Anosov-like'' maps on $\mathcal{L}$ that arise as the lifts of Penner-type pseudo-Anosov maps on $S_g$ under the cover $q_g$. Furthermore, we establish that these lifts are topologically transitive, mixing, and support null recurrent dynamics. Moreover, we present concrete examples of infinite families of such maps on $\mathcal{L}$.

\end{abstract}

\maketitle

\section{Introduction}
A connected and oriented surface $S$ is said to be of \textit{infinite type} if its fundamental group is not finitely generated and of \textit{finite type} otherwise. Infinite-type surfaces are classified based on their genera, boundary components, and their ends, which can either be planar or non-planar~\cite{classInfTY2,classInfTY1}. Let $\mathrm{Homeo}^+(S)$ be the group of orientation-preserving self-homeomorphisms on $S$. The \textit{mapping class group} of $S$, denoted by $\mathrm{Map}(S)$, is defined as the group of isotopy classes of orientation-preserving self-homeomorphisms on $S$. The mapping class group of an infinite-type surface, also known as the \textit{big mapping class group}, is an uncountable and complete topological group~\cite[Chapter 12]{big}. One of the most significant questions in the study of big mapping class groups is whether it is possible to establish an analogue of the Nielsen-Thurston classification~\cite{flp,WT} in this context. Although some progress has been made in this area~\cite{bestvinaTowNT}, this question remains largely unresolved. Let $S_g$ denote the closed orientable surface of genus $g \geq 2$. Let $\mathcal{L}$ denote the infinite Jacob's ladder surface. Consider the infinite-sheeted cover $q_g:\mathcal{L} \to S_g$ induced by the $\mathbb{Z}$-action on $\mathcal{L}$ generated by the standard right handle shift map $h_{g-1}$ (by $g-1$ units) on $\mathcal{L}$ (see Figure~\ref{fig:infcover}). Motivated by the classification problem, in this paper, we analyze the dynamical properties of a family of maps on $\mathcal{L}$ that are lifts of Penner-type pseudo-Anosov maps on $S_g$ under $q_g$. First, we study the structure of lifts by constructing the lift homeomorphism (Section~\ref{sec:mapping-class}). Then, we turn our focus to the lifts of pseudo-Anosov maps. To study the dynamical properties of lifts of pseudo-Anosov maps, we lift Thurston's finite Markov partitions~\cite{flp} to create a countable symbolic coding (Section~\ref{subsec:symcodingsec}). Next, we translate this topological structure into the language of countable-state Markov shifts (Section~\ref{subsec:symcodingsec}) to get Thurston-style symbolic coding. Finally, by recognizing the block-tridiagonal structure of the resulting infinite matrix, we deploy quasi-birth-death process techniques to analyze recurrence (Section~\ref{sec:QBD}). 

Let $F \in \mathrm{Mod}(S_g)$ be a pseudo-Anosov mapping class that is represented by a homeomorphism $f:S_g\rightarrow S_g$ that lifts to a homeomorphism $\widetilde{f}:\mathcal{L}\rightarrow \mathcal{L}$ under $q_g$. First, we develop the requisite tools to study the properties of the dynamical system $(\mathcal{L},\widetilde{f})$ using the tools available for the well-studied dynamical system $(S_g, f)$. Thurston~\cite{flp} uses Markov partitions to show that $(S_g, f)$ is topologically transitive and mixing, with the topological entropy of $(S_g,f)$ being equal to $\ln(\lambda)$, where $\lambda$ is the stretch factor of $f$. By taking the pre-image of a Markov partition of $(S_g, f)$ under the map $q_g$, we obtain a Markov partition for $(\mathcal{L}, \widetilde{f})$, which in turn yields a symbolic coding for this dynamical system (see Section~\ref{subsec:symcodingsec}). Following the notation in the discussion above, we now state the first main result of this paper.

\begin{thm*}\label{thm1}
 For any lift $\widetilde{f}:\mathcal{L}\to\mathcal{L}$ of a pseudo-Anosov homeomorphism $f: S_g \to S_g$ under $q_g$ ($g\ge 3$), there exists a countable Markov partition for $(\mathcal{L},\widetilde{f})$. Hence, there exists a countable Markov shift $(\widetilde{\Sigma}_{\widetilde{A}}, \sigma)$, which is semi-conjugate to $(\mathcal{L}, \widetilde{f})$, that is, there exists an onto continuous map $\widetilde{\pi}: \widetilde{\Sigma}_{\widetilde{A}} \to 
        \mathcal{L}$ such that the following diagram commutes:
        \[
        \xymatrix{
            & \widetilde{\Sigma}_{\widetilde{A}} \ar_{\widetilde{\pi}}[d] \ar^\sigma[r] & \widetilde{\Sigma}_{\widetilde{A}} \ar^{\widetilde{\pi}}[d] \\
            & \mathcal{L} \ar^{\widetilde{f}}[r] & \mathcal{L}.
        }
        \]
        Also, $\widetilde{\pi}$ is a finite-to-one map.
    \end{thm*}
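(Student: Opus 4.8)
The plan is to pull the entire symbolic framework for $(S_g,f)$ back along the covering map $q_g$ and to check that every ingredient survives the lift. First I would fix a finite Markov partition $\mathcal{R}=\{R_1,\dots,R_n\}$ of $S_g$ for $f$, as furnished by the Thurston--FLP theory: each $R_i$ is a rectangle built from arcs of the stable and unstable foliations $\mathcal{F}^s,\mathcal{F}^u$, the interiors are pairwise disjoint, $f$ maps unstable/stable boundaries into the corresponding boundaries of the partition, and the resulting finite transition matrix $A$ yields a finite-to-one, onto, continuous semiconjugacy $\pi:\Sigma_A\to S_g$ with $\pi\circ\sigma=f\circ\pi$. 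Since each $R_i$ is a closed disk, hence simply connected, $q_g$ trivializes over (a neighborhood of) $R_i$, so $q_g^{-1}(R_i)=\bigsqcup_{k\in\mathbb{Z}}\widetilde{R}_{i,k}$ is a disjoint union of rectangle-lifts indexed by the deck group $\mathbb{Z}$. I would then take $\widetilde{\mathcal{R}}=\{\widetilde{R}_{i,k}:1\le i\le n,\ k\in\mathbb{Z}\}$ as the candidate countable Markov partition.

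Next I would verify the Markov property upstairs. Endowing $\mathcal{L}$ with the pulled-back foliations $q_g^{-1}(\mathcal{F}^{s,u})$ and transverse measures, and using that $q_g$ is a local homeomorphism, the product structure of each $\widetilde{R}_{i,k}$ and the boundary conditions transfer verbatim from $S_g$. From the intertwining $q_g\circ\widetilde{f}=f\circ q_g$ it follows that $\widetilde{f}(\widetilde{R}_{i,k})$ crosses exactly those lifts $\widetilde{R}_{j,l}$ lying over the rectangles $R_j$ crossed by $f(R_i)$; moreover, once $i,j$ and $k$ are fixed, the lift index $l$ is uniquely determined by the deck-translation offset attached to the transition $R_i\to R_j$. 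This defines the countable transition matrix $\widetilde{A}$ over the alphabet $\{(i,k)\}$, whose projection to the first coordinate recovers $A$. The associated two-sided countable Markov shift $(\Sigma_{\widetilde{A}},\sigma)$ is then the desired coding, with $\widetilde{\pi}((x_m))=\bigcap_{m\in\mathbb{Z}}\widetilde{f}^{-m}(\widetilde{R}_{x_m})$; the commutativity $\widetilde{\pi}\circ\sigma=\widetilde{f}\circ\widetilde{\pi}$ and the continuity of $\widetilde\pi$ (agreement on a long central block forces images into a small common rectangle intersection) follow as in the classical case.

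For surjectivity and the finite-to-one claim, which I expect to be the main obstacle, I would argue via unique lifting of codes. Given $\widetilde{x}\in\mathcal{L}$ with $x=q_g(\widetilde{x})$, surjectivity of $\pi$ produces a code $(i_m)\in\Sigma_A$ with $\pi((i_m))=x$; lifting this code through the rectangle-lift containing $\widetilde{x}$ at time $0$ and propagating the lift indices forward and backward by the deterministic offset rule yields $(x_m)\in\Sigma_{\widetilde{A}}$ with $\widetilde{\pi}((x_m))=\widetilde{x}$, giving surjectivity. The crux is that this lifting is a bijection between $\pi^{-1}(x)$ and $\widetilde{\pi}^{-1}(\widetilde{x})$: each downstairs code lifts to exactly one upstairs code realizing the specific point $\widetilde{x}$, since the index $k_0$ is forced by which lift of $R_{i_0}$ contains $\widetilde{x}$ and all remaining indices are then determined, while conversely the projection of any $\widetilde{\pi}$-code of $\widetilde{x}$ is a $\pi$-code of $x$. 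Hence $|\widetilde{\pi}^{-1}(\widetilde{x})|=|\pi^{-1}(x)|$, which is finite and uniformly bounded because $\pi$ is finite-to-one. The delicate points I would handle with care are the boundary and singular points, where $\widetilde{x}$ may lie in several rectangle-lifts at once: there I must check that this multiplicity is already faithfully recorded by the several $\pi$-codes of $x$ downstairs (so that, for instance, two adjacent lifts meeting at $\widetilde x$ correspond to two distinct codes of $x$), ensuring the bijection and therefore the finite bound persist.
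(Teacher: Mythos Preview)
Your proposal is correct and follows essentially the same route as the paper: lift the finite Markov partition rectangle by rectangle via the covering, observe that the transition $(i,k)\to(j,l)$ upstairs has $l$ uniquely determined once $i,j,k$ are fixed (the paper isolates this as a separate lemma, using that $f(\operatorname{int}R_i)\cap\operatorname{int}R_j$ is connected), deduce that every $\Sigma_A$-code lifts uniquely once its time-$0$ rectangle lift is specified, and conclude the bijection between $\pi^{-1}(q_g(\widetilde x))$ and $\widetilde\pi^{-1}(\widetilde x)$. The only cosmetic difference is that the paper labels the lifts by first building a connected ``fundamental domain'' $K_0=\bigcup_i\widetilde R_{(i,0)}$ (choosing each $\widetilde R_{(i,0)}$ to meet the previously chosen ones) and then translating by the deck group, whereas you invoke simple connectivity of the $R_i$ directly; both yield the same indexing, and the paper's choice is motivated by the later block-tridiagonal analysis rather than by anything needed here.
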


The matrix $\widetilde{A}$ related to the symbolic coding of $(\mathcal{L},\widetilde{f})$ is closely connected with the matrix corresponding to a symbolic coding of $(S_g,f)$. Its special structure allows us to use techniques from the theory of Quasi-Birth-Death Processes (QBD)~\cite{QBD}, and we obtain the following result.

\begin{thm*}\label{thm2}
If the matrix $\widetilde{A}$ related to the symbolic coding of $(\mathcal{L},\widetilde{f})$ is irreducible, then $\widetilde{A}$ is either transient or null recurrent.
\end{thm*}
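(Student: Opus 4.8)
The plan is to invoke the Vere-Jones--Gurevich classification of irreducible countable Markov shifts and rule out positive recurrence, so that recurrence is forced to be null recurrence. Write $\lambda$ for the spectral radius of the irreducible matrix $\widetilde{A}$ (equivalently, $\lambda^{-1}$ is its convergence parameter). By the basic dichotomy for irreducible nonnegative matrices, $\widetilde{A}$ is either transient or recurrent; in the transient case there is nothing to prove, so I would assume $\widetilde{A}$ is recurrent. Then there exist strictly positive left and right eigenvectors $\ell=(\ell_x)$ and $r=(r_x)$ with $\ell\widetilde{A}=\lambda\ell$ and $\widetilde{A}r=\lambda r$, and by Vere-Jones' theorem these are unique up to scaling \emph{precisely because} $\widetilde{A}$ is recurrent. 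In this language $\widetilde{A}$ is positive recurrent if and only if $\sum_x \ell_x r_x<\infty$, and null recurrent if and only if this sum diverges, so the whole statement reduces to proving $\sum_x \ell_x r_x=\infty$.

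The key structural input I would exploit is the $\mathbb{Z}$-symmetry that the cover $q_g$ imprints on $\widetilde{A}$. Since the Markov partition of $(\mathcal{L},\widetilde{f})$ supplied by the first theorem is the $q_g$-preimage of a finite Markov partition of $(S_g,f)$, the deck transformation $T=h_{g-1}$ permutes its rectangles freely, carrying the rectangle over one sheet to the rectangle over the next. Identifying the state space of $\widetilde{A}$ with $\mathbb{Z}\times\{1,\dots,n\}$ (levels $\times$ phases), $T$ induces a fixed-point-free bijection $\tau$ of the states, all of whose orbits are bi-infinite. Provided $\widetilde{f}$ commutes with $T$, the relation $\widetilde{f}\circ T=T\circ\widetilde{f}$ shows $\tau$ is a graph automorphism of the transition graph, that is, $\widetilde{A}_{\tau x,\tau y}=\widetilde{A}_{xy}$ for all states $x,y$; this is exactly the level-homogeneity that places $\widetilde{A}$ in the QBD framework.

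Granting this symmetry, the argument closes quickly. Because $\widetilde{A}_{\tau x,\tau y}=\widetilde{A}_{xy}$, the reindexed vectors $\ell\circ\tau^{-1}$ and $r\circ\tau^{-1}$ are again positive $\lambda$-eigenvectors of $\widetilde{A}$, so by the uniqueness above $\ell_{\tau x}=c^{-1}\ell_x$ and $r_{\tau x}=c'^{-1}r_x$ for some constants $c,c'>0$. Hence $\mu_x:=\ell_x r_x$ satisfies $\mu_{\tau^k x}=(cc')^{-k}\mu_x$ for every $k\in\mathbb{Z}$, and summing over the single bi-infinite $\tau$-orbit through any fixed state $x_0$ gives
\[
\sum_x \ell_x r_x \;\ge\; \sum_{k\in\mathbb{Z}}\mu_{\tau^k x_0} \;=\; \mu_{x_0}\sum_{k\in\mathbb{Z}} (cc')^{-k} \;=\;\infty,
\]
since a bi-infinite geometric series diverges for every value of its ratio. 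Thus $\sum_x\ell_x r_x=\infty$, so $\widetilde{A}$ cannot be positive recurrent, and a recurrent $\widetilde{A}$ must be null recurrent. Morally this is the statement that a Markov-modulated random walk on $\mathbb{Z}$, having no reflecting boundary, is never positive recurrent.

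The step I expect to be the main obstacle is not the eigenvector computation but the clean extraction of the symmetry $\widetilde{A}_{\tau x,\tau y}=\widetilde{A}_{xy}$ from the covering-space geometry, together with one genuine subtlety: a lift $\widetilde{f}$ need only satisfy $\widetilde{f}T\widetilde{f}^{-1}=T^{\pm1}$. When the sign is $+$, homogeneity is immediate as above; when it is $-$, the map $\widetilde{f}$ reflects the ladder and $\tau$ is only a symmetry of the reversed graph. I would handle this by passing to $\widetilde{f}^{\,2}$, which always commutes with $T$; a short computation shows that the corresponding matrix $\widetilde{A}^2$ is then genuinely level-homogeneous (its entries depend only on the level difference), so the displayed argument applies verbatim to $\widetilde{A}^2$, and the conclusion transfers back using that the recurrence type of an irreducible countable Markov shift is shared by its powers. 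I would also verify carefully, against the explicit partition built for the first theorem, that the QBD block structure is level-independent, namely that transitions couple only the finitely many phases across a bounded range of levels; this follows from the locality of the handle shift but is the point most in need of direct checking.
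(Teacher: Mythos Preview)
Your argument is correct and in fact coincides with the alternative proof the paper sketches in a remark immediately after the theorem: exhibit positive left and right $\lambda$-eigenvectors of $\widetilde{A}$, invoke their uniqueness in the recurrent case, and observe that their inner product diverges. The paper writes these eigenvectors down explicitly as $\widetilde{\mathbf{u}}=[\dots,\mathbf{u},\mathbf{u},\mathbf{u},\dots]$ and $\widetilde{\mathbf{v}}=[\dots,\mathbf{v}^T,\mathbf{v}^T,\mathbf{v}^T,\dots]^T$, using the block identity $A_{-1}+A_0+A_1=A$; your deduction of constants $c,c'$ from the $\tau$-symmetry is more abstract (and does not need to know that $c=c'=1$), but the content is the same, and your bi-infinite geometric series covers all cases at once.

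The paper's \emph{main} proof, however, takes a genuinely different route: it passes to the associated stochastic matrix $\widetilde{P}$, folds the two-sided ladder into a one-sided homogeneous QBD process with block matrices $Q_0,Q_1,Q_2$, and then invokes the QBD drift criterion (Latouche--Ramaswami) to rule out positive recurrence, observing that the two strict inequalities $\mathbf{u}P_{-1}\mathbf{v}<\mathbf{u}P_1\mathbf{v}$ and $\mathbf{u}P_1\mathbf{v}<\mathbf{u}P_{-1}\mathbf{v}$ cannot both hold. Your approach is more self-contained and makes the role of the $\mathbb{Z}$-symmetry transparent; the paper's QBD argument, by contrast, sets up machinery that it reuses later, since the vanishing of the same drift quantity pins down the null-recurrent case and feeds into a measure-theoretic statement about $\widetilde{\mu}(\widetilde{f}(K_j)\cap K_{j\pm1})$. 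Your concerns about the sign in $\widetilde{f}T\widetilde{f}^{-1}=T^{\pm1}$ and about the finite-range block structure are both resolved in the paper \emph{before} this theorem is stated, exactly along the lines you anticipate (pass to $\widetilde{f}^2$ if necessary; replace $f$ by its lift $f_k$ on $S_{g_k}$ so that $\widetilde{f}(K_0)\subset K_{-1}\cup K_0\cup K_1$).
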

 
Pseudo-Anosov maps on finite-type surfaces admit the structure of measurable dynamical systems. There is a unique invariant probability measure $\mu$ on $S_g$, known as the Parry measure, with respect to which the pseudo-Anosov map $f$ is measure-preserving, ergodic, and $\mu$ also maximizes the measure-theoretic entropy, that is, it satisfies the variational principle. This measure is built by assigning weights to rectangles defined via the transverse measures on the invariant foliations. In this setting, both the Poincar\'e recurrence theorem and Kac's lemma~\cite{YuriPolliDSES} are applicable, and such systems are always recurrent. The Poincaré recurrence theorem states that for a measure-preserving transformation on a finite measure space, almost every point of any measurable set returns to that set infinitely often. Building on this, Kac’s lemma quantifies the recurrence by asserting that the average first return time to a measurable set of positive measure is finite and equals the reciprocal of the measure of that set. Given a Penner-type pseudo-Anosov map on $S_g$, we show that there exists a unique lift $\widetilde{f}$ on $\mathcal{L}$ such that either $\widetilde{f} \in C_{\langle h_{g-1} \rangle}(\mathrm{Homeo}^+(\mathcal{L}))$ or $\widetilde{f}^2 \in C_{\langle h_{g-1} \rangle}(\mathrm{Homeo}^+(\mathcal{L}))$\footnote{$C_{\langle h_{g-1} \rangle}(\mathrm{Homeo}^+(\mathcal{L}))$ is the centralizer of the action of $h_{g-1}$ inside $\mathrm{Homeo}^+(\mathcal{L})$, that is, it is the collection of homeomorphisms in $\mathrm{Homeo}^+(\mathcal{L})$ that commute with the action of $h_{g-1}$}, and the matrix $\widetilde{A}$ corresponding to the symbolic coding of $(\mathcal{L},\widetilde{f})$ is irreducible (see Remark~\ref{rem:block-diag}). We will define a $\sigma$-finite measure $\widetilde{\mu}$ on $\mathcal{L}$ by lifting the measure $\mu$, with respect to which the lift $\widetilde{f}$ is measure-preserving, ergodic and conservative. As this measure is conservative, the Poincar\'e recurrence theorem is still applicable, but we will show that Kac's lemma does not hold. In particular, in Sections~\ref{subsec:TopTran}--\ref{subsec:ergnull}, we use Theorems~\ref{thm1} and~\ref{thm2} to establish the following result.
 
 \begin{thm*}\label{thm3}
For $g \geq 3$, let $f$ be a Penner-type pseudo-Anosov map on $S_g$ that factors into a product of liftable Dehn twists under $q_g$. Then there exists a unique lift $\widetilde{f}$ of $f$ on $\mathcal{L}$ such that: 
\begin{enumerate}[(i)]
\item either $\widetilde{f} \in C_{\langle h_{g-1} \rangle}(\mathrm{Homeo}^+(\mathcal{L}))$ or $\widetilde{f}^2 \in C_{\langle h_{g-1} \rangle}(\mathrm{Homeo}^+(\mathcal{L}))$,
\item the dynamical system $(\mathcal{L}, \widetilde{f})$ is topologically transitive and mixing, and 
\item the lift $\widetilde{f}$ is ergodic and null recurrent with respect to the measure $\widetilde{\mu}$.
\end{enumerate}
\end{thm*}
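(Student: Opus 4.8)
The plan is to transport all three conclusions across the finite-to-one semiconjugacy $\widetilde{\pi}:\Sigma_{\widetilde A}\to\mathcal L$ produced by Theorem~\ref{thm1}, so that (ii) and (iii) become statements about the countable Markov shift $(\Sigma_{\widetilde A},\sigma)$ and the lifted measure. First I would set up the algebraic description of the lifts: since $q_g$ is a regular $\mathbb Z$-cover with deck group $\langle h_{g-1}\rangle$, the lifts of $f$ form a $\langle h_{g-1}\rangle$-torsor $\{h_{g-1}^{k}\,\widetilde f_0\}_{k\in\mathbb Z}$, and conjugation by any lift is an automorphism of $\mathbb Z\cong\langle h_{g-1}\rangle$, hence equals $\pm 1$. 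The sign is an invariant of $f$ and yields exactly the dichotomy in (i): $\widetilde f h_{g-1}\widetilde f^{-1}=h_{g-1}$ gives $\widetilde f\in C_{\langle h_{g-1}\rangle}(\mathrm{Homeo}^+(\mathcal L))$, while $\widetilde f h_{g-1}\widetilde f^{-1}=h_{g-1}^{-1}$ gives $\widetilde f^2\in C_{\langle h_{g-1}\rangle}(\mathrm{Homeo}^+(\mathcal L))$. I would read off this sign from the action of $f$ on the cohomology class defining the cover.

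For existence and uniqueness of the distinguished lift I would use the hypothesis that $f$ is a product of liftable Dehn twists. Each liftable curve $c$ has $q_g^{-1}(c)$ equal to a disjoint $\mathbb Z$-family of simple closed curves, and the canonical equivariant lift of $T_c$ is the locally finite product of Dehn twists along these preimages; composing these canonical lifts gives a distinguished lift $\widetilde f$. I would then track the induced affine action $n\mapsto \epsilon n+c$ of a lift on the index set $\mathbb Z$ of sheets: the transition matrix $\widetilde A$ obtained by pulling back a Markov partition of $(S_g,f)$ is block-banded in this index (the QBD structure), and the ``drift'' governs its combinatorics. The canonical lift is the balanced one for which the net drift vanishes; for every other deck-translate the matrix decouples into non-communicating half-lines, i.e.\ it is block-diagonal and reducible (Remark~\ref{rem:block-diag}). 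This isolates the unique lift with $\widetilde A$ irreducible and gives the uniqueness asserted in the theorem.

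To prove (ii) I would first record that the base matrix $A$ of $(S_g,f)$ is primitive because $f$ is pseudo-Anosov, and argue that the block-banded lift $\widetilde A$ is irreducible and aperiodic. Topological transitivity and mixing of $(\mathcal L,\widetilde f)$ then follow from the corresponding properties of $(\Sigma_{\widetilde A},\sigma)$ by pushing dense and spreading orbits forward through the continuous surjection $\widetilde\pi$ and using that $\widetilde\pi$ is finite-to-one, so nonempty open sets have nonempty preimages meeting every cylinder. Care is needed because $\mathcal L$ is noncompact: topological mixing must be checked against the exhaustion by fundamental domains, using the two-sided (up and down) transitions guaranteed by irreducibility to move mass to arbitrarily high and low sheets.

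The hard part will be (iii), where I must work in infinite measure. I would realize $(\mathcal L,\widetilde f,\widetilde\mu)$ as the $\mathbb Z$-extension of the ergodic probability-preserving system $(S_g,f,\mu)$ determined by the integer displacement cocycle $\varphi$ recording the change of sheet; here $\widetilde\mu$ is the $h_{g-1}$-invariant lift of the Parry measure $\mu$, and the centralizer condition (i) is exactly what makes $\widetilde\mu$ be $\widetilde f$-invariant and $\sigma$-finite. The centralizing, balanced normalization forces $\int_{S_g}\varphi\,d\mu=0$, so by Atkinson's theorem the extension is conservative, hence recurrent; combined with Theorem~\ref{thm2} this rules out transience and leaves null recurrence, and since $\widetilde\mu(\mathcal L)=\infty$ the system cannot be positive recurrent, so Kac's lemma fails. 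Ergodicity of the extension I would deduce from ergodicity of $(S_g,f,\mu)$ together with the fact that $\varphi$ generates $\mathbb Z$ and is not cohomologous to a cocycle into a proper subgroup, via the standard ergodicity criterion for skew products over an ergodic base. The two delicate points I anticipate are verifying the zero mean of $\varphi$ directly from the liftable-twist construction and checking the aperiodicity and non-coboundary condition needed for ergodicity; both should reduce to the $\mathbb Z$-symmetry imposed by (i).
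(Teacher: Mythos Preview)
Your outline for parts (i) and (ii) is essentially the paper's argument: the $\pm1$ dichotomy is exactly Lemma~\ref{lemma:commlem} and Corollary~\ref{cor:forf2} (the paper obtains it by tracking the hyperelliptic involution through the explicit generating set, you obtain it from $\mathrm{Aut}(\mathbb Z)=\{\pm1\}$, which is the same fact), and the uniqueness via irreducibility of $\widetilde A$ is precisely Remark~\ref{rem:block-diag}. One small inaccuracy: for the other lifts $h_{g-1}^j\widetilde f$ the matrix $\widetilde A_j$ is block upper- or lower-triangular, not block-diagonal; the half-lines do communicate, but only in one direction. For (ii) the paper does exactly what you propose, proving irreducibility and aperiodicity of $\widetilde A$ from the Penner train-track matrix (Theorem~\ref{thm:irre}) and pushing mixing down through $\widetilde\pi$.

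For (iii) you take a genuinely different route. The paper stays entirely on the symbolic side: it shows that the shift $(\Sigma_{\widetilde A},\sigma,\mu_{\widetilde A})$ is ergodic (standard for an irreducible countable Markov shift), deduces conservativity from the general fact that an invertible ergodic measure-preserving map of a non-atomic $\sigma$-finite space is conservative, then gets recurrence of $\widetilde A$ from Poincar\'e recurrence and null recurrence from Theorem~\ref{thm:nonstoc-reducible-case}; ergodicity and null recurrence of $(\mathcal L,\widetilde f,\widetilde\mu)$ are then pushed down through $\widetilde\pi$. You instead view $(\mathcal L,\widetilde f,\widetilde\mu)$ as a $\mathbb Z$-extension of $(S_g,f,\mu)$ with displacement cocycle $\varphi$, invoke Atkinson's theorem (mean-zero $\Rightarrow$ recurrent) for conservativity, and appeal to a Schmidt-type essential-values criterion for ergodicity. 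Your observation that $\widetilde\mu(\mathcal L)=\infty$ already forces null recurrence once ergodicity and conservativity are known is cleaner than the paper's detour through the QBD drift condition. On the other hand, the paper's route to ergodicity is more direct here: irreducibility of $\widetilde A$ gives ergodicity of the shift immediately, whereas your route requires checking that $\varphi$ is not cohomologous to a cocycle into any proper subgroup $n\mathbb Z$ (and not a coboundary), which you correctly flag as one of the ``delicate points'' but do not resolve. That verification is doable---aperiodicity of $\widetilde A$ is essentially the $n\ge2$ case, and topological transitivity from (ii) rules out the coboundary case---but it is extra work that the paper's symbolic approach sidesteps.
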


\noindent For more details, we refer the reader to Corollary~\ref{cor:forf2} and Theorems~\ref{thm:TopoTranandmixingth}, \ref{thm:ergth}, and \ref{thm:nullreccmapth}. An intriguing question that Theorem~\ref{thm3} raises is that of the possible existence of pseudo-Anosov-like maps on $\mathcal{L}$ that have positively recurrent dynamics. While this question is not addressed in this paper, we hope to study this phenomenon in future. Finally, in Section~\ref{sec:example}, we will provide explicit examples of families of liftable Penner-type pseudo-Anosov maps on $S_g$ and their lifts on $\mathcal{L}$. 

\section{Preliminaries}
In this section, we introduce various concepts from mapping class groups~\cite{pri} and dynamical systems~\cite{inferg,kit,YuriPolliDSES} that are central to the theory we will develop in this paper. 
\subsection{\texorpdfstring{Liftability under the cover of $S_g$ by the Jacob's ladder surface}{Liftability under the cover of Sg by the Jacob's ladder surface}}\label{subsec:liftability}
Let $S_g$ be a closed, connected, orientable surface of genus $g \geq 3$. Let $\mathcal{L}$ be the Jacob's infinite ladder surface with two non-planar ends (as shown in the top of Figure~\ref{fig:infcover}). It is well-known that $\mathrm{Map}(\mathcal{L})$ is a Polish group with respect to the topology induced by the compact-open topology of $\mathrm{Homeo}^+(\mathcal{L})$, which makes it separable and completely metrizable.

Consider a compact exhaustion $(C_n)_{n\ge 1}$ of $\mathcal{L}$, where each $C_n$ is a compact subsurface of $\mathcal{L}$, for each $n\ge 1$, $C_n\subset C_{n+1}$, and $\cup_n C_n=\mathcal{L}$. Let $C(\mathcal{L})$ denote the countable collection of free isotopy classes of simple closed curves on the surface $\mathcal{L}$. We can enumerate these classes as $C(\mathcal{L}) = \{ c_n \}_{n \ge 1}$, such that for each $n \ge 1$, the set $\{c_1, c_2, \ldots, c_n\}$ is contained in $C_n$. We define a distance function $d: \mathrm{Map}(\mathcal{L}) \times \mathrm{Map}(\mathcal{L}) \to \mathbb{R}$ as
\[
d(F, G) = \inf_{n \ge 1} \{1,\ 2^{-n} \ :\ F(c_k) = G(c_k), \text{ for all } k \leq n \}.
\]
Then the function $\rho: \mathrm{Map}(\mathcal{L}) \times \mathrm{Map}(\mathcal{L}) \to \mathbb{R}$ given by
$
\rho(F, G) = d(F, G) + d(F^{-1}, G^{-1})
$
defines a complete metric on $\mathrm{Map}(\mathcal{L})$. The metric function arising from another compact exhaustion of $\mathcal{L}$ and another enumeration of curves in the collection $C(\mathcal{L})$ gives the same topology on $\mathrm{Map}(\mathcal{L})$ as is given by the metric $\rho$.

Let $h_{g-1}$ be the standard handle shift map on $\mathcal{L}$ with the deck transformation group $\langle h_{g-1} \rangle \cong \mathbb{Z}$. Let $q_g: \mathcal{L} \to S_g$ be the regular infinite-sheeted cover induced by an $\langle h_{g-1} \rangle$ action on $\mathcal{L}$ (see Figure~\ref{fig:infcover}). Also, let $p_k: S_{g_k} \to S_g$ be the standard regular $k$-sheeted cover of $S_g$ (see Figure~\ref{fig:finite-cover} for $k=3$), induced by the $\mathbb{Z}_k$-action on $S_{g_k}$, where $g_k = k(g - 1) + 1$. 
 \noindent
\begin{figure}[ht]
    \centering
   \begin{subfigure}[b]{0.45\textwidth}
        \centering
        \begin{tikzpicture}
           
            \node[anchor=south west,inner sep=0] (image) at (0,0) {\includegraphics[width=\linewidth]{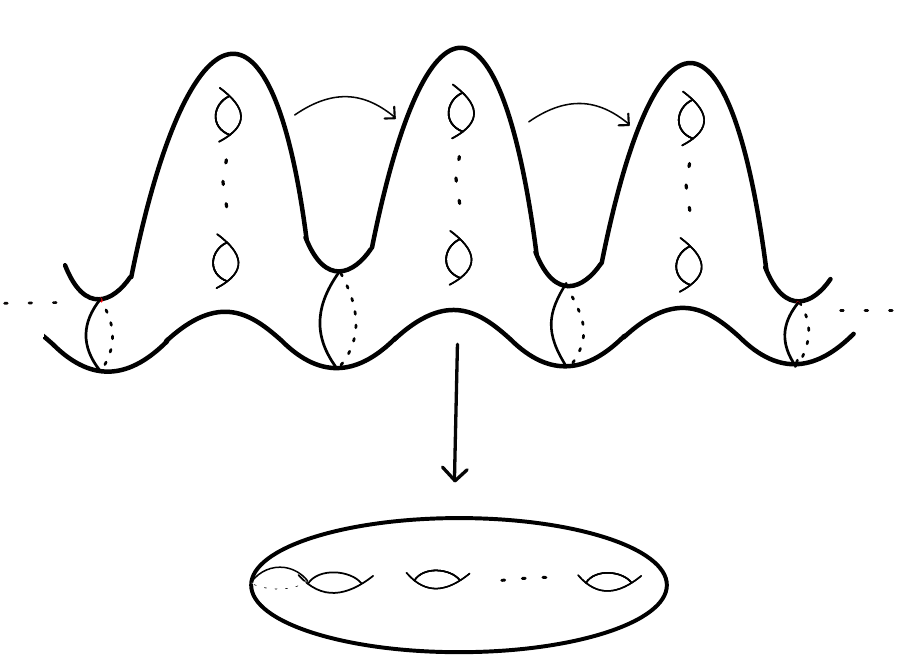}};

            \begin{scope}[x={(image.south east)},y={(image.north west)}]
                
                 \node at (0.13, 0.85) {$\mathcal{L}$};
                   \node at (0.24, 0.18) {$S_g$};
                \node at (0.55, 0.35) {$q_g$};

                \node at (0.37, 0.9) {$h_{g-1}$};
                \node at (0.65, 0.9) {$h_{g-1}$};
            \end{scope}
        \end{tikzpicture}
        \caption{The cover $q_g: \mathcal{L} \to S_g$.}
        \label{fig:infcover}
    \end{subfigure}
    \ \
   \begin{subfigure}[b]{0.45\textwidth}
        \centering
        \begin{tikzpicture}
            \node[anchor=south west,inner sep=0] (image) at (0,0) {\includegraphics[width=0.6\linewidth]{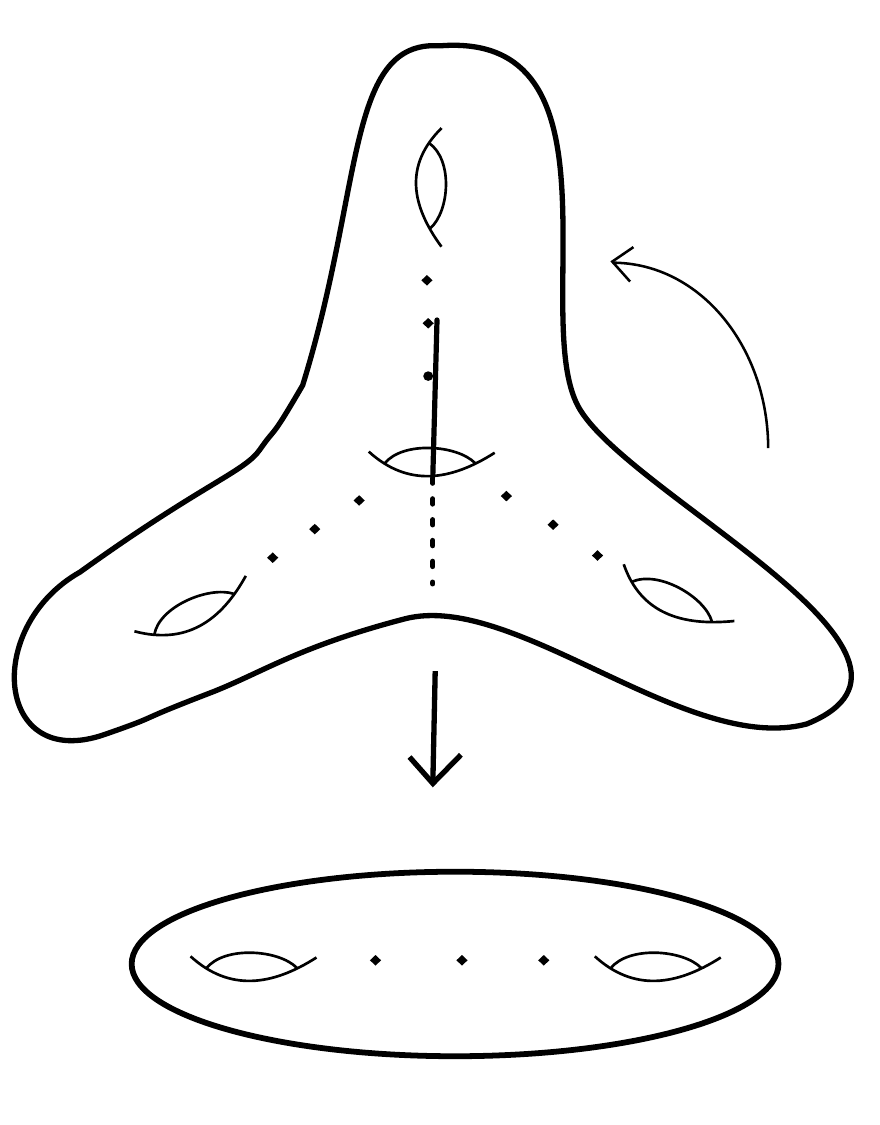}};

            \begin{scope}[x={(image.south east)},y={(image.north west)}]
               
                \node at (0.6, 0.35) {$p_3$};
                 \node at (0.25, 0.75) {$S_{g_3}$};
                \node at (0.9, 0.75) {$\frac{2\pi}{3}$};
                \node at (0.1, 0.18) {$S_g$};
            \end{scope}
        \end{tikzpicture}
        \caption{The cover $p_3: S_{g_3} \to S_g$.}
        \label{fig:finite-cover}
    \end{subfigure}
    \caption{Regular cyclic covers of $S_g$ by $\mathcal{L}$ and $S_{g_3}$.}
    \label{fig:both-covers}
\end{figure}

Let $\mathrm{LMod}_{q_g}(S_g)$ (resp. $\mathrm{LMod}_{p_k}(S_g)$) be the liftable mapping class group under the cover $q_g:\mathcal{L}\to S_g$ (resp. $p_k: S_{g_k} \to S_g$). Let $T_c$ denote the left-handed Dehn twist along a simple closed curve $c$ on $S_g$ (or $\mathcal{L}$). In order to study the lifts of the pseudo-Anosov mapping classes in $\mathrm{LMod}_{q_g}(S_g)$, we use the following explicit generating set for $\mathrm{LMod}_{q_g}(S_g)$, given by Dey~\textit{et al.}~\cite[Theorem 3]{deyGen}.
 
\begin{theorem}\label{thm:deygen}
      For $g \geq 3$, 
      \begin{enumerate}[(i)]
          \item $\mathrm{LMod}_{q_g}(S_g) = \bigcap\limits_{k \geq 2} \mathrm{LMod}_{p_k}(S_g)$, and
          \item $\textstyle{\mathrm{LMod}_{q_g}(S_g) = \langle T_{a_1}, T_{a_2}, T_{b_2}, \ldots, T_{a_g}, T_{b_g}, T_{c_1}, \ldots, T_{c_{g-1}}, T_{\alpha_1}T^{-1}_{\iota(\alpha_1)}, \ldots, T_{\alpha_{g-2}}T^{-1}_{\iota(\alpha_{g-2})}, \iota \rangle}$,      \\
          where $\iota$ represents the hyperelliptic involution. The curves are shown in Figure~\ref{fig:generators}.
     \end{enumerate}   
\end{theorem}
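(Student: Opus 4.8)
The plan is to reduce both parts to the homological (symplectic) action, exploiting that each of the covers $q_g$ and $p_k$ is a regular abelian cover determined by a single primitive class $\phi\in H^1(S_g;\mathbb{Z})=\mathrm{Hom}(\pi_1(S_g),\mathbb{Z})$: the class whose kernel $K=\ker\phi$ corresponds to $q_g$, and whose mod-$k$ reduction $\phi_k=\phi\bmod k$ corresponds to $p_k$. Since $K$ and each $\ker\phi_k=\phi^{-1}(k\mathbb{Z})$ are \emph{normal} in $\pi_1(S_g)$, the standard liftability criterion for regular covers says that $F$ lifts under $q_g$ (resp.\ $p_k$) if and only if $F_*$ preserves $K$ (resp.\ $\ker\phi_k$). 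Because $\mathrm{Aut}(\mathbb{Z})=\{\pm 1\}$ and $\mathrm{Aut}(\mathbb{Z}/k)=(\mathbb{Z}/k)^{\times}$, this becomes the purely homological conditions $\phi\circ F_*=\pm\phi$ and $\phi_k\circ F_*=u_k\,\phi_k$ for some unit $u_k\in(\mathbb{Z}/k)^{\times}$, respectively.

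For part (i), the inclusion $\mathrm{LMod}_{q_g}(S_g)\subseteq\bigcap_{k\ge 2}\mathrm{LMod}_{p_k}(S_g)$ is immediate, since reducing $\phi\circ F_*=\pm\phi$ modulo $k$ gives $\phi_k\circ F_*=\pm\phi_k$ with $\pm 1\in(\mathbb{Z}/k)^{\times}$. For the reverse inclusion I would set $\psi=\phi\circ F_*$, a surjection $\pi_1(S_g)\to\mathbb{Z}$, and assume $\psi\equiv u_k\phi\pmod{k}$ for every $k$. Viewing $\phi,\psi$ as vectors in $H^1(S_g;\mathbb{Z})\cong\mathbb{Z}^{2g}$, the mod-$p$ relations force every $2\times 2$ minor of $\binom{\phi}{\psi}$ to vanish modulo all primes, hence to vanish in $\mathbb{Z}$; thus $\phi$ and $\psi$ are $\mathbb{Q}$-proportional, and primitivity of $\phi$ yields $\psi=a\phi$ with $a\in\mathbb{Z}$. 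Finally $a\equiv u_p\pmod{p}$ is a unit for every prime $p$, so $p\nmid a$ for all $p$, forcing $a=\pm 1$ and $F\in\mathrm{LMod}_{q_g}(S_g)$. I expect this number-theoretic step to be the only delicate point of part (i).

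For part (ii), I would pass to the symplectic representation $\mathrm{Map}(S_g)\twoheadrightarrow\mathrm{Sp}(2g,\mathbb{Z})$ with kernel the Torelli group $\mathcal{I}_g$. Under Poincar\'e duality $\phi$ corresponds to a primitive vector $v$, and the criterion above identifies $\mathrm{LMod}_{q_g}(S_g)$ with the full preimage of $\mathrm{Stab}_{\pm}(v)=\{A\in\mathrm{Sp}(2g,\mathbb{Z}):Av=\pm v\}$. The strategy is then threefold. First, realize the \emph{positive} stabilizer $\mathrm{Stab}_{+}(v)$ by symplectic transvections, using that $T_c$ is liftable precisely when $\langle v,[c]\rangle=0$, i.e.\ $[c]\in\ker\phi$; the $2g-1$ classes $[a_1],[a_2],[b_2],\dots,[a_g],[b_g]$ span $v^{\perp}$ (which is why $T_{b_1}$, whose curve pairs nontrivially with $v$, is deliberately absent), and their transvections, together with the separating curves $c_1,\dots,c_{g-1}$, generate $\mathrm{Stab}_{+}(v)$. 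Second, since $\iota$ acts as $-I$ on homology it sends $v\mapsto -v$, so adjoining $\iota$ upgrades $\mathrm{Stab}_{+}(v)$ to $\mathrm{Stab}_{\pm}(v)$. Third, one must generate the kernel $\mathcal{I}_g$: for $g\ge 3$ the Torelli group is generated by bounding pair maps (Johnson, Powell), and each $T_{\alpha_i}T^{-1}_{\iota(\alpha_i)}$ is such a map, because $\iota(\alpha_i)$ is disjoint from and homologous to $\alpha_i$, so that the transvection by $[\iota(\alpha_i)]=-[\alpha_i]$ equals that by $[\alpha_i]$ and the product lies in $\mathcal{I}_g$.

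The main obstacle is this last step: proving that the \emph{specific finite} list of bounding pair maps $T_{\alpha_i}T^{-1}_{\iota(\alpha_i)}$, together with the products and commutators available from the liftable twists and $\iota$, already generate the entire Torelli kernel rather than merely lying inside it. Concretely, one must express Johnson's (or Powell's) Torelli generators as words in the proposed generators, typically via lantern- and chain-relation bookkeeping that exploits the $\iota$-symmetry of the configuration in Figure~\ref{fig:generators}, possibly combined with an induction on $g$ using the subsurface containing a single rung of the ladder. Confirming that the chosen transvections surject onto $\mathrm{Stab}_{+}(v)$ and that the resulting group is closed under the extension by $\mathcal{I}_g$ are the remaining points requiring care; everything else reduces to routine checks of which twists are liftable.
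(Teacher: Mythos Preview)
The theorem you are attempting to prove is not proved in this paper at all: it is quoted verbatim as \cite[Theorem~2]{deyGen} and used as a black box in the preliminaries. There is therefore no ``paper's own proof'' to compare your proposal against; the authors simply invoke the result of Dey~\textit{et al.}\ and move on. Any assessment of your argument has to be made on its own merits rather than by comparison.

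On those merits, your outline for part~(i) is sound and essentially complete: the homological liftability criterion for regular abelian covers is standard, and the number-theoretic step (proportionality of $\phi$ and $\psi$ from the vanishing of all $2\times 2$ minors modulo every prime, then $a=\pm 1$ from being a unit modulo every prime) is correct. For part~(ii) your three-step strategy---surject onto $\mathrm{Stab}_{+}(v)$, adjoin $\iota$ to reach $\mathrm{Stab}_{\pm}(v)$, then generate the Torelli kernel---is the natural one, and you have correctly isolated the genuine difficulty: it is not enough that the $T_{\alpha_i}T_{\iota(\alpha_i)}^{-1}$ are bounding pair maps, one must show that \emph{these particular} maps, together with conjugation by the listed liftable twists and $\iota$, already reach a full Johnson/Powell generating set for $\mathcal{I}_g$. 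That is exactly the substantive content of the cited paper, and your sketch does not supply it. One small slip: the curves $c_1,\dots,c_{g-1}$ in Figure~\ref{fig:generators} are the chain curves between adjacent handles and are \emph{non}-separating, not separating; they contribute nontrivial transvections, which you in fact need for step~(1).
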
   
      \begin{figure}[htbp]
          \centering
          \begin{subfigure}[b]{0.45\linewidth}
           \centering
          \begin{tikzpicture}
            \node[anchor=south west, inner sep=0] (image) at (0,0) {\includegraphics[width=5cm]{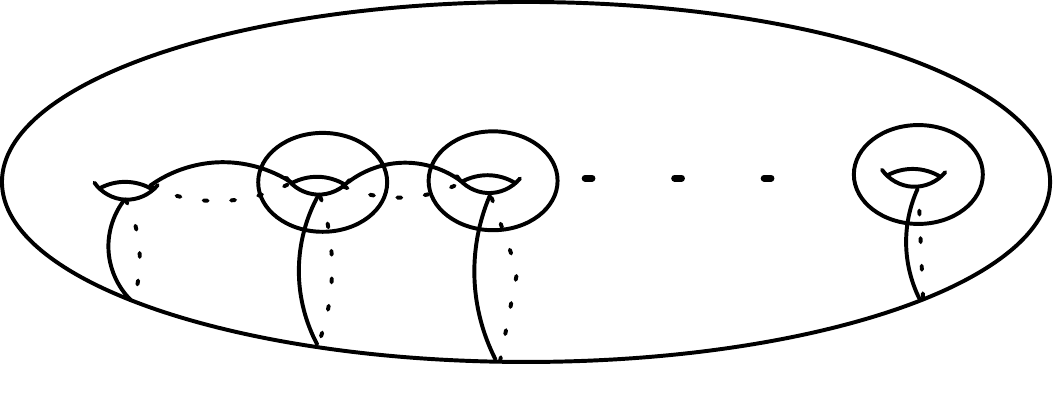}};

            \begin{scope}[x={(image.south east)}, y={(image.north west)}]
                
                \node at (0.1, 0.1) {$a_1$};
                \node at (0.3, 0.05) {$a_2$};
                \node at (0.5, 0.05) {$a_3$};
                \node at (0.9, 0.1) {$a_g$};

                \node at (0.3, 0.8) {$b_2$};
                \node at (0.49, 0.8) {$b_3$};
                \node at (0.82, 0.8) {$b_g$};

                \node at (0.2, 0.75) {$c_1$};
                \node at (0.4, 0.75) {$c_2$};
            \end{scope}
        \end{tikzpicture}
        \caption{Curves giving generators of $\mathrm{LMod}_{q_g}(S_g)$ that are Dehn twists.}
        \label{fig:dehn-curves}
    \end{subfigure}
    \hspace{0.2cm}
    \begin{subfigure}[b]{0.45\linewidth}
        \centering
        \begin{tikzpicture}
            \node[anchor=south west, inner sep=0] (image) at (0,0) {\includegraphics[width=5.5cm]{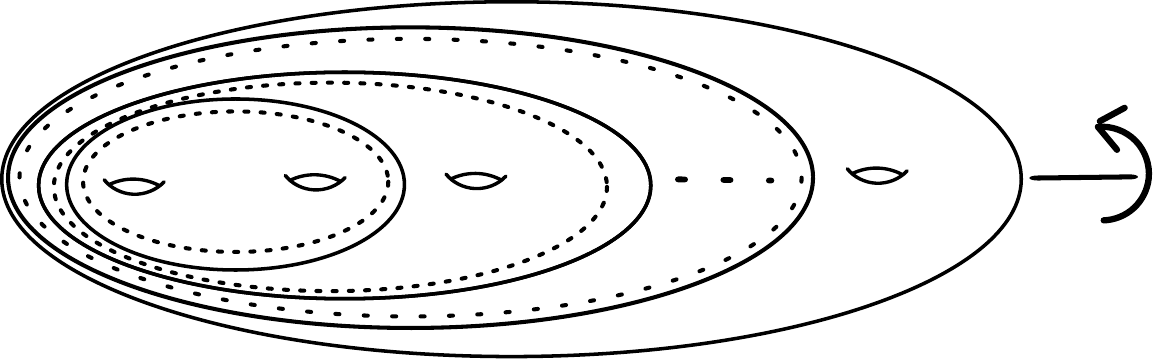}};

            \begin{scope}[x={(image.south east)}, y={(image.north west)}]
             
                \node at (0.2, 0.6) {$\alpha_1$};
                \node at (0.45, 0.6) {$\alpha_2$};
                \node at (0.78, 0.62) {$\alpha_{g-2}$};
                \node at (1.02, 0.62) {$\pi$};
                \node at (1.01, 0.3) {$\iota$};
            \end{scope}
        \end{tikzpicture}
        \caption{Curves giving generators of $\mathrm{LMod}_{q_g}(S_g)$ that are bounding pair maps and the hyperelliptic involution $\iota$.}
        \label{fig:bp-curves}
    \end{subfigure}
    \caption{Curves giving generators of $\mathrm{LMod}_{q_g}(S_g)$.}
    \label{fig:generators}
\end{figure}

We are interested in understanding the lifts of pseudo-Anosov homeomorphisms. Let $F \in \mathrm{LMod}_{q_g}(S_g)$ be a pseudo-Anosov mapping class represented by a homeomorphism $f : S_g \to S_g$ that lifts to a homeomorphism $\widetilde{f} : \mathcal{L} \to \mathcal{L}$ under the cover $q_g$. Let $(\mathcal{F}_s, \mu_s)$ and $(\mathcal{F}_u, \mu_u)$ be a transverse pair of singular arational measured foliations preserved by $f$ (see~\cite[Section 15.2.1]{pri}). Note that we call a measured foliation arational if it does not have any closed leaf. Furthermore, there exists a real number $\lambda > 1$, known as the \textit{stretch factor}, such that
\[
f((\mathcal{F}_u, \mu_u)) = (\mathcal{F}_u, \lambda \mu_u), \text{ and } f((\mathcal{F}_s, \mu_s)) = (\mathcal{F}_s, \lambda^{-1} \mu_s).
\]
Note that $(\mathcal{F}_s, \mu_s)$ (resp. $(\mathcal{F}_u, \mu_u)$) are known as stable (resp. unstable) foliations.  The following explicit construction of pseudo-Anosov mapping classes is due to Penner~\cite{penn}.
 
 \begin{theorem}\label{thm:penner}
Let $\mathcal{C} = \{\gamma_1,\ldots,\gamma_n\}$ and $\mathcal{D} = \{\gamma_{n+1},\ldots, \gamma_{n+m}\}$ be multicurves in $S_g $ such that $\mathcal{C} \cup \mathcal{D}$ fills $S_g$. 
\begin{enumerate}[(i)]
\item Then any product of positive powers of the $T_{\gamma_i}$, for $1 \leq i \leq n$, and negative powers of the $T_{\gamma_{n+j}}$, for $1 \leq j \leq m$, where each $T_{\gamma_i}$ and each $T_{\gamma_{n+j}}$ appears at least once, is pseudo-Anosov.
\item To each $T_{\gamma_k}$, associate a matrix $B_k = I +A_k\Omega$, where $I$ is the identity matrix
of order $n + m,$ $A_k$ is the $(n + m) \times (n + m)$ matrix all of whose entries are zero, except for the $(kk)^{th}$ entry, which is 1, and $\Omega$ is the incidence matrix of the embedded graph $\mathcal{C} \cup \mathcal{D}$ in $S_g$. Then, for a pseudo-Anosov $f = T_{\gamma_{i_1}}^{\delta_{i_1}}\ldots T_{\gamma_{i_s}}^{\delta_{i_s}}$ as in $(i)$, its stretch factor $\lambda$ is the largest eigenvalue of the (Perron) incidence intersection matrix $M_f = B_{i_1}^{|\delta_{i_1}|}\cdots B_{i_s}^{|\delta_{i_s}|}$.
\end{enumerate}
\end{theorem}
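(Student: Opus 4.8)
The plan is to realize $F$ as a product of Dehn twists acting on a single \emph{invariant train track}, to identify the matrices $B_k$ with the induced linear action on its cone of transverse measures, and then to extract the pseudo-Anosov conclusion from Perron--Frobenius positivity together with a ping-pong argument on projective measured foliations. The matrices in part (ii) are the bookkeeping device that turns the geometry of twisting into linear algebra, so I would aim to prove (i) and (ii) simultaneously by making that identification explicit.

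First I would isotope $\mathcal{C}$ and $\mathcal{D}$ into minimal position and smooth each intersection point of $\mathcal{C}\cup\mathcal{D}$ in the direction compatible with the sign partition (resolving along the $\gamma_i$ for $1\le i\le n$ one way and along the $\gamma_{n+j}$ for $1\le j\le m$ the other way), producing a bigon track $\tau$. Because $\mathcal{C}\cup\mathcal{D}$ fills $S_g$, the complementary regions of $\tau$ are disks (and boundary-parallel pieces), so $\tau$ is a filling, birecurrent track; hence the cone $W(\tau)$ of non-negative transverse measures carried by $\tau$ is top-dimensional and every measure carried with full support is a filling measured foliation. I would then compute the effect of the generators on weight vectors: a single twist along $\gamma_k$ replaces the weight $w_k$ by $w_k+\sum_j i(\gamma_k,\gamma_j)\,w_j$ and fixes the remaining weights, which in matrix form is exactly $w\mapsto (I+A_k\Sigma)w=B_k w$, where $\Sigma$ is the incidence matrix. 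Consequently $F(\tau)\prec\tau$, and the induced self-map of $W(\tau)$ is the Perron matrix $M_F=B_{i_1}^{|\delta_{i_1}|}\cdots B_{i_s}^{|\delta_{i_s}|}$.

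Next I would verify that $M_F$ is a primitive non-negative integral matrix: filling of $\mathcal{C}\cup\mathcal{D}$ makes the intersection graph connected, so $\Sigma$ is irreducible, and the hypothesis that each $T_{\gamma_k}$ occurs at least once then forces the directed graph of $M_F$ to be strongly connected and aperiodic. Perron--Frobenius supplies a simple largest eigenvalue $\lambda>1$ with a strictly positive eigenvector, which I would read as a filling transverse measure, that is, a measured foliation $(\mathcal{F}_u,\mu_u)$ carried by $\tau$ with $F(\mathcal{F}_u,\mu_u)=(\mathcal{F}_u,\lambda\mu_u)$; this proves (ii). For (i) I would run the symmetric construction on $F^{-1}$, which is again a Penner-type word, using the oppositely smoothed track $\tau'$, obtaining $(\mathcal{F}_s,\mu_s)$ with $F(\mathcal{F}_s,\mu_s)=(\mathcal{F}_s,\lambda^{-1}\mu_s)$ (the contracting factor agreeing with the expanding one, since a pseudo-Anosov map and its inverse have equal stretch factors). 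The filling hypothesis guarantees that $W(\tau)$ and $W(\tau')$ meet only at the origin, so $(\mathcal{F}_u,\mu_u)$ and $(\mathcal{F}_s,\mu_s)$ are transverse and jointly filling; a ping-pong argument then gives north--south dynamics on $\mathcal{PML}(S_g)$, and the standard Thurston criterion that a map admitting a transverse pair of filling invariant foliations scaled by $\lambda^{\pm1}$ is pseudo-Anosov finishes the proof.

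The hard part will be the train-track bookkeeping in the middle step: checking that $\tau$ is genuinely carried by $F(\tau)$ with the twist action being precisely $B_k$, controlling the bigons created by the smoothing, and confirming recurrence and transverse recurrence of $\tau$. A second delicate point is the primitivity of $M_F$ — deducing from irreducibility of $\Sigma$ and the ``each twist appears'' hypothesis that the graph of $M_F$ is strongly connected and aperiodic. Finally, the transversality of the two cones $W(\tau)$ and $W(\tau')$, which is exactly what separates genuine pseudo-Anosov behavior from a reducible or finite-order outcome, is where the filling assumption on $\mathcal{C}\cup\mathcal{D}$ is indispensable.
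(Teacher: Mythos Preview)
The paper does not prove this theorem at all: it is stated as a known result, attributed to Penner with a citation (\cite{penn}), and used as a black box in the rest of the paper. Your sketch is essentially Penner's original argument (build an invariant bigon track from the smoothed filling system, identify the twist action on weights with $B_k$, invoke Perron--Frobenius, and pair the expanding and contracting foliations), so there is nothing to compare on the paper's side. One small caution: your parenthetical that the contracting factor equals $\lambda^{-1}$ ``since a pseudo-Anosov map and its inverse have equal stretch factors'' is circular at that point in the argument; in Penner's proof the equality of the two Perron eigenvalues is obtained directly from the linear algebra (the matrix for $F^{-1}$ on the dual track is conjugate to $M_F^{\mathsf T}$), not by assuming $F$ is already pseudo-Anosov.
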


In view of Theorem~\ref{thm:deygen}, an $F \in \mathrm{LMod}_{q_g}(S_g)$ is represented by an $f$ that lifts to $f_k : S_{g_k} \to S_{g_k}$ under the cover $p_k$ for every $k \geq 2$. Furthermore, each $f_k$ lifts to an $\widetilde{f}$ on $\mathcal{L}$ under $q_{g_k}$. Moreover, the map $f_k$ is also pseudo-Anosov with the same stretch factor as $f$, preserving a transverse pair of singular measured foliations $(\mathcal{F}^k_s, \mu^k_s)$ and $(\mathcal{F}^k_u, \mu^k_u)$ on $S_{g_k}$ (arising as pullbacks of the foliations on $S_g$ preserved by $f$) satisfying
\[p_k((\mathcal{F}^k_s, \mu^k_s)) = (\mathcal{F}_s, \mu_s), \text{ and } p_k((\mathcal{F}^k_u, \mu^k_u)) = (\mathcal{F}_u, \mu_u).\]
Consequently, the pair of foliations on $S_g$ preserved by $f$, pull back to a pair of foliations on $\mathcal{L}$ preserved by $\widetilde{f}$. These foliations on $\mathcal{L}$ are arational with a countably infinite set of isolated singularities. We summarize these observations in the following proposition.

\begin{prop}\label{prop:foliation}
Let $F \in \mathrm{LMod}_{q_g}(S_g)$ be a pseudo-Anosov mapping class represented by a homeomorphism $f : S_g \to S_g$ preserving a transverse pair $(\mathcal{F}_s,\mu_s)$ and $(\mathcal{F}_u,\mu_u)$ of singular measured foliations with stretch factor $\lambda$. Let the homeomorphism $\widetilde{f} : \mathcal{L} \to \mathcal{L}$ be a lift of $f$ under the cover $q_g$. Then
 \begin{enumerate}[(i)]
\item There exists a transverse pair $(\widetilde{\mathcal{F}}_s,\widetilde{\mu}_s)$ and $(\widetilde{\mathcal{F}}_u,\widetilde{\mu}_u)$ of singular measured foliations on $\mathcal{L}$ that are arational with a countably infinite set of isolated singularities satisfying
\[
q_g((\widetilde{\mathcal{F}}_s,\widetilde{\mu}_s)) = (\mathcal{F}_s, \mu_s), \text{ and } q_g((\widetilde{\mathcal{F}}_u,\widetilde{\mu}_u)) = (\mathcal{F}_u, \mu_u).
\]
\item Furthermore,
\end{enumerate}
            \[\widetilde{f}((\widetilde{\mathcal{F}}_u,\widetilde{\mu}_u)) = (\widetilde{\mathcal{F}}_u,\lambda\widetilde{\mu}_u),  \text{ and }  \widetilde{f}((\widetilde{\mathcal{F}}_s,\widetilde{\mu}_s)) = (\widetilde{\mathcal{F}}_s,\lambda^{-1}\widetilde{\mu}_s).\]
\end{prop}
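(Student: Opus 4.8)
The plan is to construct $(\widetilde{\mathcal{F}}_s,\widetilde{\mu}_s)$ and $(\widetilde{\mathcal{F}}_u,\widetilde{\mu}_u)$ as the \emph{pullbacks} of $(\mathcal{F}_s,\mu_s)$ and $(\mathcal{F}_u,\mu_u)$ under $q_g$, exploiting that $q_g$ is a genuine (unramified) covering map and hence a local homeomorphism. Since a singular measured foliation is a purely local datum (a foliated chart atlas away from singularities, together with a holonomy-invariant transverse measure), and since every point of $\mathcal{L}$ has an evenly covered neighborhood on which $q_g$ restricts to a homeomorphism, the foliated charts of $\mathcal{F}_u$ pull back to foliated charts on $\mathcal{L}$, defining $\widetilde{\mathcal{F}}_u$; its leaves are the path components of $q_g^{-1}(\text{leaves of }\mathcal{F}_u)$, its singular set is $q_g^{-1}(\mathrm{Sing}(\mathcal{F}_u))$, and its transverse measure is given by $\widetilde{\mu}_u(\tau)=\mu_u(q_g(\tau))$ for arcs $\tau$ short enough to lie in an evenly covered neighborhood, extended by additivity over a subdivision into short sub-arcs. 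The same structure is obtained by first lifting to the finite cover $S_{g_k}$, where $f_k$ is a genuine pseudo-Anosov on a closed surface with arational invariant foliations, and then pulling back under $q_{g_k}$, since $q_g=p_k\circ q_{g_k}$ and pullback of foliations is transitive; this route makes the pseudo-Anosov data (prong types, transversality, stretch factor $\lambda$) transparent.

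For part (i) the routine verifications are that the pulled-back charts have transition maps covering those of $\mathcal{F}_u$ and hence preserve the leaf structure, that $\widetilde{\mu}_u$ is well defined and holonomy invariant (both local conditions, preserved by a local homeomorphism), and that $\widetilde{\mathcal{F}}_s$ and $\widetilde{\mathcal{F}}_u$ are transverse (again local). Because $q_g$ is infinite-sheeted and $\mathrm{Sing}(\mathcal{F}_u)$ is a nonempty finite set, its preimage is countably infinite, and each preimage point is isolated and of the same prong type since $q_g$ is a local homeomorphism injective on evenly covered neighborhoods. The projection identities $q_g((\widetilde{\mathcal{F}}_u,\widetilde{\mu}_u))=(\mathcal{F}_u,\mu_u)$ (and similarly for the stable pair) are immediate from the construction. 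For arationality I would argue by projection: a closed leaf or a saddle connection of $\widetilde{\mathcal{F}}_u$ maps under $q_g$ into a single leaf of $\mathcal{F}_u$, forcing that leaf to be closed or to contain a saddle connection, which contradicts the arationality of the invariant foliations of the pseudo-Anosov $f$; the same argument handles $\widetilde{\mathcal{F}}_s$.

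For part (ii) I would use naturality of pullback together with the defining relation $q_g\circ\widetilde{f}=f\circ q_g$ of a lift, which yields $q_g\circ\widetilde{f}^{-1}=f^{-1}\circ q_g$. That $\widetilde{f}$ maps leaves of $\widetilde{\mathcal{F}}_u$ to leaves follows because $f$ preserves $\mathcal{F}_u$ and $\widetilde{f}$ covers $f$, so $\widetilde{f}(\widetilde{\mathcal{F}}_u)=\widetilde{\mathcal{F}}_u$ as foliations. For the transverse measure, given a short arc $\tau$ transverse to $\widetilde{\mathcal{F}}_u$,
$$ \widetilde{\mu}_u\bigl(\widetilde{f}^{-1}(\tau)\bigr)=\mu_u\bigl(q_g(\widetilde{f}^{-1}(\tau))\bigr)=\mu_u\bigl(f^{-1}(q_g(\tau))\bigr)=\lambda\,\mu_u\bigl(q_g(\tau)\bigr)=\lambda\,\widetilde{\mu}_u(\tau), $$
where the middle equality uses $f((\mathcal{F}_u,\mu_u))=(\mathcal{F}_u,\lambda\mu_u)$; additivity upgrades this from short arcs to arbitrary transverse arcs, giving $\widetilde{f}((\widetilde{\mathcal{F}}_u,\widetilde{\mu}_u))=(\widetilde{\mathcal{F}}_u,\lambda\widetilde{\mu}_u)$, and the identical computation with $\lambda^{-1}$ settles the stable pair.

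The main obstacle lies in the arationality claim on the infinite-type surface $\mathcal{L}$: while the absence of closed leaves and saddle connections lifts cleanly by the projection argument above, global minimality (density of every leaf), which holds on the closed surface $S_g$, need not persist verbatim on $\mathcal{L}$, since a leaf that is dense downstairs may distribute itself across the infinitely many handles of the ladder. I would therefore take care to fix precisely which properties the term \emph{arational} denotes in this setting and verify only those that are genuinely preserved under the infinite cover; passing through $S_{g_k}$ does not circumvent this point, as the step $q_{g_k}:\mathcal{L}\to S_{g_k}$ is itself infinite-sheeted.
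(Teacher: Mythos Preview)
Your proposal is correct and follows essentially the same approach as the paper: the proposition is stated there as a summary of the preceding paragraph, which simply observes that the foliations pull back under the covering $q_g$ (factoring through the finite covers $p_k:S_{g_k}\to S_g$ to see that the intermediate lifts $f_k$ are genuine pseudo-Anosovs with the same stretch factor), and that the resulting foliations on $\mathcal{L}$ are arational with countably many isolated singularities. Your write-up supplies considerably more detail than the paper does---in particular your explicit computation for part~(ii) and your careful discussion of what ``arational'' should mean on $\mathcal{L}$---but the underlying idea is the same pullback construction.
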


Note that if $(\widetilde{\mathcal{F}}_s,\widetilde{\mu}_s)$ (resp. $(\widetilde{\mathcal{F}}_u,\widetilde{\mu}_u)$) has closed leaves, then $(\mathcal{F}_s, \mu_s)$ (resp. $(\mathcal{F}_u, \mu_u)$) also has closed leaves which contradicts that $(\mathcal{F}_s, \mu_s)$ and $(\mathcal{F}_u, \mu_u)$ are arational foliations. Thus, $(\widetilde{\mathcal{F}}_s,\widetilde{\mu}_s)$ and $(\widetilde{\mathcal{F}}_u,\widetilde{\mu}_u)$ are also arational foliations. Also, $(\widetilde{\mathcal{F}}_s,\widetilde{\mu}_s)$ (resp. $(\widetilde{\mathcal{F}}_u,\widetilde{\mu}_u)$) fills the $\mathcal{L}$ as $(\mathcal{F}_s, \mu_s)$ (resp. $(\mathcal{F}_u, \mu_u)$) fills $S_g$ i.e. every essential simple closed curve has positive intersection with the foliation. From Proposition~\ref{prop:foliation}, it is apparent that the lifts of pseudo-Anosov maps on $S_g$ under $q_g$ exhibit analogous topological properties. This motivates the study of the dynamical properties of these lifts such as mixing, ergodicity and recurrence.     

\subsection{Dynamics of pseudo-Anosov maps}
\subsubsection{Symbolic Dynamics}
In~\cite{flp}, Thurston introduces the technique of symbolic coding to study the dynamics of pseudo-Anosov maps, which we will adapt for $(\mathcal{L}, \widetilde{f})$. A common strategy to study a given dynamical system is via its semi-conjugacy with a well-understood model. In the context of pseudo-Anosov maps on $S_g$, subshifts of finite type serve as such models. In the case of maps on the ladder surface which are lifts of pseudo-Anosov maps on $S_g$, we exhibit a semi-conjugacy between $(\mathcal{L}, \widetilde{f})$ and a countable Markov shift to understand the dynamical and measure-theoretic properties of $(\mathcal{L}, \widetilde{f})$.

\begin{definition}
 Let $A$ be a matrix with entries from $\{0,1\}$ indexed by a countable set.
     Corresponding to $A$, we have a directed graph $G = (V, E)$.
     Let $\Sigma_A$ be the set of all infinite admissible paths on the graph, which consist of two-sided sequences of vertices.
     Let $\sigma$ be the left shift map on such sequences, that is, $\sigma: \Sigma_A \to \Sigma_A$ is defined for $\mathbf{x} = (x_i)_{i \in \mathbb{Z}}$ as $(\sigma(\mathbf{x}))_i = x_{i+1}$.
\begin{enumerate}[(i)]
    \item The pair $(\Sigma_A, \sigma)$ is known as a \textit{countable Markov shift}.
    \item If $A$ is a finite $n \times n$ matrix, then $(\Sigma_A, \sigma)$ is known as a \textit{subshift of finite type}.
\end{enumerate}
\end{definition}

The concept of Markov partition (see~\cite[Expos\'{e} 10]{flp}) allows for a symbolic coding of a pseudo-Anosov map on a finite type surface, that is, it gives a semi-conjugacy from a subshift of finite type to the dynamical system on a finite type surface with a pseudo-Anosov homeomorphism. However, in the case of an infinite-type surface, we modify the classical definition of Markov partitions by allowing for countably infinitely many partitions of the surface.  

\begin{definition}\label{def:MP}
A \textit{Markov partition} for a map $f$ on a surface $S$ that preserve the transverse pair of measured foliations $(\mathcal{F}_s, \mu_s)$ and $(\mathcal{F}_u, \mu_u)$ with stretch factor $\lambda>1$ is a (finite or countable) collection $\mathcal{R}=\{R_1,R_2,\ldots,R_n,\ldots\}$ of good birectangles constructed using this pair of foliations such that \begin{enumerate}[(i)]
			\item $\operatorname{int}(R_i)\cap \operatorname{int}(R_j)=\emptyset$, for $i\neq j$.
	\item $\bigcup_{i=1}^\infty R_i=S$.
			\item If $x \in \operatorname{int}(R_i)$ and $f(x) \in \operatorname{int}(R_j)$, then      
			\[f(\mathcal{F}^s(x,R_i))\subset \mathcal{F}^s(f(x),R_j), \text{ and } f^{-1}(\mathcal{F}^u(f(x),R_j))\subset \mathcal{F}^u(x,R_i).\]
			\item If $x\in$ $\operatorname{int}(R_i)$ and
			$f(x)\in \operatorname{int}(R_j)$, 
			then \[f(\mathcal{F}^u(x,R_i))\cap R_j=\mathcal{F}^u(f(x),R_j), \text{ and } f^{-1}(\mathcal{F}^s(f(x),R_j))\cap R_i=\mathcal{F}^s(x,R_i).\]
			This means that $f(R_i)$ goes across $R_j$ just once.
		\end{enumerate}
\end{definition}

Note that a good birectangle is an embedding of $[0,1]\times [0,1]$ in the surface whose sides lie alternately in the leaves of two transverse measured foliations (stable and unstable), with opposite sides contained in the parallel leaves of the same foliation, that is, opposite sides are disjoint (see~\cite[Expos\'{e} 10]{flp}). 

By a result of Thurston, there exists a finite Markov partition for any pseudo-Anosov map on the surface $S_g$ which gives a semi-conjugacy from a subshift of finite type to $(S_g,f)$.
 
  \begin{theorem}[Thurston~\cite{flp}]
        For a surface of finite type $S_g$ and a pseudo-Anosov homeomorphism $f: S_g \to S_g$, there exists a finite Markov partition. Consequently, there exists a subshift of finite type $(\Sigma_A, \sigma)$ such that $(\Sigma_A, \sigma)$ is semi-conjugate to a dynamical system $(S_g, f)$, that is, there exists an onto continuous map $\pi: \Sigma_A \to S_g$ such that the following diagram commutes:
        \[\xymatrix{
            & \Sigma_A \ar_\pi[d] \ar^\sigma[r] & \Sigma_A \ar^\pi[d] \\
            & S_g \ar^f[r] & S_g.
        }\]
    \end{theorem}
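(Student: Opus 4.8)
The plan is to follow Thurston's original construction, built directly from the invariant foliations. Since $f$ is pseudo-Anosov, it preserves the transverse pair $(\mathcal{F}_s,\mu_s)$ and $(\mathcal{F}_u,\mu_u)$ with stretch factor $\lambda>1$, uniformly expanding the transverse measure $\mu_u$ along unstable leaves by $\lambda$ and contracting $\mu_s$ along stable leaves by $\lambda^{-1}$. Away from the finite singular set these two foliations furnish a local product structure, which I would use to define good birectangles: embedded rectangles whose two unstable sides lie along leaves of $\mathcal{F}_u$ and whose two stable sides lie along leaves of $\mathcal{F}_s$. For such an $R_i$ and $x\in\mathrm{int}(R_i)$, the segments $\mathcal{F}^s(x,R_i)$ and $\mathcal{F}^u(x,R_i)$ of Definition~\ref{def:MP} are simply the intersections of $R_i$ with the stable and unstable leaves through $x$. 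As a first step, compactness of $S_g$ together with this local product structure yields a finite cover of $S_g$ by good birectangles of finite diameter, each chosen small enough to be embedded and to contain no singularity in its interior.

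The substantive step is to refine this cover into one satisfying the Markov (boundary-invariance) property. Write $\partial^s R_i$ for the union of the unstable sides of $R_i$ and $\partial^u R_i$ for the union of its stable sides, and set $\partial^s\mathcal{R}=\bigcup_i\partial^sR_i$ and $\partial^u\mathcal{R}=\bigcup_i\partial^uR_i$. Conditions (iii)–(iv) of Definition~\ref{def:MP} are then equivalent to the inclusions
\[
f(\partial^s\mathcal{R})\subseteq\partial^s\mathcal{R}\qquad\text{and}\qquad f^{-1}(\partial^u\mathcal{R})\subseteq\partial^u\mathcal{R}.
\]
To arrange these I would build the boundary out of separatrix segments emanating from the singular points. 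The finite singular set is permuted by $f$, so the union $W^u$ of all unstable separatrices and the union $W^s$ of all stable separatrices are each $f$-invariant. Taking a sufficiently long finite collection of unstable and stable separatrix segments and extending them until they cut $S_g$ into birectangles produces a cover with $\partial^s\mathcal{R}\subset W^u$ and $\partial^u\mathcal{R}\subset W^s$; combined with the expansion of $f$ along $W^u$ and of $f^{-1}$ along $W^s$, this forces the two displayed inclusions and hence the Markov property.

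Given such a partition $\{R_1,\dots,R_n\}$, I would define the $n\times n$ transition matrix $A$ by $A_{ij}=1$ if $f(\mathrm{int}(R_i))\cap\mathrm{int}(R_j)\neq\emptyset$ and $A_{ij}=0$ otherwise, and let $(\Sigma_A,\sigma)$ be the associated subshift of finite type. The coding map $\pi:\Sigma_A\to S_g$ is then defined by
\[
\pi(\mathbf{x})=\bigcap_{k\in\mathbb{Z}}f^{-k}\bigl(R_{x_k}\bigr).
\]
Because $f$ contracts stable segments by $\lambda^{-1}$ while $f^{-1}$ contracts unstable segments by $\lambda^{-1}$, the diameters of the finite intersections $\bigcap_{|k|\le N}f^{-k}(R_{x_k})$ shrink to $0$, so the full intersection is a single point and $\pi$ is well defined; the same estimate yields continuity. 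Surjectivity follows from the covering property~(ii) together with the Markov condition, which guarantees that each point has an admissible itinerary that is realized, and the relation $f\circ\pi=\pi\circ\sigma$ is immediate from the definition of $\pi$ and of $A$, giving the commuting square.

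The main obstacle is the refinement step: a generic rectangular cover fails the Markov property, and the heart of the argument is the careful construction of the partition boundary from invariant separatrix segments so that $\partial^s\mathcal{R}$ is forward invariant and $\partial^u\mathcal{R}$ is backward invariant. Once the partition is in hand, the transition matrix, the contraction estimates, and the verification that $\pi$ is a continuous surjective semi-conjugacy are routine.
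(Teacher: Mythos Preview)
The paper does not supply its own proof of this theorem; it is quoted as a result of Thurston with a reference to \cite{flp} (see also the pointer to Expos\'e~10 just before Definition~\ref{def:MP}). Your sketch is a correct outline of the classical construction carried out there: building the partition boundary out of stable and unstable separatrix segments emanating from the singularities so that the invariance conditions $f(\partial^s\mathcal{R})\subseteq\partial^s\mathcal{R}$ and $f^{-1}(\partial^u\mathcal{R})\subseteq\partial^u\mathcal{R}$ hold, then defining $\pi$ by nested intersections and using the uniform contraction/expansion to verify that $\pi$ is a well-defined continuous surjection commuting with the dynamics.
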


The topology on $\Sigma_A$ is generated by the cylinder sets defined as follows:
\[[w_{-k} \ldots w_{-1}.w_0 \ldots w_k] = \{ (x_n) \in \Sigma_A : x_{-k}\ldots x_{-1}. x_0 \ldots x_k = w_{-k} \ldots w_{-1}.w_0\ldots w_k \}.\]
With this topology on $\Sigma_A$, the map $\pi$ is continuous. The properties of the matrix $A$ are crucial in understanding the dynamical properties of the dynamical system $(\Sigma_A, \sigma)$, and in turn of $(S_g,f)$ via the semi-conjugacy $\pi$. We now state a few useful definitions from matrix theory.  
    
   \begin{definition}
   Let A be a non-negative integer matrix whose rows and columns are indexed by the same countable set. 
   \begin{enumerate}[(i)]
    \item $A$ is \textit{irreducible} if for every pair of vertices $i$ and $j$ there is an $N\ge 1$ with $(A^N)_{ij}>0$.
    \item For index $i$, the number $p(i)=\text{gcd}\{N : (A^N)_{ii}>0\}$ is the \textit{period} of index $i$. If $A$ is irreducible, then the period of every index is the same and called the \textit{period} of $A$.
    \item A is said to be \textit{aperiodic} if it has period one.
    \end{enumerate}
\end{definition}

The dynamical system $(\Sigma_A, \sigma)$ is topologically transitive and mixing if and only if the matrix $A$ is irreducible and aperiodic. If $A$ is a finite square irreducible and aperiodic matrix, the Perron–Frobenius theorem guarantees the existence of a dominant Perron eigenvalue $\lambda > 1$, which is the spectral radius of $A$. This eigenvalue corresponds to the stretch factor of $f$, along with left and right Perron eigenvectors $\mathbf{u}$ and $\mathbf{v}$, respectively, where each entry of both $\mathbf{u}$ and $\mathbf{v}$ is strictly positive, denoted as $\mathbf{u} > 0$ and $\mathbf{v} > 0$. For the system $(\Sigma_A, \sigma)$, the topological entropy is equal to $\log(\lambda)$. 

For the system $(\Sigma_A, \sigma)$ with $A$ being irreducible and aperiodic, there is a well-known invariant measure known as the \textit{Parry measure}, which is defined using the Perron vectors $\mathbf{u}$ and $\mathbf{v}$ as follows. Let  $\mathbf{u} = (u_1, \ldots, u_m)$ and $\mathbf{v} = (v_1, \ldots, v_m)$, respectively. Define
\begin{eqnarray}\label{eq:P-p}
P_{ij} = \frac{A_{ij} \ v_j}{v_i\ \lambda},
\text{ and }
p_i = \frac{u_i\ v_i}{\sum_{i=1}^m u_i\ v_i}.
\end{eqnarray}
Here, $P$ is a row-stochastic matrix, and $\mathbf{p}$ is its normalized left eigenvector. Now, consider the $\sigma$-algebra $\mathcal{B}$ on $\Sigma_A$ generated by cylinder sets. We define a measure $\mu_A: \mathcal{B} \to \mathbb{R}$ on cylinder sets as
\[
\mu_A([w_0 \ldots w_k]) = p_{w_0} P_{w_0 w_1} \cdots P_{w_{k-1} w_k}.
\]
The measure $\mu_A$ is an invariant probability measure for $\sigma$ and is ergodic. It turns out that $\mu = \mu_A \circ \pi^{-1}$ is an invariant ergodic probability measure on $(S_g, f)$, which is locally equivalent to the product of transverse measures on the stable and unstable foliations. Since $\mu$ is a probability measure, the map $f$ satisfies the Poincar\'e recurrence theorem and Kac's lemma~\cite{YuriPolliDSES}. Our aim is to investigate dynamical properties of $(\mathcal{L}, \widetilde{f})$, for a pseudo-Anosov map $f$, by constructing a symbolic coding via a Markov partition of $(\mathcal{L}, \widetilde{f})$ by lifting a Markov partition of $(S_g,f)$.

\subsubsection{Dynamical Properties}
One of the key features of a pseudo-Anosov map $f$ is that it exhibits rich dynamical properties such as topological transitivity, mixing, and positive topological entropy. Our goal is to examine which of these properties are also exhibited by its lift $\widetilde{f}$. Due to the non-compactness of $\mathcal{L}$, additional care is required when analyzing the dynamics of $\widetilde{f}$, especially for properties like recurrence, which are taken for granted in the setting of compact surfaces due to Poincar\'e recurrence theorem. Thurston~\cite{flp} demonstrates that if $f$ is pseudo-Anosov, then the matrix $A$ is irreducible and aperiodic. Consequently, $(S_g, f)$ is topologically transitive and mixing. This leads to the conclusion that the infinite leaves of $(\mathcal{F}_s, \mu_s)$ and $(\mathcal{F}_u, \mu_u)$ are dense in $S_g$. It also follows that the periodic points of $f$ are dense in $S_g$. Having obtained the invariant foliations on $\mathcal{L}$ (Proposition~\ref{prop:foliation}), our primary objective is to study the dynamical properties of the system $(\mathcal{L}, \widetilde{f})$ analogous to the well-understood system $(S_g, f)$. 

For the system $(\mathcal{L}, \widetilde{f})$, we now turn to the study of its measure-theoretic properties. In the case of $(S_g, f)$, the probability measure $\mu=\mu_A\circ \pi^{-1}$ is ergodic and maximizes the measure-theoretic entropy, that is, it satisfies the variational principle. This measure is built by assigning weights to rectangles defined via the transverse measures on the invariant foliations. In the infinite-type setting, we can similarly define an invariant measure for $\widetilde{f}$ on $\mathcal{L}$, however this measure is $\sigma$-finite rather than finite. We aim to determine for which lifts $\widetilde{f}$ such a measure exists and is ergodic. The non-finiteness of the measure raises several issues, which leads us to investigate whether the system is conservative, and if so, whether it is positive recurrent or null recurrent or transient. A conservative system is defined as follows.

\begin{definition}
    Let $(X,\mathcal{B},\mu)$ be a $\sigma$-finite measure space and 
let $T:X\rightarrow X$ be a measure-preserving transformation. 
Then $T$ is known as \emph{conservative} if there is no wandering set of positive measure. 
that is, there does not exist a measurable set $W\in\mathcal{B}$ with $\mu(W)>0$ such that 
the sets $\{T^{-n}W\}_{n\ge 0}$ are pairwise disjoint 
(see~\cite{inferg}).
\end{definition}

\section{\texorpdfstring{Lifts of maps on ${S}_g$ to the ladder surface}{Lifts of maps on Sg to the ladder surface}}\label{sec:mapping-class}
In this section, we determine a lift of each element in the generating set of $\mathrm{LMod}_{q_g}(S_g)$ (from Theorem~\ref{thm:deygen}) that commutes with $h_{g-1}$. Let $x_0 \in S_g$ be a fixed point of a homeomorphism $f$ representing a generator of $\mathrm{LMod}_{q_g}(S_g)$. Consider a point $\widetilde{x}_0 \in \mathcal{L}$ such that $q_g^{-1}(x_0) = \{h_{g-1}^{i} \widetilde{x}_0\ :\ i \in \mathbb{Z}\}$ and the unique lift $\widetilde{f}$ of $f$ on $\mathcal{L}$ such that $\widetilde{f}(\widetilde{x}_0)= \widetilde{x}_0$, and let $\widetilde{x}_i=h_{g-1}^{i} \widetilde{x}_0$, for all $i\in\mathbb{Z}$. Since the group $\langle h_{g-1} \rangle$ of deck transformations of the cover $q_g: \mathcal{L} \to S_g$ acts transitively on the set of lifts, any other lift of $f$ on $\mathcal{L}$ is of the form $h_{g-1}^i \circ \widetilde{f}$, for some $i \in \mathbb{Z}$. Furthermore, if the lift $\tilde{f}$ commutes with $h_{g-1}$ (i.e., $\widetilde{f}$ belongs to the centralizer $C_{\langle h_{g-1} \rangle}(\mathrm{Homeo}^+(\mathcal{L}))$), then it follows that $\widetilde{f}(\widetilde{x}_i) = \widetilde{x}_i$, for all $i \in \mathbb{Z}$. We prove that if a lift $f'$ of $f$ on $\mathcal{L}$ does not commute with $h_{g-1}$, then ${f'}^2$ (which is a lift of $f^2$) commutes with $h_{g-1}$ (see Corollary~\ref{cor:forf2}). Since we are only concerned about the dynamical properties of the lift, we will assume without loss of generality that the lift $\tilde{f}$ commutes with $h_{g-1}$. 

\subsection{\texorpdfstring{Lifts of the generators of $\mathrm{LMod}_{q_g}(S_g)$}{Lifts of the generators of LMod(S)}}

We recall from Theorem~\ref{thm:deygen} that $\mathrm{LMod}_{q_g}(S_g)$ is generated by the set
	\[\left\lbrace T_{a_1}, T_{a_2}, T_{b_2}, \ldots, T_{a_g}, T_{b_g}, T_{c_1}, \ldots, T_{c_{g-1}}, T_{\alpha_1}T^{-1}_{\iota(\alpha_1)}, \ldots, T_{\alpha_{g-2}}T^{-1}_{\iota(\alpha_{g-2})}, \iota \right\rbrace.
\]
We will now discuss the liftability of each of the generators from this collection. This will be summarized later in Proposition~\ref{prop:gen-lift}.

\subsubsection{Lifting Dehn twists} \label{subsec:dtwist} First, note that a Dehn twist $T_{\gamma}\in \mathrm{Mod}(S_g)$ lifts on $\mathcal{L}$ under $q_g$ if and only if $q_{g}^{-1}(\gamma)=\{\gamma_{i}\ : \ i\in\mathbb{Z}\}$, where each $\gamma_{i}=h_{g-1}^{i}(\gamma_0)$ is an essential simple closed curve. We will prove later that the limit $\lim_{k\to\infty}\prod_{i=-k}^k T_{\gamma_i}$ exists and lies in $\mathrm{Map}({\mathcal{L}})$, and it is a lift of $T_\gamma$. Using this, the lifts of all the Dehn twists $T_{a_j}, T_{b_j}, T_{c_j}$, from the above generating set of $\textstyle{\mathrm{LMod}_{q_g}(S_g)}$ can be visualized. 
\subsubsection{Lifting the hyperelliptic involution} 
The lift of the hyperelliptic involution $\iota$ is given by a $\pi$-rotation of $\mathcal{L}$ about the axis as shown in Figure~\ref{fig:lift-i-L}.
	
	\begin{figure}[htbp]
	\centering
		\begin{tikzpicture}
				\node[anchor=south west,inner sep=0] (image) at (0,0) {\includegraphics[width=0.5\linewidth]{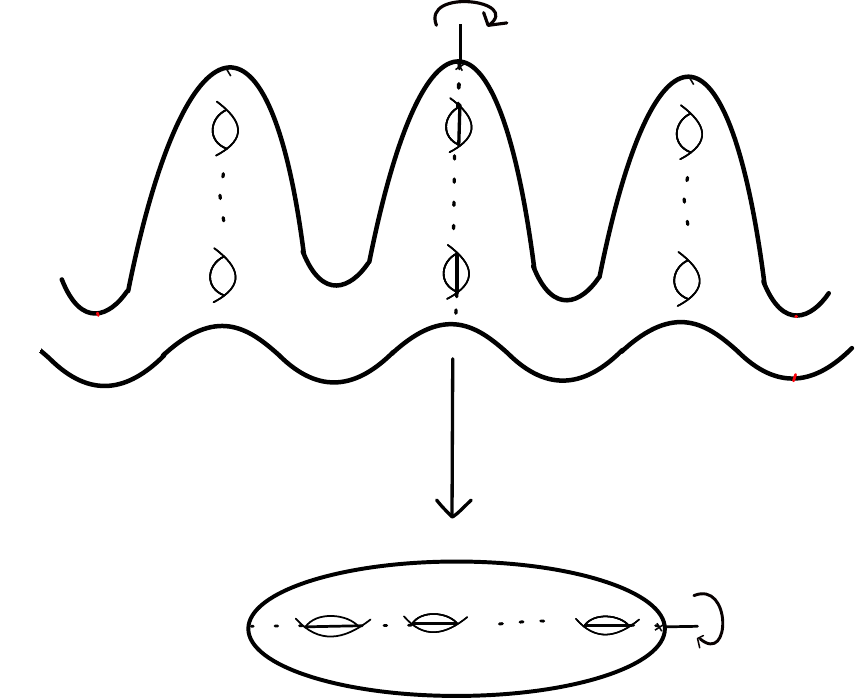}};
				\begin{scope}[x={(image.south east)}, y={(image.north west)}]
				\small
					
					\node[label=above:$\tilde{x}_{-1}$] at (0.27,0.86) {};
					\node[label=above:$\tilde{x}_0$] at (0.48,0.87) {};
					\node[label=above:$\tilde{x}_1$] at (0.8,0.86) {};
					
					\node[label=below:$x_0$] at (0.8,0.11) {};
					
					\node at (0.57,0.35) {$q_g$};
					
					\node at (0.55,1.02) {$\pi$};
					\node at (0.87,0.12) {$\pi$};
					
				\end{scope}
			\end{tikzpicture}
		\caption{The lift $\widetilde{\iota}$ of $\iota$ to $\mathcal{L}$.}
		\label{fig:lift-i-L}
	\end{figure}
\subsubsection{Lifting the bounding pair maps} \label{subsec:bpair} First, we consider a homeomorphism from the mapping class $T_{\alpha_n} T^{-1}_{\iota(\alpha_n)}$. Fix the curve $\alpha_n$ from the isotopy class and consider an annular neighborhood $N$ of $\alpha_n$. Then $\iota(N)$ is an annular neighborhood of $\iota(\alpha_n)$. Consider a parametrization of $N$, $\zeta:[0,2\pi]\times [0,1] \to N$ such that $\zeta([0,2\pi]\times \{1/2\}) = \alpha_n$ and $\zeta((0,1/2))=\zeta((2\pi,1/2))$. Define $\mathcal{T}: [0,2\pi]\times [0,1] \to [0,2\pi]\times [0,1]$ by $\mathcal{T}((\theta,t)) = ((\theta + 2\pi t)\pmod{2\pi},t)$. Similarly, we can define $\iota\circ\zeta:[0,2\pi]\times [0,1] \to \iota(N)$ and $\mathcal{T}_{-1}: [0,2\pi]\times [0,1] \to [0,2\pi]\times [0,1]$ by $\mathcal{T}_{-1}((\theta,t)) = ((\theta - 2\pi t)\pmod{2\pi},t)$. Now we can define $T_{\alpha_n} T^{-1}_{\iota(\alpha_n)}$ as $\zeta \circ \mathcal{T} \circ \zeta^{-1}$ on $N$, $\iota\circ\zeta \circ \mathcal{T}_{-1} \circ (\iota\circ\zeta)^{-1}$ on $\iota(N)$, and the identity map on the remaining surface. This map is homeomorphism as $\zeta \circ \mathcal{T} \circ \zeta^{-1}$ and $\iota\circ\zeta \circ \mathcal{T}_{-1} \circ (\iota\circ\zeta)^{-1}$ are identity on the boundary of $N$ and $\iota(N)$, respectively.

\begin{figure}[htbp]
	\centering
		\begin{tikzpicture}
				
				\node[anchor=south west, inner sep=0] (image) at (0,0) {\includegraphics[width=0.65\linewidth]{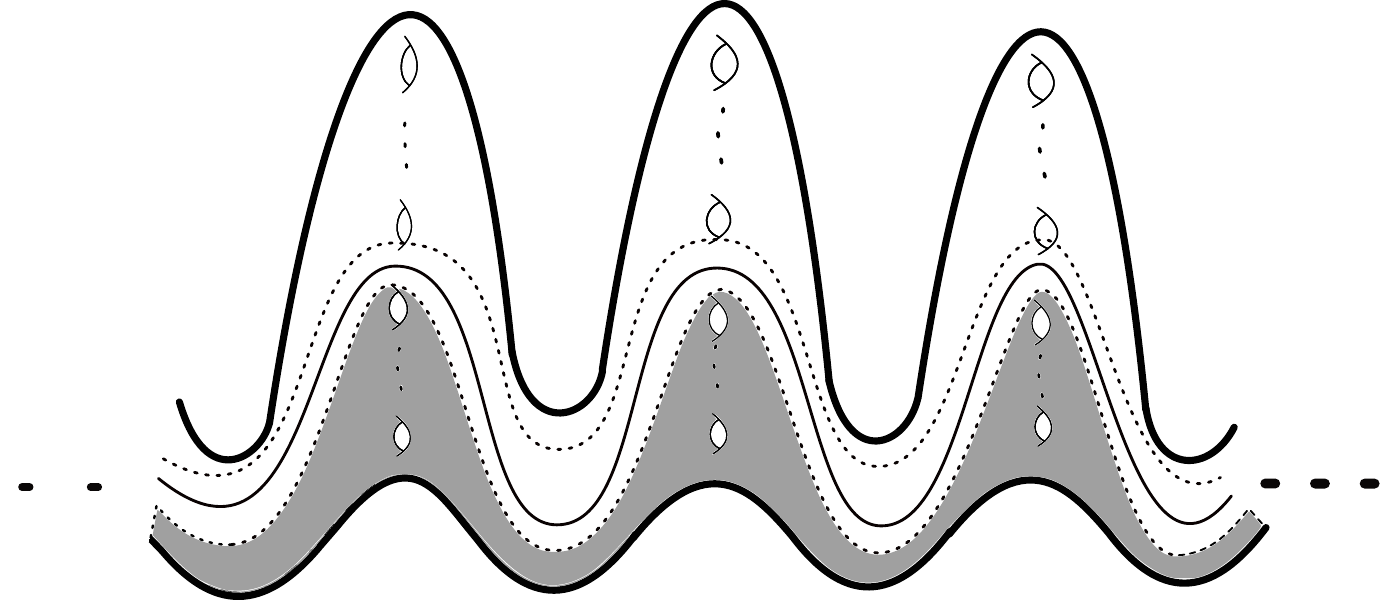}};
				
				\begin{scope}[x={(image.south east)}, y={(image.north west)}]
					
                    \small					
					\node[label=below:$\widetilde{F}_1$] at (0.485,0.35) {};
                    \node[label=below:$\widetilde{F}_2$] at (0.48,0.8) {};
					\node[label=below:$\widetilde{N}$] at (0.4,0.28) {};
                    \node[label=below:$\widetilde{\alpha}_n $] at (0.09,0.33) {};
				\end{scope}
			\end{tikzpicture}
			\caption{The subsurfaces $\widetilde{F}_1$ and $\widetilde{F}_2$ in $\mathcal{L}$.}
			\label{fig:shaded-L}
	\end{figure}

Let $\widetilde{\alpha}_n = q_g^{-1}(\alpha_n)$ and $\widetilde{\iota}(\widetilde{\alpha}_n) = q_g^{-1}(\iota(\alpha_n))$, which are infinite arcs on $\mathcal{L}$. Consider a closed neighborhood $\widetilde{N}=q_g^{-1}(N)$ and $\widetilde{\iota}(\widetilde{N})=q_g^{-1}(\iota(N))$ of $\widetilde{\alpha}_n$ and $\widetilde{\iota}(\widetilde{\alpha}_n)$, respectively. Note that $\mathcal{L}\setminus \widetilde{N}\cup \widetilde{\iota}(\widetilde{N})=\widetilde{F}_1 \sqcup \widetilde{F}_2$, where $\widetilde{F_1}$ is shaded part and $\widetilde{F_2}$ is a non shaded part shown in Figure~\ref{fig:shaded-L}. Consider parametrization $\widetilde{\zeta}:\mathbb{R}\times [0,1] \to \widetilde{N}$ such that for $\widetilde{\theta}:\mathbb{R}\times [0,1] \to [0,2\pi]\times [0,1]$ defined by $\widetilde{\theta}((\theta,t)) = (\theta \pmod{2\pi},t)$, we have $q_g \circ \widetilde{\zeta} = \zeta \circ \widetilde{\theta}$. Now define $\mathcal{T}': \mathbb{R}\times [0,1] \to \mathbb{R}\times [0,1]$ by $\mathcal{T}'((\theta,t)) = ((\theta + 2\pi t),t)$. Similarly, we can define $\iota\circ\widetilde{\zeta}:\mathbb{R}\times [0,1] \to \widetilde{\iota}(\widetilde{N})$ and $\mathcal{T}_{-1}': \mathbb{R}\times [0,1] \to \mathbb{R}\times [0,1]$ by $\mathcal{T}_{-1}'((\theta,t)) = ((\theta - 2\pi t),t)$. Now, we can define the lift of $T_{\alpha_n} T^{-1}_{\iota(\alpha_n)}$ on $\mathcal{L}$ by applying $\widetilde{\zeta} \circ \mathcal{T}' \circ \widetilde{\zeta}^{-1}$ on $\widetilde{N}$, $\iota\circ\widetilde{\zeta} \circ \mathcal{T}_{-1}' \circ (\iota\circ\widetilde{\zeta})^{-1}$ on $\widetilde{\iota}(\widetilde{N})$. Note that $\widetilde{\zeta} \circ \mathcal{T}' \circ \widetilde{\zeta}^{-1}$ and $\iota\circ\widetilde{\zeta} \circ \mathcal{T}_{-1}' \circ (\iota\circ\widetilde{\zeta})^{-1}$ is identity on boundary of $\widetilde{F}_2$ and shift by $2\pi$ on boundary of $\widetilde{F}_1$. Now, apply the $\mathbb{Z}$-action on $\widetilde{F}_1$ and define identity on $\widetilde{F}_2$. It is easy to check that the map defined is a lift of $T_{\alpha_n} T^{-1}_{\iota(\alpha_n)}$.

We summarize the discussion in Sections~\ref{subsec:dtwist}--\ref{subsec:bpair} in the following proposition.

\begin{prop}\label{prop:gen-lift}
 Consider the regular infinite-sheeted cover $q_g:\mathcal{L}\to S_g$ induced by an $\langle h_{g-1}\rangle$-action on $\mathcal{L}$. The following hold:
 \begin{enumerate}[(i)]
 \item A lift of $\iota$ in $\mathrm{Map}(\mathcal{L})$ is represented by $\widetilde{\iota}:\mathcal{L}\to \mathcal{L}$, as described in the preceding discussion. 
 \item A lift of bounding pair $T_{\alpha_n} T^{-1}_{\iota(\alpha_n)} \in \mathrm{LMod}_{q_g}(S_g)$ is represented by $\mathcal{T}_{\widetilde{\alpha}_n} \mathcal{T}'_{\widetilde{\iota}(\widetilde{\alpha}_n)}$, as described in the preceding discussion.
 \item Let $\gamma$ be an essential simple closed curve in $S_g$. If $T_{\gamma}\in \mathrm{LMod}_{q_g}(S_g)$ and $q_g^{-1}(\gamma)=\{\gamma_i\ : \ i\in\mathbb{Z}\}$, then \[\widetilde{T}_\gamma =\lim_{k\to\infty}\prod_{i=-k}^k T_{\gamma_i}.\]  
\end{enumerate} 
\end{prop}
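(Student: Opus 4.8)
The plan is to treat parts (i) and (ii) as verifications that the explicit homeomorphisms constructed in the preceding discussion genuinely descend to the named mapping classes, and to reserve the analytic content for part (iii), which asserts the convergence of an infinite product in the Polish group $\mathrm{Map}(\mathcal{L})$.

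For (i), I would check that the $\pi$-rotation $\widetilde{\iota}$ of $\mathcal{L}$ about the indicated axis normalizes the deck group, namely $\widetilde{\iota}\, h_{g-1}\, \widetilde{\iota}^{-1} = h_{g-1}^{-1}$, since the rotation reverses the direction of the handle shift along the ladder. This normalization guarantees that $\widetilde{\iota}$ descends to a homeomorphism of $S_g = \mathcal{L}/\langle h_{g-1}\rangle$, and comparing the induced action on a fundamental domain identifies the descended map with $\iota$. For (ii), the same strategy applies to the shear map $\mathcal{T}_{\widetilde{\alpha}_n}\mathcal{T}^{-1}_{\widetilde{\iota}(\widetilde{\alpha}_n)}$. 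Choosing the homeomorphism $\zeta$ so that $h_{g-1}$ corresponds to the unit translation of the $\mathbb{R}$-factor of $\mathbb{R}\times[0,1]$, one has $\mathcal{T}(\theta+1,t)=\mathcal{T}(\theta,t)+(1,0)$, so $\mathcal{T}_{\widetilde{\alpha}_n}$ commutes with $h_{g-1}$; likewise for the factor along $\widetilde{\iota}(\widetilde{\alpha}_n)$. Hence the composition descends, and its restriction to a fundamental domain recovers exactly one positive twist along $\alpha_n$ and one negative twist along $\iota(\alpha_n)$, that is, $T_{\alpha_n}T^{-1}_{\iota(\alpha_n)}$.

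The substantive part is (iii). First I observe that the curves $\gamma_i = h_{g-1}^i(\gamma_0)$ comprising $q_g^{-1}(\gamma)$ are pairwise disjoint, so the Dehn twists $T_{\gamma_i}$ pairwise commute and the order of multiplication in the partial products $P_k = \prod_{i=-k}^k T_{\gamma_i}$ is immaterial. Fixing the compact exhaustion $(C_n)$ and the enumeration $\{c_n\}$ used to define the metric $\rho$, I would argue that since the $\gamma_i$ escape to the two non-planar ends of $\mathcal{L}$ as $|i|\to\infty$, each fixed curve $c_j$ meets only finitely many of the $\gamma_i$; hence $P_k(c_j)$ stabilizes once $k$ is large enough that every twist affecting $c_j$ has been included. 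The same holds for $P_k^{-1} = \prod_{i=-k}^k T_{\gamma_i}^{-1}$, whose supports coincide with those of $P_k$. Consequently $(P_k)$ is $\rho$-Cauchy, and by completeness of $(\mathrm{Map}(\mathcal{L}),\rho)$ the limit $\widetilde{T}_\gamma$ exists.

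It remains to identify $\widetilde{T}_\gamma$ as a lift of $T_\gamma$. Since $h_{g-1}(\gamma_i)=\gamma_{i+1}$ yields $h_{g-1}\,T_{\gamma_i}\,h_{g-1}^{-1}=T_{\gamma_{i+1}}$, conjugation by $h_{g-1}$ merely reindexes the factors of the doubly infinite product and leaves the limit invariant; thus $\widetilde{T}_\gamma$ commutes with $h_{g-1}$ and descends to a homeomorphism of $S_g$. Choosing the twisting annuli disjointly so that exactly one, along $\gamma_0$, meets a fixed fundamental domain, the descended map is $T_\gamma$, as required. The main obstacle I anticipate is making rigorous the claim that each $c_j$ meets only finitely many $\gamma_i$; this is precisely where one must invoke the specific geometry of the standard handle shift, namely that $h_{g-1}$ translates along the ladder and therefore pushes $\gamma_0$ out of any fixed compact subsurface after finitely many iterates in either direction.
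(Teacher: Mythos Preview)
Your proposal is correct and follows essentially the same approach as the paper: parts (i) and (ii) are dispatched as direct verifications from the preceding constructions, and for (iii) the core argument is that each curve $c_j$ meets only finitely many of the $\gamma_i$, so the partial products $P_k$ stabilize on $c_j$ and form a $\rho$-Cauchy sequence in the complete space $\mathrm{Map}(\mathcal{L})$. The only cosmetic difference is that the paper starts from an already-existing lift $\widetilde{T}_\gamma$ (guaranteed by $T_\gamma\in\mathrm{LMod}_{q_g}(S_g)$) and shows $\rho(\widetilde{T}_\gamma,P_k)\to 0$ directly, whereas you first establish the limit abstractly and then identify it as a lift via commutation with $h_{g-1}$.
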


\begin{proof}
Statements $(i)$ and $(ii)$ follow from the discussion in Sections~\ref{subsec:dtwist}--\ref{subsec:bpair}. To prove $(iii)$, we consider a compact exhaustion $(C_n)_{n\ge 1}$ of $\mathcal{L}$, an enumeration $(c_n)_{n\ge 1}$ of the curves in $C(\mathcal{L})$ and a metric $\rho$, following Subsection~\ref{subsec:liftability}. It is straightforward to show that the limit exists in the complete metric space $\left(\text{Map}(\mathcal{L}),\rho\right)$. It remains to be shown that this limit is indeed the lift. The curves $c_1,\dots,c_k$ intersect only finitely many preimages of $\gamma$ under $q_g$. Consider a $K(k)$ such that $c_j\cap \gamma_i=\emptyset$, for $|i|>K(k)+1$. Then for $|I|\ge K(k)$, we have:
\[
\widetilde{T}_\gamma(c_j)=\prod_{i=-I}^{I} T_{\gamma_i}(c_j), \text{ for } j=1,\dots,k\ \Rightarrow \ d\left(\widetilde{T}_\gamma,\prod_{i=-I}^{I} T_{\gamma_i}\right)\leq 2^{-k}.
\] 
By a similar argument, we obtain $K'(k)$ such that for $|I|\ge K'(k)$,
\[
d\left(\widetilde{T}_\gamma^{-1},\left(\prod_{i=-I}^{I} T_{\gamma_i}\right)^{-1}\right)\leq 2^{-k}.
\] 
Hence, for $|I|\ge \max\{K(k),K'(k)\}$, we get:
\[
\rho\left(\widetilde{T}_\gamma,\prod_{i=-I}^{I} T_{\gamma_i}\right)\leq 2^{-k+1},
\]
and we have our assertion.
\end{proof}

A direct consequence of Proposition~\ref{prop:gen-lift} is the following corollary. 

 \begin{cor}\label{cor:seq}
 	Let $\gamma_1,\dots,\gamma_r$ be essential simple closed curves in $S_g$. Suppose that $T_{\gamma_j}\in \mathrm{LMod}_{q_g}(S_g)$ and
    \[
    q_g^{-1}(\gamma_j)=\{\gamma_{j,i}:i\in\mathbb{Z}\}, \qquad j=1,\dots,r.
    \]
    Let $F=T_{\gamma_1}^{n_1}\circ \dots\circ T_{\gamma_r}^{n_r}$, where $n_i \in \mathbb{Z}$, and suppose that $F$ is represented by a homeomorphism $f$ fixing a point $x_0 \in S_g$. Then
 	\[
 	\lim_{k\to\infty}\left(\prod_{i=-k}^k T_{\gamma_{1,i}^{n_1}}\right)\circ \dots\circ\left( \prod_{i=-k}^k T_{\gamma_{r,i}}^{n_r}\right)
 	\]
exists and equals the lift $\widetilde{F} \in \mathrm{Map}(\mathcal{L})$ that is represented by a $\widetilde{f}$ that fixes the point $\widetilde{x}_0 \in q_g^{-1}(x_0) \in \mathcal{L}$. 
\end{cor}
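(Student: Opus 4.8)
The plan is to reduce the corollary to a finite iteration of Proposition~\ref{prop:gen-lift}(iii), using the fact that lifting is a group homomorphism on the relevant subgroup of the mapping class group, together with continuity of multiplication in the Polish group $(\mathrm{Map}(\mathcal{L}),\rho)$. First I would observe that, by Proposition~\ref{prop:gen-lift}(iii), for each fixed $j$ the limit
\[
\widetilde{T}_{\gamma_j}=\lim_{k\to\infty}\prod_{i=-k}^k T_{\gamma_{j,i}}
\]
exists in $\mathrm{Map}(\mathcal{L})$ and is the lift of $T_{\gamma_j}$. Since a power of a Dehn twist lifts to the corresponding power of its lift, raising to the $n_j$-th power and using that $\left(\prod_{i=-k}^k T_{\gamma_{j,i}}\right)^{n_j}$ and $\prod_{i=-k}^k T_{\gamma_{j,i}}^{n_j}$ have the same limit (the twists $T_{\gamma_{j,i}}$ for distinct $i$ have disjoint supports and hence commute, so these two finite products agree for each fixed $k$), I would conclude that $\lim_{k\to\infty}\prod_{i=-k}^k T_{\gamma_{j,i}}^{n_j}=\widetilde{T}_{\gamma_j}^{\,n_j}$.

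Next I would invoke continuity of composition in $(\mathrm{Map}(\mathcal{L}),\rho)$. Because $\rho$ induces the Polish group topology, multiplication $\mathrm{Map}(\mathcal{L})\times\mathrm{Map}(\mathcal{L})\to\mathrm{Map}(\mathcal{L})$ is jointly continuous, so the composite of the $r$ convergent sequences converges to the composite of the limits:
\[
\lim_{k\to\infty}\left(\prod_{i=-k}^k T_{\gamma_{1,i}}^{n_1}\right)\circ\cdots\circ\left(\prod_{i=-k}^k T_{\gamma_{r,i}}^{n_r}\right)
=\widetilde{T}_{\gamma_1}^{\,n_1}\circ\cdots\circ\widetilde{T}_{\gamma_r}^{\,n_r}.
\]
Since each $\widetilde{T}_{\gamma_j}^{\,n_j}$ is the lift of $T_{\gamma_j}^{n_j}$, and lifting respects composition (a lift of a composite is the composite of the lifts once basepoints are compatibly chosen), the right-hand side is a lift $\widetilde{F}$ of $F=T_{\gamma_1}^{n_1}\circ\cdots\circ T_{\gamma_r}^{n_r}$.

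Finally I would pin down which lift this is by tracking the basepoint. Recall from the discussion opening Section~\ref{sec:mapping-class} that $f$ fixes $x_0\in S_g$ and that there is a unique lift $\widetilde{f}$ with $\widetilde{f}(\widetilde{x}_0)=\widetilde{x}_0$ for a chosen $\widetilde{x}_0\in q_g^{-1}(x_0)$. I would argue that the lift produced above fixes $\widetilde{x}_0$: each finite product in the sequence is supported away from a neighborhood of $\widetilde{x}_0$ once $k$ is large (the curves $\gamma_{j,i}$ near $\widetilde{x}_0$ are finite in number and, after applying the earlier-indexed twists, the image of $\widetilde{x}_0$ stabilizes), so the limiting homeomorphism fixes $\widetilde{x}_0$; by the uniqueness of the basepoint-fixing lift this identifies the limit with $\widetilde{F}$ represented by the $\widetilde{f}$ fixing $\widetilde{x}_0$. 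I expect the main obstacle to be this last step: making rigorous the claim that composing the infinite products in the stated nested order yields exactly the basepoint-preserving lift rather than some $h_{g-1}$-translate of it. The cleanest route is to note that each partial product $\prod_{i=-k}^{k}T_{\gamma_{j,i}}^{n_j}$ is itself a lift (it is a finite composition of deck-transformation-equivariant twists) that fixes $\widetilde{x}_0$ whenever none of the curves $\gamma_{j,i}$ passes through $\widetilde{x}_0$, and this basepoint condition is preserved under the limit, so uniqueness of the fixing lift closes the argument.
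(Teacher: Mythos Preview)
The paper gives no proof of this corollary; it merely asserts that it is ``a direct consequence of Proposition~\ref{prop:gen-lift}.'' Your approach---apply Proposition~\ref{prop:gen-lift}(iii) to each factor, use that the $T_{\gamma_{j,i}}$ for distinct $i$ commute to handle powers, and then pass to the composite limit---is exactly what the paper intends, and it is correct. One could equally well bypass the appeal to abstract continuity of multiplication and argue directly with the metric $\rho$, just as in the proof of Proposition~\ref{prop:gen-lift}(iii): any finite list $c_1,\dots,c_k\in C(\mathcal{L})$ meets only finitely many of the $\gamma_{j,i}$, so for large $K$ the entire composite of partial products already agrees with $\widetilde{T}_{\gamma_1}^{\,n_1}\circ\cdots\circ\widetilde{T}_{\gamma_r}^{\,n_r}$ on $c_1,\dots,c_k$.

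Two remarks on your final paragraph. First, the sentence ``each partial product $\prod_{i=-k}^{k}T_{\gamma_{j,i}}^{n_j}$ is itself a lift'' is not correct: these finite products are not fibre-preserving for $q_g$, so they are not lifts of anything; only the limit is. Fortunately your argument does not actually need them to be lifts---you only need that they fix $\widetilde{x}_0$ pointwise, and that survives. Second, the basepoint identification you flag as the ``main obstacle'' is genuine (the paper does not address it), but it has a clean resolution: since Dehn twist supports can be taken to avoid any prescribed point, one may assume $x_0$ lies outside the annular support of every $T_{\gamma_j}$. Then $\widetilde{x}_0$ lies outside the support of every $T_{\gamma_{j,i}}$, so every partial product, and hence the limit, fixes $\widetilde{x}_0$. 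Uniqueness of the basepoint-preserving lift then finishes the argument as you say.
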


\subsection{Commutativity of lifts with the handle shift map}

We begin this section with the following technical algebraic assertion. 

\begin{lemma}\label{lemma:commlem}
Let $G$ be the generating set for $\mathrm{LMod}_{q_g}(S_g)$ as given in Theorem~\ref{thm:deygen}. For $1\leq j \leq n$, let $F_j \in G$ be represented by $f_j$ such that $F = F_1F_2\ldots F_n$ is represented by $f = f_1f_2\ldots f_n$. Consider a lift $\widetilde{f}=\widetilde{f}_1\widetilde{f}_2\ldots \widetilde{f}_n$ of $f$ on $\mathcal{L}$, where $\widetilde{f}_j$ is a lift of $f_j$ on $\mathcal{L}$. Then one of the following holds. 
\begin{enumerate}[(i)]
\item $h_{g-1}\circ\widetilde{f}=\widetilde{f}\circ h_{g-1}$ if and only if there are even number of $j's$ such that $f_j$ is the hyperelliptic involution $\iota$. 
\item $h_{g-1}\circ\widetilde{f}=\widetilde{f}\circ h_{g-1}^{-1}$ if and only if there are odd number of $j's$ such that $f_j$ is the hyperelliptic involution $\iota$. 
\end{enumerate}
\end{lemma}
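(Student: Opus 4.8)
The plan is to reduce both biconditionals to a single conjugation relation between $h_{g-1}$ and each lifted generator, and then to track the induced sign through the product $\widetilde f=\widetilde f_1\cdots\widetilde f_n$. The starting point is that \emph{every} lift normalizes the deck group. Indeed, since $q_g\circ\widetilde f_j=f_j\circ q_g$, for any deck transformation $d$ the map $\widetilde f_j\circ d\circ\widetilde f_j^{-1}$ again descends to the identity on $S_g$ and is therefore itself a deck transformation; as $\langle h_{g-1}\rangle\cong\mathbb Z$ admits only the identity and inversion as automorphisms, we get $\widetilde f_j\circ h_{g-1}\circ\widetilde f_j^{-1}=h_{g-1}^{\epsilon_j}$ for a well-defined $\epsilon_j\in\{+1,-1\}$, equivalently $h_{g-1}\circ\widetilde f_j=\widetilde f_j\circ h_{g-1}^{\epsilon_j}$. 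Thus the whole statement comes down to computing $\epsilon_j$ for each of the three types of generators in $G$, and in particular to verifying that $\epsilon_j=-1$ precisely when $f_j$ is the generator $\iota$ (and \emph{not} merely because $\iota$ appears inside a bounding-pair generator).

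For the Dehn-twist generators I would use Proposition~\ref{prop:gen-lift}(iii): writing $\widetilde T_\gamma=\lim_{k\to\infty}\prod_{i=-k}^{k}T_{\gamma_i}$ with $\gamma_i=h_{g-1}^i(\gamma_0)$, the deck relation $h_{g-1}\circ T_{\gamma_i}\circ h_{g-1}^{-1}=T_{h_{g-1}(\gamma_i)}=T_{\gamma_{i+1}}$ lets me conjugate each partial product and re-index; since conjugation by the fixed homeomorphism $h_{g-1}$ is continuous in the metric $\rho$, the limits agree and $h_{g-1}\circ\widetilde T_\gamma\circ h_{g-1}^{-1}=\widetilde T_\gamma$, so $\epsilon=+1$. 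For a bounding-pair generator the lift is $\mathcal T_{\widetilde\alpha_n}\circ\mathcal T_{\widetilde\iota(\widetilde\alpha_n)}^{-1}$, built from partial twists supported along the infinite curves $\widetilde\alpha_n=q_g^{-1}(\alpha_n)$ and $\widetilde\iota(\widetilde\alpha_n)=q_g^{-1}(\iota(\alpha_n))$; these are full preimages of single curves, hence setwise invariant under $h_{g-1}$, along which $h_{g-1}$ acts by a translation that commutes with the shear $\mathcal T$ in the coordinates $\zeta$, again giving $\epsilon=+1$. Finally, for the hyperelliptic involution the lift $\widetilde\iota$ is the $\pi$-rotation of $\mathcal L$ about the transverse axis of Figure~\ref{fig:lift-i-L}; this rotation interchanges the two non-planar ends of $\mathcal L$ and thereby reverses the direction of the $\langle h_{g-1}\rangle$-translation, so that $\widetilde\iota\circ h_{g-1}\circ\widetilde\iota^{-1}=h_{g-1}^{-1}$ and $\epsilon=-1$ (consistent with $\widetilde\iota$ swapping $\widetilde x_i\leftrightarrow\widetilde x_{-i}$).

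With these values in hand I would push $h_{g-1}$ across the product one factor at a time. From $h_{g-1}\circ\widetilde f_j=\widetilde f_j\circ h_{g-1}^{\epsilon_j}$ one checks that $h_{g-1}^{\delta}\circ\widetilde f_j=\widetilde f_j\circ h_{g-1}^{\delta\epsilon_j}$ for either sign $\delta$, so an induction on the number of factors yields
\[ h_{g-1}\circ\widetilde f=\widetilde f\circ h_{g-1}^{\,\epsilon_1\cdots\epsilon_n}=\widetilde f\circ h_{g-1}^{\,(-1)^m}, \]
where $m=\#\{\,j:f_j=\iota\,\}$, since $\epsilon_j=-1$ exactly when $f_j=\iota$ and $\epsilon_j=+1$ otherwise. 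When $m$ is even this is the relation in $(i)$ and when $m$ is odd it is the relation in $(ii)$, which proves both ``if'' directions at once. The converse directions are immediate from the fact that $h_{g-1}$ has infinite order, so $h_{g-1}\ne h_{g-1}^{-1}$: the relations in $(i)$ and $(ii)$ cannot hold simultaneously, whence commutation forces $m$ even and the $h_{g-1}^{-1}$-relation forces $m$ odd.

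The main obstacle is the hyperelliptic case in the middle step: it is the only place where genuinely geometric input is required, namely the identification of the lift of $\iota$ as an end-swapping $\pi$-rotation, which is what pins down $\epsilon=-1$ rather than $+1$. A secondary technical point is justifying the interchange of conjugation with the limit defining $\widetilde T_\gamma$, which I would handle using the completeness of $(\mathrm{Map}(\mathcal L),\rho)$ and the continuity of the map $X\mapsto h_{g-1}\circ X\circ h_{g-1}^{-1}$ already implicit in Subsection~\ref{subsec:liftability}.
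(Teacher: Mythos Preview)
Your proof is correct and follows essentially the same approach as the paper: determine the conjugation sign $\epsilon_j\in\{\pm1\}$ for each generator lift (Dehn twists and bounding pairs give $+1$, the hyperelliptic involution gives $-1$), then push $h_{g-1}$ through the product to obtain $h_{g-1}\circ\widetilde f=\widetilde f\circ h_{g-1}^{(-1)^m}$. The paper's proof simply asserts these commutation relations as observations, whereas you supply more detailed justifications (the normalizing-the-deck-group argument, the limit/continuity argument for $\widetilde T_\gamma$, and the end-swapping description of $\widetilde\iota$); these additions are sound and do not change the structure of the argument.
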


\begin{proof}
First, we observe that any lift of a Dehn twist or a bounding pair commutes with $h_{g-1}$, and any lift $\widetilde{\iota}$ of $\iota$ satisfies $h_{g-1}\circ\widetilde{\iota}\circ h_{g-1}=\widetilde{\iota}$. Thus, if $F_j\in G \setminus\{\iota\}$, then $\widetilde{f}_j$  satisfies $h_{g-1}\circ\widetilde{f}_j=\widetilde{f}_j\circ h_{g-1}$. Furthermore, when $f_j=\iota$, we have $h_{g-1}\circ\widetilde{f}_j=\widetilde{f}_j\circ h_{g-1}^{-1}$. Thus, we have \[h_{g-1}\circ \widetilde{f}=h_{g-1}\circ\widetilde{f}_1\widetilde{f}_2\ldots \widetilde{f}_n=\widetilde{f}_1\circ h_{g-1}^{\delta_1}\circ\widetilde{f}_2\ldots \widetilde{f}_n=\cdots=\widetilde{f}_1\widetilde{f}_2\ldots \widetilde{f}_n\circ h_{g-1}^{\delta_1\ldots\delta_n}=\widetilde{f}\circ h_{g-1}^{\delta_1\ldots\delta_n},\]
where the $\delta_j = -1$ (resp. $1$) according as $\widetilde{f}_j = \iota$ (resp. $\neq \iota$). Therefore, it follows that \[h_{g-1}\circ \widetilde{f}_j=\widetilde{f}_j\circ h_{g-1}^{(-1)^k},\] where $k$ is the number of times $\iota$ appears in the product $\widetilde{f}_1\widetilde{f}_2\ldots \widetilde{f}_n$. 

Since any representation of $f$ as a product of elements in $G$, the number of times $\iota$ appears is either always even or always odd, our assertion will now follow from the preceding argument.
\end{proof}

An immediate consequence of Lemma~\ref{lemma:commlem} is the following proposition.

  \begin{prop}
         Let $F\in\mathrm{LMod}_{q_g}(S_g)$ be represented by an $f$ that lifts to a homeomorphism $\widetilde{f}$ on $\mathcal{L}$. Then one of the following possibilities occurs:
         \begin{enumerate}[(i)]
             \item $h_{g-1}\circ\widetilde{f}\circ h_{g-1}^{-1}=\widetilde{f}.$
             \item $h_{g-1}\circ\widetilde{f}\circ h_{g-1}=\widetilde{f}.$
            \end{enumerate}
    \end{prop}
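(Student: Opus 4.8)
The plan is to derive this proposition as a direct consequence of Lemma~\ref{lemma:commlem}. First I would invoke the generating set $G$ from Theorem~\ref{thm:deygen}: since $F \in \mathrm{LMod}_{q_g}(S_g)$, the representative $f$ can be written as a product $f = f_1 f_2 \ldots f_n$ of representatives of elements of $G$. The given lift $\widetilde{f}$ then decomposes correspondingly as $\widetilde{f} = \widetilde{f}_1 \widetilde{f}_2 \ldots \widetilde{f}_n$, where each $\widetilde{f}_j$ is a lift of $f_j$. This places us exactly in the hypotheses of Lemma~\ref{lemma:commlem}, so the commutation relation is governed entirely by the parity of the number of times the hyperelliptic involution $\iota$ appears in the product.

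Next I would apply the dichotomy of Lemma~\ref{lemma:commlem} directly. Letting $k$ denote the number of indices $j$ for which $f_j = \iota$, the lemma tells us that $h_{g-1} \circ \widetilde{f} = \widetilde{f} \circ h_{g-1}^{(-1)^k}$. If $k$ is even, then $(-1)^k = 1$, giving $h_{g-1} \circ \widetilde{f} = \widetilde{f} \circ h_{g-1}$, which rearranges to case $(i)$, namely $h_{g-1} \circ \widetilde{f} \circ h_{g-1}^{-1} = \widetilde{f}$. If $k$ is odd, then $(-1)^k = -1$, giving $h_{g-1} \circ \widetilde{f} = \widetilde{f} \circ h_{g-1}^{-1}$; composing on the right with $h_{g-1}$ yields $h_{g-1} \circ \widetilde{f} \circ h_{g-1} = \widetilde{f}$, which is case $(ii)$. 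Thus exactly one of the two stated possibilities occurs, according to the parity of $k$.

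The one subtlety worth flagging is well-definedness: a mapping class $F$ admits many factorizations into generators, and a priori the parity of the number of $\iota$-factors could depend on the chosen factorization. However, Lemma~\ref{lemma:commlem} already resolves this, since its final sentence asserts that in any representation of $f$ as a product of generators, the number of occurrences of $\iota$ is either always even or always odd. Hence the resulting commutation relation is an intrinsic property of the lift $\widetilde{f}$ rather than an artifact of the factorization. I expect this parity-invariance point to be the only genuine obstacle, and it is already handled upstream in the lemma; the remainder of the argument is a routine rewriting of the two cases into the form stated in the proposition.

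\begin{proof}
Since $F \in \mathrm{LMod}_{q_g}(S_g)$, by Theorem~\ref{thm:deygen} the representative $f$ factors as $f = f_1 f_2 \ldots f_n$, where each $f_j$ represents a generator in $G$, and the lift decomposes as $\widetilde{f} = \widetilde{f}_1 \widetilde{f}_2 \ldots \widetilde{f}_n$ with $\widetilde{f}_j$ a lift of $f_j$. Applying Lemma~\ref{lemma:commlem}, if the number of indices $j$ with $f_j = \iota$ is even, then possibility $(i)$ holds, namely $h_{g-1} \circ \widetilde{f} \circ h_{g-1}^{-1} = \widetilde{f}$; if this number is odd, then $h_{g-1} \circ \widetilde{f} = \widetilde{f} \circ h_{g-1}^{-1}$, and composing on the right with $h_{g-1}$ gives possibility $(ii)$, namely $h_{g-1} \circ \widetilde{f} \circ h_{g-1} = \widetilde{f}$. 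By the concluding assertion of Lemma~\ref{lemma:commlem}, the relevant parity is independent of the chosen factorization, so exactly one of the two possibilities occurs.
\end{proof}
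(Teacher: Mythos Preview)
Your proof is correct and follows exactly the route the paper intends: the paper states the proposition as ``an immediate consequence of Lemma~\ref{lemma:commlem}'' without further argument, and your proof simply unpacks that immediate consequence by invoking the generating set, decomposing the lift accordingly, and reading off the two cases from the parity dichotomy in the lemma. Your observation that the parity is factorization-independent (via the last sentence of Lemma~\ref{lemma:commlem}) is a nice touch that makes the well-definedness explicit.
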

    
\begin{cor}
\label{cor:forf2}
Let $F\in\mathrm{LMod}_{q_g}(S_g)$ be represented by an $f$ that lifts to a homeomorphism $\widetilde{f}$ on $\mathcal{L}$. If $\widetilde{f}$ satisfies $h_{g-1}\circ\widetilde{f}\circ h_{g-1}=\widetilde{f}$, then the lift $\widetilde{f}^2$ of $f^2$ satisfies $h_{g-1}\circ\widetilde{f}^2\circ h_{g-1}^{-1}=\widetilde{f}^2$. 
\end{cor}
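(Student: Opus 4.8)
The plan is to derive Corollary~\ref{cor:forf2} directly from the proposition immediately preceding it, using only the hypothesis that $\widetilde{f}$ satisfies the ``anti-commuting'' relation $h_{g-1}\circ\widetilde{f}\circ h_{g-1}=\widetilde{f}$, which is equivalent to $h_{g-1}\circ\widetilde{f} = \widetilde{f}\circ h_{g-1}^{-1}$. The key observation is that this relation says precisely that conjugating $\widetilde{f}$ by $h_{g-1}$ inverts the handle shift on the right: $h_{g-1}\circ\widetilde{f}\circ h_{g-1}^{-1} = \widetilde{f}\circ h_{g-1}^{-2}$, though it is cleaner to work with the form $h_{g-1}\circ\widetilde{f}=\widetilde{f}\circ h_{g-1}^{-1}$ and push $h_{g-1}$ through $\widetilde{f}^2$ one factor at a time.

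First I would record the hypothesis in the usable form $h_{g-1}\circ\widetilde{f}=\widetilde{f}\circ h_{g-1}^{-1}$. Then I would compute $h_{g-1}\circ\widetilde{f}^2$ by applying this relation twice:
\[
h_{g-1}\circ\widetilde{f}\circ\widetilde{f}
= \widetilde{f}\circ h_{g-1}^{-1}\circ\widetilde{f}.
\]
The middle step is to commute $h_{g-1}^{-1}$ past the second copy of $\widetilde{f}$. From $h_{g-1}\circ\widetilde{f}=\widetilde{f}\circ h_{g-1}^{-1}$ one obtains, by taking inverses of both sides or by rearranging, the companion relation $h_{g-1}^{-1}\circ\widetilde{f}=\widetilde{f}\circ h_{g-1}$. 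Substituting this gives
\[
\widetilde{f}\circ h_{g-1}^{-1}\circ\widetilde{f}
= \widetilde{f}\circ\widetilde{f}\circ h_{g-1}
= \widetilde{f}^2\circ h_{g-1}.
\]
Thus $h_{g-1}\circ\widetilde{f}^2 = \widetilde{f}^2\circ h_{g-1}$, which is exactly the desired conclusion $h_{g-1}\circ\widetilde{f}^2\circ h_{g-1}^{-1}=\widetilde{f}^2$.

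The only subtle point, and the step I expect to require a moment of care, is justifying the companion relation $h_{g-1}^{-1}\circ\widetilde{f}=\widetilde{f}\circ h_{g-1}$. This follows formally: starting from $h_{g-1}\circ\widetilde{f}=\widetilde{f}\circ h_{g-1}^{-1}$, compose on the left and right by $h_{g-1}^{-1}$ to get $\widetilde{f}\circ h_{g-1}^{-1} = h_{g-1}^{-1}\circ\widetilde{f}\circ h_{g-1}^{-1}$, then compose on the right by $h_{g-1}$ to obtain $\widetilde{f} = h_{g-1}^{-1}\circ\widetilde{f}\circ h_{g-1}^{-1}\circ h_{g-1}$; rearranging yields the claim. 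Since this is a purely algebraic manipulation in the group $\mathrm{Homeo}^+(\mathcal{L})$, there is no genuine obstacle—the result is a one-line corollary of the parity bookkeeping already established in Lemma~\ref{lemma:commlem}, where squaring $\widetilde{f}$ doubles the number of occurrences of the hyperelliptic involution $\iota$, thereby forcing the exponent $(-1)^{2k}=1$ and landing in the genuine centralizer.
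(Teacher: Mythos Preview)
Your proof is correct and matches the paper's approach: the corollary is stated without proof there, being an immediate consequence of the preceding proposition (equivalently, of the parity count in Lemma~\ref{lemma:commlem}), exactly as you indicate. Your direct algebraic push-through of $h_{g-1}$ past the two factors of $\widetilde{f}$ is the natural one-line argument; the only blemish is that your written derivation of the companion relation $h_{g-1}^{-1}\circ\widetilde{f}=\widetilde{f}\circ h_{g-1}$ is slightly garbled (the intermediate identity you state does not follow from ``compose on the left and right by $h_{g-1}^{-1}$''), but the relation itself is immediate from left-multiplying $h_{g-1}\circ\widetilde{f}=\widetilde{f}\circ h_{g-1}^{-1}$ by $h_{g-1}^{-1}$ and then right-multiplying by $h_{g-1}$.
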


Since the dynamical properties of our interest are shared by $\widetilde{f}$ and $\widetilde{f}^2$, it would not alter our analysis if we choose to consider one in place of the other. Hence, we will assume that the lift $\widetilde{f}$ commutes with $h_{g-1}$.
 
\section{\texorpdfstring{Dynamics of lifts of pseudo-Anosov maps on $S_g$ to the ladder surface}{Dynamics of lifts of pseudo-Anosov maps on Sg to the ladder surface}}

\subsection{\texorpdfstring{Lifting the symbolic coding from $S_g$ to $\mathcal{L}$}{Lifting the symbolic coding from Sg to the ladder surface}}\label{subsec:symcodingsec}

We will consider a countably infinite collection of rectangles in our definition of a Markov partition (see Definition~\ref{def:MP}). By lifting the Markov partition from $(S_g, f)$ to $(\mathcal{L}, \widetilde{f})$, we obtain a countable Markov partition of $\mathcal{L}$, which results in a countable Markov shift $(\widetilde{\Sigma}_{\widetilde{A}}, \sigma)$, where $\widetilde{A}$ is a countable $0-1$ matrix. This allows us to establish a semi-conjugacy from the Markov shift $(\widetilde{\Sigma}_{\widetilde{A}}, \sigma)$ to the dynamical system $(\mathcal{L}, \widetilde{f})$. Specifically, we produce a map $\widetilde{\pi}: \widetilde{\Sigma}_{\widetilde{A}} \to \mathcal{L}$ which is a surjective continuous and finite-to-one map. We present the following theorem which we will prove using a series of four lemmas.

\begin{theorem}\label{thm:conj}
Consider a pseudo-Anosov homeomorphism $f:S_g\rightarrow S_g$ ($g\ge 3$) and its lift $\widetilde{f}:\mathcal{L}\rightarrow\mathcal{L}$ under the regular-sheeted cover $q_g:\mathcal{L}\rightarrow S_g$. Then there exists a countable Markov partition $\widetilde{\mathcal{R}}$ for $(\mathcal{L},\widetilde{f})$. Consequently, there exists a countable Markov shift $(\widetilde{\Sigma}_{\widetilde{A}}, \sigma)$ such that the dynamical system $(\widetilde{\Sigma}_{\widetilde{A}}, \sigma)$ is semi-conjugate to $(\mathcal{L}, \widetilde{f})$, that is, there exists a surjective continuous map $\widetilde{\pi}: \widetilde{\Sigma}_{\widetilde{A}} \to\mathcal{L}$ such that the following diagram commutes:
    \[
     \xymatrix{
            & \widetilde{\Sigma}_{\widetilde{A}} \ar_{\widetilde{\pi}}[d] \ar^\sigma[r] & \widetilde{\Sigma}_{\widetilde{A}} \ar^{\widetilde{\pi}}[d] \\
            & \mathcal{L} \ar^{\widetilde{f}}[r] & \mathcal{L}.}
    \]
Furthermore, $\widetilde{\pi}$ is a finite-to-one map.
\end{theorem}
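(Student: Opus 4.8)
The plan is to construct the required partition by pulling back, via the covering $q_g$, a finite Markov partition of $(S_g,f)$, and then to transport the classical symbolic coding upstairs using the intertwining relation $q_g\circ\widetilde{f}=f\circ\widetilde{f}$, i.e. $q_g\circ\widetilde{f}=f\circ q_g$. Fix a finite Markov partition $\mathcal{R}=\{R_1,\dots,R_n\}$ of $(S_g,f)$ (Thurston). Each birectangle $R_i$ is a topological disk, hence simply connected, so every path component of $q_g^{-1}(R_i)$ maps homeomorphically onto $R_i$; writing $\widetilde{R}_{i,m}:=h_{g-1}^m(\widetilde{R}_{i,0})$ for a fixed lift, we get $q_g^{-1}(R_i)=\bigsqcup_{m\in\mathbb{Z}}\widetilde{R}_{i,m}$, a disjoint union of \emph{compact} copies of $R_i$ (compactness since $q_g|_{\widetilde{R}_{i,m}}$ is a homeomorphism onto the compact $R_i$). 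Endowing $\mathcal{L}$ with the $q_g$-pullback of a metric on $S_g$ — which is automatically $h_{g-1}$-invariant — makes every $\widetilde{R}_{i,m}$ isometric to $R_i$, so the collection $\widetilde{\mathcal{R}}:=\{\widetilde{R}_{i,m}:1\le i\le n,\ m\in\mathbb{Z}\}$ is countable with uniformly bounded diameters.

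First I would check that $\widetilde{\mathcal{R}}$ satisfies Definition~\ref{def:MP}. Finite (indeed uniformly bounded) diameter, disjointness of interiors, and $\bigcup_{i,m}\widetilde{R}_{i,m}=q_g^{-1}(S_g)=\mathcal{L}$ follow immediately from the corresponding properties of $\mathcal{R}$ together with the even-covering structure just described. The Markov crossing conditions (iii)–(iv) are \emph{local} statements about how $\widetilde{f}$ carries the leaves of $\widetilde{\mathcal{F}}_s,\widetilde{\mathcal{F}}_u$; since by Proposition~\ref{prop:foliation} these foliations are the $q_g$-pullbacks of $\mathcal{F}_s,\mathcal{F}_u$, and $q_g$ is a leaf-preserving local homeomorphism intertwining $\widetilde{f}$ and $f$, any putative crossing of $\widetilde{f}(\widetilde{R}_{i,m})$ across $\widetilde{R}_{j,m'}$ projects to the crossing of $f(R_i)$ across $R_j$, where the condition already holds. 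Thus (iii)–(iv) transfer verbatim.

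Next I would define $\widetilde{A}$, indexed by $\{1,\dots,n\}\times\mathbb{Z}$, by $\widetilde{A}_{(i,m),(j,m')}=1$ exactly when $\widetilde{f}(\mathrm{int}\,\widetilde{R}_{i,m})\cap\mathrm{int}\,\widetilde{R}_{j,m'}\neq\emptyset$, and set $\widetilde{\pi}\big((w_k)_{k\in\mathbb{Z}}\big)=\bigcap_{k\in\mathbb{Z}}\widetilde{f}^{-k}(\widetilde{R}_{w_k})$. Well-definedness (the intersection is a single point) uses that $\widetilde{f}$ expands $\widetilde{\mu}_u$ by $\lambda$ and contracts $\widetilde{\mu}_s$ by $\lambda^{-1}$, with the \emph{same} stretch factor as $f$ (Proposition~\ref{prop:foliation}), so the unstable (resp. stable) width of the central intersection $\bigcap_{|k|\le N}\widetilde{f}^{-k}(\widetilde{R}_{w_k})$ decays like $\lambda^{-N}$; crucially this decay is uniform in the deck index $m$, since all $\widetilde{R}_{i,m}$ share the sizes of the finitely many $R_i$ in the deck-invariant metric. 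This uniform shrinking gives continuity (sequences agreeing on a long central block have images in a common small intersection), and it is exactly the point where the non-compactness of $\mathcal{L}$ must be handled with care. Surjectivity follows by lifting: for $\widetilde{x}\in\mathcal{L}$ with $x=q_g(\widetilde{x})$ one takes a $\pi$-coding $(v_k)$ of $x$ and reads off $m_k$ from the sheet in which $\widetilde{f}^k(\widetilde{x})$ lands, producing an admissible $(w_k)=((v_k,m_k))$ with $\widetilde{\pi}((w_k))=\widetilde{x}$; and $\widetilde{\pi}\circ\sigma=\widetilde{f}\circ\widetilde{\pi}$ is immediate from the intersection formula.

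The main obstacle I anticipate is the finite-to-one claim, since $\Sigma_{\widetilde{A}}$ has a countably infinite alphabet and one must exclude infinitely many codings of a single point. I would resolve it by reducing to the (finite) downstairs multiplicity: the forgetful projection $(w_k)=((v_k,m_k))\mapsto(v_k)$ carries $\widetilde{\pi}^{-1}(\widetilde{x})$ into $\pi^{-1}(x)$, which is finite by Thurston's theory, and this projection is in fact \emph{injective}. Indeed, if two preimages of $\widetilde{x}$ share the downstairs code $(v_k)$, then for each $k$ both corresponding lifts $\widetilde{R}_{v_k,m_k}$ and $\widetilde{R}_{v_k,m_k'}$ contain $\widetilde{f}^k(\widetilde{x})$; but distinct lifts of the single rectangle $R_{v_k}$ are disjoint (even in closure, by the even-covering argument above), forcing $m_k=m_k'$ for all $k$. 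Hence $|\widetilde{\pi}^{-1}(\widetilde{x})|\le|\pi^{-1}(x)|<\infty$, so $\widetilde{\pi}$ is finite-to-one. The only delicate bookkeeping is the boundary behaviour, where the downstairs ambiguity in $(v_k)$ occurs; but this is already accounted for in the finiteness of $\pi^{-1}(x)$, so I expect the argument to mirror the classical boundary analysis for $\pi$, transported upstairs through the covering.
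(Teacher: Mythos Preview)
Your proposal is correct and follows essentially the same route as the paper: lift Thurston's finite Markov partition through $q_g$ (using that each birectangle is simply connected, hence evenly covered), define $\widetilde{A}$ and $\widetilde{\pi}$ by the standard intersection formula, and bound $|\widetilde{\pi}^{-1}(\widetilde{x})|$ by $|\pi^{-1}(q_g(\widetilde{x}))|$. The one noteworthy difference is in the finite-to-one step: you argue directly that distinct lifts of a \emph{single} rectangle $R_{v_k}$ are disjoint (as components of $q_g^{-1}(R_{v_k})$), so the forgetful map $(v_k,m_k)\mapsto(v_k)$ is injective on $\widetilde{\pi}^{-1}(\widetilde{x})$; the paper instead isolates an intermediate lemma showing that each admissible downstairs transition $R_\alpha\to R_\beta$ lifts to a \emph{unique} upstairs transition from any given sheet, and deduces that a lift code is determined by its downstairs projection together with the single index $m_0$. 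Both yield the same bound, but the paper's unique-transition lemma is reused later to put $\widetilde{A}$ into the block-tridiagonal form needed for the QBD analysis, so if you adopt your shortcut here you will still need that lemma further on.
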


The result follows through a series of lemmas. In the following lemmas, assume the hypotheses of the preceding theorem.
     
    \begin{lemma}
    	There exists a countable Markov partition for  the dynamical system $(\mathcal{L},\widetilde{f})$.
    \end{lemma}
    
\begin{proof}
Let $\mathcal{R}$ be a finite Markov partition of $(S_g,f)$ consisting of closed rectangles. Let $x_0$ be a fixed point of $f$ (recall the discussion about the fixed point from Section~\ref{sec:mapping-class}). Let us now label the elements of $\mathcal{R}$ as per the following procedure. Let $R_1$ be a rectangle which contains the point $x_0$. Then, choose a rectangle that intersects $R_1$ and label it as $R_2$. By induction on $n$ for $2\leq n\leq m$, choose a rectangle that intersects $\cup_{i=1}^{n-1}R_i$ and label it as $R_n$. This is possible since $S_g$ is connected and also $\cup_{i=1}^{n-1}R_i$ is connected.  

Let $\widetilde{x}_0\in\mathcal{L}$ be a preimage of $x_0$ under $q_g$. Let $\widetilde{R}_{(1,0)}$ be a connected preimage of $R_1$ under $q_g$ containing $\widetilde{x}_0$ such that $\widetilde{R}_{(1,0)}$ is bijective with $R_1$ under $q_g$. Let $\widetilde{R}_{(2,0)}$ be the connected preimage of $R_2$ under $q_g$ such that $\widetilde{R}_{(2,0)}$ is bijective with $R_2$ under $q_g$ and intersects $\widetilde{R}_{(1,0)}$. This is possible since $R_2$ intersects $R_1$. By induction on $n$, for $2\leq n\leq m$, let $\widetilde{R}_{(n,0)}$ be the connected preimage of $R_n$ under $q_g$ such that $\widetilde{R}_{(n,0)}$ is bijective with $R_n$ under $q_g$ and intersects $\cup_{i=1}^{n-1}\widetilde{R}_{(i,0)}$. This is possible since $R_n$ intersects $\cup_{i=1}^{n-1}R_i$. Now we will extend the preimages $\{R_{n,0}\}_{1\le n\le m}$ to a Markov partition of $(\mathcal{L},\widetilde{f})$. 

Set $K_0=\cup_{i=1}^{m}\widetilde{R}_{(i,0)}$ and let $K_j=h_{g-1}^j(K_0)$, for all $j\in\mathbb{Z}$. Observe that $K_j$ is a connected subsurface of $\mathcal{L}$ such that $q_g(K_j)=S_g$, $q_g$ is injective in the interior of $K_j$, and $\widetilde{x}_j=h_{g-1}^j(\widetilde{x}_0)\in K_j$, for all $j\in\mathbb{Z}$. Furthermore, $\mathcal{L}=\cup_{j\in \mathbb{Z}}K_j$. Set $R_{(i,j)}=h_{g-1}^j(R_{(i,0)})$, for all $1\leq i\leq m$ and $j\in\mathbb{Z}$. For each $1\le i\le m$, $q_g^{-1}(R_i)$ is a countable disjoint union $\{h^{j}_{g-1}(R_{(i,j)})\}_{j\in\mathbb{Z}}$. Note that each $R_{(i,j)}$ is a connected subset of $q_g^{-1}(R_i)$ and is bijective with $R_i$ due to the property of the covering map, that is, $q_g^{-1}(y)=\{h^{j}_{g-1}(\widetilde{y}_0)\}_{j\in \mathbb{Z}}$ for $\forall y\in S_g$ and $\widetilde{y}_0\in q_g^{-1}(x)$. 

We will now show that the collection 
\[
\widetilde{\mathcal{R}}=\{R_{(i,j)}\ : \ i=1,\ldots,m, \ j\in \mathbb{Z}\},
\]
is a Markov partition of $(\mathcal{L}, \widetilde{f})$.

We first establish that each $R_{(i,j)}$ is a good birectangle. Recall that we defined the lifted stable and unstable foliations on $\mathcal{L}$ as the pullback of the stable and unstable foliations on $S_g$. Further since $R_i$ is a good birectangle, the boundary of $R_i$ consists of two disjoint stable segments and two disjoint unstable segments. Thus, the preimage of each stable segment under $q_g$ is a union of stable segments in $\mathcal{L}$, and the preimage of each unstable segment under $q_g$ is a union of unstable segments in $\mathcal{L}$. Moreover since $R_{(i,j)}$ is a connected component of $q_g^{-1}(R_i)$, it must be bounded by exactly two disjoint stable segments and two disjoint unstable segments, which are the preimages of the corresponding segments in $R_i$. Thus, each $R(i,j)$ is a closed rectangle and opposite sides are disjoint. Therefore, each $R_{(i,j)}$ is a good birectangle. 

Now we verify all the conditions of Markov partition for $\widetilde{\mathcal{R}}=\{R_{(i,j)}\}$. At each step, we use that fact that $\mathcal{R}$ is a Markov partition.

\begin{enumerate}[(i)]
    \item For each $1\leq i_1,i_2\leq m$, since $\operatorname{int} R_{i_1} \cap \operatorname{int} R_{i_2} = \emptyset$, we have $\operatorname{int} R_{(i_1,j_1)} \cap \operatorname{int} R_{(i_2,j_2)} = \emptyset$, for all $j_1,j_2\in\mathbb{Z}$.
    \item Applying $q_g^{-1}$ to $\cup_{i=1}^m R_i = S_g$, we get $\cup_{i=1}^m \cup_{j\in\mathbb{Z}} R(i,j) = \mathcal{L}$.
    \item If $\widetilde{x}\in \operatorname{int} R_{(i,j)}$ and $\widetilde{f}(\widetilde{x})\in \operatorname{int} R_{(i',j')}$, then $x=q_g(\widetilde{x})\in \operatorname{int} R_i$ and $f(x)\in \operatorname{int} R_{i'}$. \\
    Further since $f(\mathcal{F}^s(x, R_i)) \subset \mathcal{F}^s(f(x), R_{i'})$, we get $\widetilde{f}(\widetilde{\mathcal{F}}^s(\widetilde{x}, R_{(i,j)})) \subset q^{-1}_g(\mathcal{F}^s(f(x), R_{i'}))$. \\
    Since $\{R_{(i',j'')}\}_{j''\in\mathbb{Z}}$ is a disjoint collection of good birectangles, we get \[
    q^{-1}_g(\mathcal{F}^s(f(x), R_{i'}))=\bigsqcup_{j''\in\mathbb{Z}}\widetilde{\mathcal{F}}^s(h^{j''}(\widetilde{f}(\widetilde{x})),  R_{(i',j'')}).
    \]
    Since $\widetilde{f}(\widetilde{x})\in \operatorname{int} R_{(i',j')}$, we get
    \[
    \widetilde{f}(\widetilde{x})\in \widetilde{f}(\widetilde{\mathcal{F}}^s(\widetilde{x}, R_{(i,j)}))\cap \widetilde{\mathcal{F}}^s(\widetilde{f}(\widetilde{x}),  R_{(i',j')})\neq \emptyset. 
    \]
    Furthermore, since $\widetilde{f}$ is a homeomorphism, $\widetilde{f}(\widetilde{\mathcal{F}}^s(\widetilde{x}, R_{(i,j)}))$ is connected stable segment and
    \[
    \widetilde{f}(\widetilde{\mathcal{F}}^s(\widetilde{x}, R_{(i,j)})) \subset \widetilde{\mathcal{F}}^s(\widetilde{f}(\widetilde{x}),  R_{(i',j')}).
    \] 
    \noindent
    Similarly, we can show that $\widetilde{f}^{-1}(\widetilde{\mathcal{F}}^u(\widetilde{f}(\widetilde{x}),  R_{(i',j')}))\subset\widetilde{\mathcal{F}}^u(\widetilde{x}, R_{(i,j)}) $.
    \item The arguments for this part are similar to those of (iii). Since $f(\mathcal{F}^u(x, R_i)) \cap R_{i'} = \mathcal{F}^u(f(x), R_{i'})$, we have 
    \[
    \widetilde{f}(\widetilde{\mathcal{F}}^u(\widetilde{x}, R_{(i,j)}))\cap R_{(i',j')} \subset q^{-1}_g(f(\mathcal{F}^u(x, R_i)) \cap R_{i'})=q^{-1}_g(\mathcal{F}^u(f(x), R_{i'})).
    \] 
    Moreover
    \[
    q^{-1}_g(\mathcal{F}^u(f(x), R_{i'}))=\bigsqcup_{j''\in\mathbb{Z}}\widetilde{\mathcal{F}}^u(h^{j''}(\widetilde{f}(\widetilde{x})), R_{(i',j'')}). 
    \]
    Since $\widetilde{f}(\widetilde{x})\in \operatorname{int} R_{(i',j')}$, we get \[
    \widetilde{f}(\widetilde{x})\in \widetilde{f}(\widetilde{\mathcal{F}}^s(\widetilde{x}, R_{(i,j)}))\cap \widetilde{\mathcal{F}}^s(\widetilde{f}(\widetilde{x}),  R_{(i',j')})\neq \emptyset. 
    \]
    Furthermore, $\widetilde{f}(\widetilde{\mathcal{F}}^u(\widetilde{x}, R_{(i,j)}))$ is connected unstable segment and \[\widetilde{f}(\widetilde{\mathcal{F}}^u(\widetilde{x}, R_{(i,j)})) \cap R_{(i',j')} \subset \widetilde{\mathcal{F}}^u(\widetilde{f}(\widetilde{x}),  R_{(i',j')}).\] 
    Since $q_g$ is a homeomorphism on $R_{(i',j')}$, we get $\widetilde{f}(\widetilde{\mathcal{F}}^u(\widetilde{x}, R_{(i,j)}) \cap R_{(i',j')}) = \widetilde{\mathcal{F}}^u(\widetilde{f}(\widetilde{x}),  R_{(i',j')})$.
    \noindent
    Similarly, we can show that $\widetilde{f}^{-1}(\widetilde{\mathcal{F}}^s(\widetilde{f}(\widetilde{x}),  R_{(i',j')}))\cap R_{(i,j)} = \widetilde{\mathcal{F}}^s(\widetilde{x}, R_{(i,j)}) $.
\end{enumerate}

\end{proof}

\begin{lemma}\label{lemma:uniqueint}
If $f(\operatorname{int}(R_{\alpha})) \cap \operatorname{int}(R_{\beta}) \neq \emptyset$, then 
\begin{enumerate}[(i)]
	\item there exists a unique $j'=j'(\alpha,\beta)\in\mathbb{Z}$ such that $\widetilde{f}(\operatorname{int}(R_{(\alpha,0)})) \cap \operatorname{int}(R_{(\beta,j')}) \neq \emptyset$. Consequently, for each $j\in\mathbb{Z}$, $j+j'$ is the unique integer for which $\widetilde{f}(\operatorname{int}(R_{(\alpha,j)})) \cap \operatorname{int}(R_{(\beta,j+j')}) \neq \emptyset$.
	\item there exists a unique $j'=j'(\alpha,\beta)\in\mathbb{Z}$ such that $\widetilde{f}(\operatorname{int}(R_{(\alpha,j')})) \cap \operatorname{int}(R_{(\beta,0)}) \neq \emptyset$. Consequently, for each $j\in\mathbb{Z}$, $j+j'$ is the unique integer for which $\widetilde{f}(\operatorname{int}(R_{(\alpha,j+j')})) \cap \operatorname{int}(R_{(\beta,j)}) \neq \emptyset$.
\end{enumerate} 
\end{lemma}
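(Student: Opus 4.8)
The plan is to fix the pair $(\alpha,\beta)$ with $f(\text{int}(R_\alpha))\cap\text{int}(R_\beta)\neq\emptyset$ and reduce statement (i) to showing that a single $\mathbb{Z}$-valued ``index function'' is constant. Since $q_g$ maps $R_{(\alpha,0)}$ homeomorphically onto $R_\alpha$, let $s\colon R_\alpha\to R_{(\alpha,0)}$ be the inverse section. Put $U=\text{int}(R_\alpha)\cap f^{-1}(\text{int}(R_\beta))$, which is nonempty by hypothesis. For $x\in U$ we have $q_g(\widetilde{f}(s(x)))=f(q_g(s(x)))=f(x)\in\text{int}(R_\beta)$, and since $q_g^{-1}(\text{int}(R_\beta))=\bigsqcup_{j\in\mathbb{Z}}\text{int}(R_{(\beta,j)})$ is a disjoint union of open sets, there is a unique $j(x)\in\mathbb{Z}$ with $\widetilde{f}(s(x))\in\text{int}(R_{(\beta,j(x))})$. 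This defines an index function $j\colon U\to\mathbb{Z}$, and (i) amounts to the assertion that $j$ is constant, with common value $j'=j'(\alpha,\beta)$.

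First I would record that $j$ is locally constant: the map $x\mapsto\widetilde{f}(s(x))$ is continuous and the targets $\text{int}(R_{(\beta,j)})$ are pairwise disjoint open sets, so the preimage of each is open. Next I would argue that $U$ is connected. Because $f$ is a homeomorphism, it restricts to a homeomorphism $U\to f(U)=f(\text{int}(R_\alpha))\cap\text{int}(R_\beta)$, and by the Markov crossing property (item (iv) of Definition~\ref{def:MP}, ``$f(R_\alpha)$ goes across $R_\beta$ just once'') the set $f(\text{int}(R_\alpha))\cap\text{int}(R_\beta)$ is a single crossing band, hence connected; therefore $U$ is connected. A locally constant function on a connected set is constant, so $j\equiv j'$. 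This simultaneously yields existence (evaluate at any point of $U$, noting $s(\text{int}(R_\alpha))\subset\text{int}(R_{(\alpha,0)})$) and uniqueness: if $\widetilde{f}(\text{int}(R_{(\alpha,0)}))$ met some $\text{int}(R_{(\beta,j)})$ at a point $\widetilde{f}(\widetilde{z})$, then $z=q_g(\widetilde{z})\in U$ and $\widetilde{z}=s(z)$, whence $j=j(z)=j'$.

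The shift-equivariant ``consequently'' clause then follows from the assumed commutation $\widetilde{f}\circ h_{g-1}=h_{g-1}\circ\widetilde{f}$ together with $R_{(i,j)}=h_{g-1}^j(R_{(i,0)})$. Applying the homeomorphism $h_{g-1}^j$ and commuting it past $\widetilde{f}$ shows that $\widetilde{f}(\text{int}(R_{(\alpha,j)}))\cap\text{int}(R_{(\beta,j+k)})\neq\emptyset$ holds if and only if $\widetilde{f}(\text{int}(R_{(\alpha,0)}))\cap\text{int}(R_{(\beta,k)})\neq\emptyset$, i.e.\ if and only if $k=j'$; this identifies $j+j'$ as the unique index for the translated rectangles. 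Part (ii) is obtained from (i) by applying $h_{g-1}^{-j'}$: the relation for $(\alpha,0)\to(\beta,j')$ becomes one for $(\alpha,-j')\to(\beta,0)$, so the unique offset in (ii) is $-j'(\alpha,\beta)$, and its ``consequently'' statement follows by the same translation argument.

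I expect the only genuine obstacle to be the connectedness of $U$: existence and the shift-equivariance are formal consequences of the covering relation $q_g\circ\widetilde{f}=f\circ q_g$ and the commutation with $h_{g-1}$, but the uniqueness of $j'$ hinges precisely on $f(R_\alpha)$ crossing $R_\beta$ exactly once. Were one to know only that the interiors meet, $U$ could a priori be disconnected and $j$ could take several values; it is exactly the ``good birectangle / crosses just once'' clause of the Markov partition that rules this out and makes $j'(\alpha,\beta)$ well defined.
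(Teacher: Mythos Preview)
Your proof is correct and follows essentially the same approach as the paper's own argument: both hinge on the connectedness of $f(\text{int}(R_\alpha))\cap\text{int}(R_\beta)$ from the Markov partition's ``crosses just once'' clause, together with the fact that $q_g$ is a homeomorphism on each rectangle interior, and then invoke the commutation $\widetilde{f}\circ h_{g-1}=h_{g-1}\circ\widetilde{f}$ for the shift-equivariant consequence. The only cosmetic difference is that you package uniqueness as ``a locally constant $\mathbb{Z}$-valued function on a connected set is constant,'' whereas the paper argues by contradiction (two distinct indices would force $q_g\widetilde{f}^{-1}$ of a disconnected set to equal the connected crossing band downstairs); these are the same idea in two dresses.
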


\begin{proof}
We prove the first statement, the second one follows similarly. Since $f(\operatorname{int}(R_{\alpha})) \cap \operatorname{int}(R_{\beta}) \neq \emptyset$ and $\widetilde{f}$ is a lift of $f$, there exists at least one $j' \in \mathbb{Z}$ such that $\widetilde{f}(\operatorname{int}(R_{(\alpha,0)})) \cap \operatorname{int}(R_{(\beta,j')}) \neq \emptyset$. We now prove uniqueness of $j'$. On the contrary suppose there are two such $j'$ and $j''$. Set $C'=\widetilde{f}(\operatorname{int}(R_{(\alpha,0)})) \cap \operatorname{int}(R_{(\beta,j')})$ and $C''=\widetilde{f}(\operatorname{int}(R_{(\alpha,0)})) \cap \operatorname{int}(R_{(\beta,j'')})$. Clearly $C',C''$ are connected, $C'\cap C''=\emptyset$ and $C'\cup C''$ is disconnected. Now, $\widetilde{f}^{-1}(C'\cup C'')$ is a subset of $\operatorname{int}(R_{(\alpha,0)})$ and is disconnected. Since $q_g$ is a homeomorphism on $\operatorname{int}(R_{(\alpha,0)})$, $q_g(\widetilde{f}^{-1}(C'\cup C''))$ is disconnected in $S_g$. Since
\[
q_g \widetilde{f}^{-1}(C' \cup C'') = f^{-1} q_g(C' \cup C'') = \operatorname{int} (R_\alpha) \cap f^{-1}(\operatorname{int} (R_\beta)).
\]
and $\operatorname{int} (R_\alpha) \cap f^{-1}(\operatorname{int} (R_\beta))$ is connected, we have arrived at a contradiction. Thus, there is a unique $j'\in \mathbb{Z}$ as desired. The final statement of (i) follows since $h_g$ and $\widetilde{f}$ are commuting homeomorphisms.  
\end{proof}

\noindent \textbf{Definition of the countable Markov shift $(\widetilde{\Sigma}_{\widetilde{A}},\sigma)$}:\\
Define a countably infinite $0-1$ matrix $\widetilde{A}$ indexed by the tuples $\widetilde{\Sigma}=\{(i,j)\ : \ i=1,\dots,m,\ j\in\mathbb{Z}\}$ as follows: 
\[
\widetilde{A}_{(i_1,j_1)(i_2,j_2)} =
\begin{cases}
	1, & \text{if } \widetilde{f}(\operatorname{int}(R_{(i_1,j_1)}) \cap \operatorname{int}(R_{(i_2,j_2)}) \neq \emptyset, \\
	0, & \text{otherwise}.
\end{cases}
\]
This defines a countable Markov shift $(\widetilde{\Sigma}_{\widetilde{A}},\sigma)$. Any sequence $(i_n,j_n)_n\in \widetilde{\Sigma}_{\widetilde{A}}$ can be expressed as a function $\phi=(\phi_1,\phi_2):\mathbb{Z}\to \{1,\dots,m\}\times \mathbb{Z}$ such that $\phi(n)=(\phi_1(n),\phi_2(n))=(i_n,j_n)$, for all $n\in\mathbb{Z}$. This notation will be used in the following lemma.

\begin{rem}
	A quick observation from the above results is that $\widetilde{A}_{(i_1,j_1)(i_2,j_2)}=1$ implies $A_{i_1,i_2}=1$. Conversely, $A_{i_1,i_2}=1$ implies for each $j_1$, there exists a unique $j_2$ such that $\widetilde{A}_{(i_1,j_1)(i_2,j_2)}=1$. 
\end{rem}

\begin{lemma}\label{lemma:uniqseq}
Given $(i_n)_n\in\Sigma_{{A}}$ and $j_0\in\mathbb{Z}$, there exists a unique sequence $\phi=(\phi_1,\phi_2)$ in $\widetilde{\Sigma}_{\widetilde{A}}$ for which $\phi_1(n)=i_n$, for $n\in\mathbb{Z}$ and $\phi_2(0)=j_0$. We will denote this sequence by $\phi_{{(i_n)_n},j_0}$.
\end{lemma}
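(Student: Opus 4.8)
The plan is to use Lemma~\ref{lemma:uniqueint} as a propagation rule that determines the second coordinate $\phi_2$ uniquely from the anchor value $\phi_2(0)=j_0$, by a two-sided recursion along the prescribed first coordinates $(i_n)_n$, and then to read off existence and uniqueness from the same statement. The essential observation is that once the base transition $i_n\to i_{n+1}$ is fixed, there is a single admissible choice of the increment of the second coordinate, so the lifted sequence behaves like a telescoping sum of these increments anchored at $n=0$.

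First I would record that, since $(i_n)_n\in\Sigma_A$, we have $A_{i_n i_{n+1}}=1$, i.e.\ $f(\text{int}(R_{i_n}))\cap\text{int}(R_{i_{n+1}})\neq\emptyset$, for every $n\in\mathbb{Z}$; hence Lemma~\ref{lemma:uniqueint} applies to each consecutive pair. For the forward direction ($n\ge 0$), part (i) of Lemma~\ref{lemma:uniqueint} gives a unique integer $j'(i_n,i_{n+1})$ such that, for any prescribed $\phi_2(n)$, the value $\phi_2(n+1):=\phi_2(n)+j'(i_n,i_{n+1})$ is the \emph{unique} integer making $\widetilde{f}(\text{int}(R_{(i_n,\phi_2(n))}))\cap\text{int}(R_{(i_{n+1},\phi_2(n+1))})\neq\emptyset$. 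Starting from $\phi_2(0)=j_0$, this recursion defines $\phi_2(n)$ for all $n\ge 1$. Symmetrically, for the backward direction ($n\le 0$), I would apply part (ii) of Lemma~\ref{lemma:uniqueint} to the pair $(i_{n-1},i_n)$: it determines $\phi_2(n-1)$ uniquely from $\phi_2(n)$ as the unique integer with $\widetilde{f}(\text{int}(R_{(i_{n-1},\phi_2(n-1))}))\cap\text{int}(R_{(i_n,\phi_2(n))})\neq\emptyset$. Iterating downward from $\phi_2(0)=j_0$ defines $\phi_2(n)$ for all $n\le -1$. Setting $\phi(n)=(i_n,\phi_2(n))$, every transition satisfies $\widetilde{A}_{\phi(n)\phi(n+1)}=1$ by construction, so $\phi\in\Sigma_{\widetilde{A}}$, which proves existence.

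For uniqueness, suppose $\psi=(\psi_1,\psi_2)\in\Sigma_{\widetilde{A}}$ satisfies $\psi_1(n)=i_n$ for all $n$ and $\psi_2(0)=j_0$. Admissibility of $\psi$ forces $\widetilde{f}(\text{int}(R_{(i_n,\psi_2(n))}))\cap\text{int}(R_{(i_{n+1},\psi_2(n+1))})\neq\emptyset$ for every $n$, so the uniqueness clauses of Lemma~\ref{lemma:uniqueint} pin down $\psi_2(n+1)$ from $\psi_2(n)$ in the forward direction and $\psi_2(n-1)$ from $\psi_2(n)$ in the backward direction by exactly the same relations used above. A two-sided induction from $\psi_2(0)=j_0=\phi_2(0)$ then yields $\psi_2(n)=\phi_2(n)$ for all $n$, hence $\psi=\phi$. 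No single step is a genuine obstacle once Lemma~\ref{lemma:uniqueint} is available; the only point requiring care is keeping the two directions aligned, namely invoking part (i) for the forward step and part (ii) for the backward step, and noting that the uniqueness driving the construction is precisely what forecloses any competing lift, so existence and uniqueness emerge together.
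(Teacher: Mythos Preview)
Your proof is correct and follows essentially the same approach as the paper: both argue by two-sided induction from the anchor $\phi_2(0)=j_0$, invoking Lemma~\ref{lemma:uniqueint} at each step to propagate the second coordinate uniquely forward and backward. Your write-up is slightly more explicit in separating existence from uniqueness and in flagging which part of Lemma~\ref{lemma:uniqueint} is used in each direction, but the underlying argument is the same.
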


\begin{proof}
	We are given that $\phi(0)=(\phi_1(0),\phi_2(0))=(i_0,j_0)$. Since $f(\operatorname{int}(R_{i_0})) \cap \operatorname{int}(R_{i_1}) \neq \emptyset$, by Lemma~\ref{lemma:uniqueint}, there exists a unique $j_1=j_0+j(i_0,i_1)$ such that $\widetilde{f}(\operatorname{int}(R_{(i_0,j_0)}) \cap \operatorname{int}(R_{(i_1,j_1)}) \neq \emptyset$. Thus, $\phi(1)=(\phi_1(1),\phi_2(1))=(i_1,j_1)$. By induction on $n\ge 2$, there is a unique sequence ${(j_n)_n}$ of integers such that $\widetilde{f}(\operatorname{int}(R_{(i_n,j_n)}) \cap \operatorname{int}(R_{(i_{n+1},j_{n+1})}) \neq \emptyset$.
	
	Furthermore, since $f(\operatorname{int}(R_{i_{-1}})) \cap \operatorname{int}(R_{i_0}) \neq \emptyset$, by Lemma~\ref{lemma:uniqueint}, there exists a unique $j_{-1}=j_0+j(i_{-1},i_0)$ such that $\widetilde{f}(\operatorname{int}(R_{(i_{-1},j_{-1})}) \cap \operatorname{int}(R_{(i_0,j_0)}) \neq \emptyset$. Thus, $\phi(-1)=(i_{-1},j_{-1})$. By induction on $n\ge 2$, there is a unique sequence ${(j_{-n})_n}$ of integers such that $\widetilde{f}(\operatorname{int}(R_{(i_{-n},j_{-n})}) \cap \operatorname{int}(R_{(i_{-n+1},j_{-n+1})}) \neq \emptyset$. Hence, there is a unique sequence $\phi$ as desired.  
\end{proof}

\begin{lemma}
	There exists a semi-conjugacy $\widetilde{\pi}: (\widetilde{\Sigma}_{\widetilde{A}},\sigma) \to 
	(\mathcal{L},\widetilde{f})$.
\end{lemma}

\begin{proof}
	Define $\widetilde{\pi}:\widetilde{\Sigma}_{\widetilde{A}} \to \mathcal{L}$ as follows: 
	\[
	\widetilde{\pi}((i_n,j_n)_n)=\bigcap_{n\in\mathbb{Z}}\widetilde{f}^{-n}(R_{(i_n,j_n)}).
	\]
	The map $\widetilde{\pi}$ is well-defined since $\cap_{n\in\mathbb{Z}}\widetilde{f}^{-n}(R_{(i_n,j_n)})$ is a singleton for each $(i_n,j_n)_n\in \widetilde{\Sigma}_{\widetilde{A}}$ since $\widetilde{\mathcal{R}}$ is a Markov partition. Moreover, $\widetilde{\pi}$ is continuous with the product topology on $\widetilde{\Sigma}_{\widetilde{A}}$. Further $\widetilde{\pi}$ is surjective since $\pi:\Sigma_{A}\to S_g$ is surjective. Finally, 
	\[\widetilde{\pi}(\sigma((i_n,j_n)_n))= \widetilde{\pi}((i_{n+1},j_{n+1})_n)=\bigcap_{n\in\mathbb{Z}}\widetilde{f}^{-n}(R_{(i_{n+1},j_{n+1})})
	\]
	\[=\widetilde{f}\left(\bigcap_{n\in\mathbb{Z}}\widetilde{f}^{-(n+1)}(R_{(i_{n+1},j_{n+1})})\right)=\widetilde{f}(\widetilde{\pi}((i_n,j_n)_n)),
	\]
	That is,  $\widetilde{f}\circ\widetilde{\pi}=\widetilde{\pi}\circ \sigma.$  Thus $\widetilde{\pi}$ is a semi-conjugacy.
\end{proof}

\begin{lemma}
The map $\widetilde{\pi}$ is finite-to-one.
\end{lemma}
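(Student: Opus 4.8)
The plan is to leverage the known finite-to-one property of the base semi-conjugacy $\pi:\Sigma_A\to S_g$ and transfer it upstairs via the commutative structure of the cover. Recall that Thurston's construction gives that $\pi$ is finite-to-one, with the fiber $\pi^{-1}(x)$ being large precisely when $x$ lies on the boundary (stable/unstable leaves) of the Markov rectangles; for a point in the interior of a rectangle the fiber is a singleton, and in general the cardinality is bounded by the number of rectangles meeting at a singularity. First I would fix an arbitrary $\widetilde{x}\in\mathcal{L}$ and set $x=q_g(\widetilde{x})\in S_g$. Every point $\widetilde{y}\in\widetilde{\pi}^{-1}(\widetilde{x})$ is a sequence $(i_n,j_n)_n\in\Sigma_{\widetilde{A}}$ whose image under the ``forget the second coordinate'' map $(i_n,j_n)_n\mapsto(i_n)_n$ lands in $\pi^{-1}(x)\subset\Sigma_A$, because $q_g\circ\widetilde{f}^{-n}=f^{-n}\circ q_g$ forces $q_g(\widetilde{\pi}((i_n,j_n)_n))=\pi((i_n)_n)$.

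The key step is then to bound, for each fixed base sequence $(i_n)_n\in\pi^{-1}(x)$, how many lifted sequences $\phi=(\phi_1,\phi_2)$ with $\phi_1=(i_n)_n$ can map to the single point $\widetilde{x}$. Here I would invoke Lemma~\ref{lemma:uniqseq}: once $(i_n)_n$ is fixed, a choice of the single integer $j_0=\phi_2(0)$ determines the \emph{entire} sequence $\phi=\phi_{(i_n)_n,j_0}$ uniquely. So the fiber over $\widetilde{x}$ within a fixed base-fiber class is parametrized by the admissible values of $j_0$. The finiteness then reduces to showing that only finitely many $j_0\in\mathbb{Z}$ are compatible with $\widetilde{\pi}(\phi_{(i_n)_n,j_0})=\widetilde{x}$. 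This is where the deck-group structure does the work: the point $\widetilde{x}$ lies in only finitely many rectangles $R_{(i_0,j)}$ of the partition $\widetilde{\mathcal{R}}$ (indeed each $K_j$ is a fundamental-domain-like piece and $\widetilde{x}$ meets at most the rectangles in the finitely many $K_j$'s adjacent to it), so $\phi_2(0)$ is forced to take one of finitely many values; equivalently, since $q_g$ restricted to $\mathrm{int}(R_{(i_0,j)})$ is a bijection onto $\mathrm{int}(R_{i_0})$, and $\widetilde{x}$ has a single preimage-over-$x$ in each such rectangle, the index $j$ with $\widetilde{x}\in R_{(i_0,j)}$ is constrained to a finite set.

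Putting these together, I would bound $|\widetilde{\pi}^{-1}(\widetilde{x})|$ by the product of $|\pi^{-1}(x)|$ (finite, by Thurston) and the number of admissible second coordinates $j_0$ (finite, by the local-finiteness of the lifted partition at $\widetilde{x}$). Concretely, for each $(i_n)_n\in\pi^{-1}(x)$ there are at most finitely many $j_0$ with $\widetilde{x}\in R_{(i_0,j_0)}$, and each such $j_0$ yields exactly one $\phi$ by Lemma~\ref{lemma:uniqseq}; hence $\widetilde{\pi}^{-1}(\widetilde{x})$ is finite. I expect the main obstacle to be the careful verification that the set of valid $j_0$ is genuinely finite and not merely that each base sequence lifts uniquely once $j_0$ is chosen — one must rule out that $\widetilde{x}$ (especially a singular point, where infinitely many rectangles could \emph{a priori} accumulate) lies in infinitely many rectangles $R_{(i_0,j)}$. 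This is controlled by the fact that the rectangles have finite diameter and the cover $q_g$ is properly discontinuous, so any compact neighborhood of $\widetilde{x}$ meets only finitely many translates $h_{g-1}^j(R_{(i_0,0)})$; I would spell out this local finiteness explicitly as the crux of the argument.
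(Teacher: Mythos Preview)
Your proposal is correct and follows essentially the same approach as the paper: project via $q_g$, use the finiteness of $\pi^{-1}(q_g(\widetilde{x}))$, and invoke Lemma~\ref{lemma:uniqseq} to pin down the lifted sequence once the base sequence and $j_0$ are fixed. The paper is in fact slightly terser than you are---it simply asserts $\phi_2(0)=j$ for the chosen $j$ with $\widetilde{x}\in K_j$, whereas you take more care with the possibility that $\widetilde{x}$ lies on a rectangle boundary and hence in several $R_{(i_0,j)}$, invoking local finiteness of the lifted partition; this extra care is warranted but does not change the argument.
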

\begin{proof}
Let $x \in K_j$. Since the map $\pi$ is finite-to-one, we have  
\[
\pi^{-1}(q_g(x)) = \left\{ (i_n^1)_n, (i_n^2)_n, \dots, (i_n^k)_n \right\} \subset \Sigma_A.
\]  
We claim that  
\[
\widetilde{\pi}^{-1}(x) = \left\{ \phi_{(i_n^1)_n,j}, \phi_{(i_n^2)_n,j}, \dots, \phi_{(i_n^k)_n,j} \right\} \subset \widetilde{\Sigma}_{\widetilde{A}}.
\]
Since $x \in K_j$ and $q_g(x) \in R_{i_0^\ell}$, for only $\ell = 1, \dots, k$, we have that $x \in R_{(i_0^\ell,j)}$. Thus, any sequence $\phi$ in $\widetilde{\pi}^{-1}(x)$ satisfies $\phi_2(0) = j$ and $\phi_1(0) = i_0^\ell$, for $\ell=1,\dots,k$. Consider one particular sequence $\phi_{(i_n^\ell)_n,j}$. Since $q_g(\widetilde{f}^n(x)) = f^n(q_g(x))$, we have $\phi_1(n) = i_n^\ell$, for all $n\in\mathbb{Z}$. By Lemma~\ref{lemma:uniqseq}, there exists a unique $\phi_{(i_n^\ell)_n,j}$ such that $\widetilde{\pi}(\phi_{(i_n^\ell)_n,j}) = x$. This is the case for each $\ell=1,\dots,k$. Hence, we have proved our claim.
\end{proof}

The indexing of rectangles of the Markov partition $\widetilde{\mathcal{R}}$ allows us to get a nice structure of matrix $\widetilde{A}$ in Section~\ref{sec:QBD} and to get the desired properties of the map $\widetilde{\pi}$.

The collection of all the possible lifts of $f$ on $\mathcal{L}$ under $q_g$ is given by $\{h_g^n\widetilde{f}\ : \ n\in\mathbb{Z}\}$. In the following section, we use the symbolic coding of $(\mathcal{L},\widetilde{f})$ to study the dynamical properties of all the lifts. 

\subsection{\texorpdfstring{Structure of the transition matrix $\widetilde{A}$}{Structure of the transition matrix A-tilde}}\label{sec:QBD}
In this section, we will first obtain the structure of the transition matrix $\widetilde{A}$ and then we will show that given irreducibility, $\widetilde{A}$ can never be positive recurrent. Since $K_0$ is compact, $\widetilde{f}(K_0)$ is also compact. Therefore, there exists $k\ge 1$ such that $\widetilde{f}(K_0) \subset \cup_{j=-k}^{k} K_{j}$. Set $g_k=(g-1)(2k+1)+1$, for $k\ge 1$. 

Since $\mathrm{LMod}_{q_g}(S_g) = \cap_{k \geq 2} \mathrm{LMod}_{p_k}(S_g)$, then there exists a pseudo-Anosov lift $\widetilde{f}_{k}: (S_{g_{k}},\widetilde{x}_0^k) \to (S_{g_{k}},\widetilde{x}_0^k)$ of the pseudo-Anosov map $f:(S_g,x_0)\to(S_g,x_0)$ with $p_k(\widetilde{x}_0^k)=x_0$ and $q_{g_k}(\widetilde{x}_0)=\widetilde{x}_0^k$. Consequently, $\widetilde{f}$ is a lift of $\widetilde{f}_{k}$ and $q_{g_{k}}(\cup_{i=-k}^{k} K_{i}) = S_{g_{k}}$. Observe that $\{R_{(i,j)}\ : \ i=1,\dots,m,\ j=-k,\dots,k\}$ is a Markov partition of $f_{k}$ with an irreducible and aperiodic transition matrix $A_k$, say. Since $\widetilde{f}(K_0) \subset \cup_{j=-k}^{k} K_{j}$ and $h_g$, $\widetilde{f}$ commute, we have 
\[
\widetilde{f}(\cup_{j=-k}^{k} K_{j}) \subset \cup_{j=-2k}^{2k} K_{j}.
\]
We now redefine the notations to obtain a nice structure for the matrix $\widetilde{A}$. Use the map $f_k$ on $S_{g_{k}}$ in place of $f$ on $S_g$, $\cup_{i=-k}^{k} K_{i}$ in place of $K_0$, $m(2k+1)$ in place of $m$, $g_k$ in place of $g$, and $A_k$ in place of $A$. Think of $\widetilde{f}$ as a lift of $f_{k}$ (which we will now denote as $f$) from now on. With the new notations, let $K_n=h_{g-1}^n(K_0)$, for $n\in\mathbb{Z}$. Observe that 
\[
\widetilde{f}(K_0) \subset K_{-1}\cup K_0\cup K_1. 
\]

\begin{rem}\label{rem:club}
We will use a similar argument of using the map $f_k$ on $S_{g_{k}}$ in place of $f$ on $S_g$, and considering the lift $\widetilde{f}$ of $f_k$ to obtain nearest-neighbor connections only in Section~\ref{subsec:TopTran} for neighborhoods as well. It will give a tridiagonal block structure for the train track matrix like it does for the matrix $\widetilde{A}$ as we shall observe in the upcoming discussion. 
\end{rem}

Using the results from the preceding section, we obtain three matrices $A_{-1}, A_0, A_1$ of order $m$ given as follows: for $\alpha,\beta\in\{1,\dots,m\}$ and $j=-1,0,1$,
\[
(A_{j})_{\alpha,\beta}=\begin{cases}
	1, & \widetilde{f}(\operatorname{int}( R_{(\alpha,0)})) \cap \operatorname{int}( R_{(\beta,j)}) \neq \emptyset,\\
	0, & \text{ otherwise}.
\end{cases}
\]
Moreover, by Lemma~\ref{lemma:uniqueint}, we have
\[
A_{\alpha,\beta}=(A_{-1})_{\alpha,\beta}+(A_{0})_{\alpha,\beta}+(A_{1})_{\alpha,\beta}.
\]
Further, because $h_{g-1}$ commutes with $\widetilde{f}$, for $j=-1,0,1$ we have
\[
\widetilde{f}(\operatorname{int} R_{(\alpha,0)})\cap \operatorname{int} R_{(\beta,j)}\neq\emptyset
\quad\Longleftrightarrow\quad
\widetilde{f}(\operatorname{int} R_{(\alpha,\ell)})\cap \operatorname{int} R_{(\beta,\ell+j)}\neq\emptyset.
\]
Hence, the transition matrix $\widetilde{A}$ for the lift $\widetilde{f}$ and the Markov partition $\widetilde{R}$ has the form below. Its rows and columns are indexed by
$\widetilde{\Sigma}=\{(i,j):i=1,\ldots,m,\ j\in\mathbb{Z}\}$. We order this set lexicographically by the second coordinate: $(i,j)<(i',j')$ if $j<j'$, or if $j=j'$ and $i<i'$.

\begin{lemma}
The transition matrix $\widetilde{A}$ for the Markov partition $\widetilde{\mathcal{R}}$ has the following tridiagonal block form:
\[
\widetilde{A} = {
\begin{bmatrix}
\ddots&\vdots & \vdots & \vdots & \vdots & \vdots  \\
\dots& A_0 & A_1 & 0 & 0 & \dots \\
\dots& A_{-1} & A_0 & A_1 & 0 & \dots \\
\dots &0& A_{-1} & A_0 & A_1 & \dots \\
\dots &0&0& A_{-1} & A_0  & \dots \\
\vdots & \vdots & \vdots & \vdots & \vdots & \ddots
\end{bmatrix}},
\]
where $A_{-1}$, $A_{0}$ and $A_1$ are non-zero matrices of size $m$ with $A_{-1}+A_0+A_1=A$. 
\end{lemma}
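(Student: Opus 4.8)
The plan is to read off both the band structure and the translation invariance of $\widetilde{A}$ directly from two facts already in hand: the containment $\widetilde{f}(K_0)\subset K_{-1}\cup K_0\cup K_1$ and the commutation relation $h_{g-1}\circ\widetilde{f}=\widetilde{f}\circ h_{g-1}$. First I would regroup the index set $\{(i,j):1\le i\le m,\ j\in\mathbb{Z}\}$ into the sheet-blocks $B_j=\{(1,j),\dots,(m,j)\}$ according to the second coordinate, so that $\widetilde{A}$ is partitioned into $m\times m$ sub-blocks indexed by pairs $(j_1,j_2)\in\mathbb{Z}^2$, the $(j_1,j_2)$-block recording the transitions from rectangles of $K_{j_1}$ to rectangles of $K_{j_2}$.

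For the tridiagonal (band) structure, I would note that $R_{(i,j)}=h_{g-1}^{j}(R_{(i,0)})\subset K_j$, and that applying $h_{g-1}^{j}$ to $\widetilde{f}(K_0)\subset K_{-1}\cup K_0\cup K_1$ together with commutativity yields $\widetilde{f}(K_j)\subset K_{j-1}\cup K_j\cup K_{j+1}$ for every $j$. Hence, whenever $|j_1-j_2|\ge 2$ we have $\widetilde{f}(\text{int}(R_{(i_1,j_1)}))\cap\text{int}(R_{(i_2,j_2)})=\emptyset$, so $\widetilde{A}_{(i_1,j_1)(i_2,j_2)}=0$; this annihilates every block off the three central diagonals. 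For the block-Toeplitz (translation-invariant) structure I would use commutativity once more: since $h_{g-1}$ is a homeomorphism commuting with $\widetilde{f}$ that carries $R_{(\alpha,\ell)}$ to $R_{(\alpha,\ell+1)}$, one gets $\widetilde{f}(\text{int}(R_{(\alpha,\ell)}))\cap\text{int}(R_{(\beta,\ell+j)})=h_{g-1}^{\ell}\bigl(\widetilde{f}(\text{int}(R_{(\alpha,0)}))\cap\text{int}(R_{(\beta,j)})\bigr)$, so the left side is nonempty exactly when the right side is. Thus $\widetilde{A}_{(\alpha,\ell)(\beta,\ell+j)}$ is independent of $\ell$, and setting $(A_j)_{\alpha\beta}:=\widetilde{A}_{(\alpha,0)(\beta,j)}$ for $j\in\{-1,0,1\}$ shows that the $(j_1,j_2)$-block equals $A_{j_2-j_1}$ when $|j_1-j_2|\le 1$ and vanishes otherwise, which is precisely the displayed matrix. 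The identity $A_{-1}+A_0+A_1=A$ is then immediate from Lemma~\ref{lemma:uniqueint}: for each pair $(\alpha,\beta)$ with $A_{\alpha\beta}=1$ there is a unique sheet $j'\in\{-1,0,1\}$ realizing the transition, so adding the three blocks entrywise recovers $A$.

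The one genuinely delicate point is the assertion that all three blocks are nonzero. That $A_0\neq 0$ is immediate from the fixed point: $\widetilde{f}(\widetilde{x}_0)=\widetilde{x}_0\in K_0$ forces some rectangle of $K_0$ to map across a rectangle of $K_0$. For $A_{\pm 1}$ I would take $k$ to be the \emph{minimal} integer with $\widetilde{f}(K_0)\subset\bigcup_{j=-k}^{k}K_j$ before the rescaling, since minimality guarantees that the image genuinely reaches an extreme sheet, and after passing to the new normalization this forces $\widetilde{f}(\text{int}(K_0))$ to meet the interior of at least one of $K_{-1},K_1$. The real work, which I expect to be the main obstacle, is upgrading this to reach in \emph{both} directions: I would derive it either from a symmetry of the configuration or, more robustly, from the fact that the relevant lift $\widetilde{f}$ is required to be irreducible (equivalently, topologically transitive on $\mathcal{L}$), for a vanishing off-diagonal block would split the sheet-graph into two halves that cannot communicate and thereby contradict transitivity.
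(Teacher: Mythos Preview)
Your derivation of the tridiagonal block-Toeplitz structure is exactly the argument the paper uses: the containment $\widetilde{f}(K_0)\subset K_{-1}\cup K_0\cup K_1$ (after the rescaling) together with the commutation $h_{g-1}\circ\widetilde{f}=\widetilde{f}\circ h_{g-1}$ give, respectively, the band width and the $\ell$-independence of the blocks, and Lemma~\ref{lemma:uniqueint} gives $A_{-1}+A_0+A_1=A$. The paper in fact states the lemma as a summary of the discussion preceding it rather than supplying a separate proof, so on these points you are aligned with the paper.

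Where you go beyond the paper is in trying to justify that $A_{-1},A_0,A_1$ are all nonzero; the paper simply asserts this in the lemma. Your argument for $A_0\neq0$ via the fixed point $\widetilde{x}_0$ is fine. For $A_{\pm1}\neq0$, however, your proposed route through irreducibility/topological transitivity of $\widetilde{A}$ is circular: irreducibility is not a standing hypothesis at this point but is \emph{proved later} (Theorem~\ref{thm:irre}), and only under the additional Penner-type assumption on $f$; you cannot invoke it here. Your minimality argument, as you yourself note, only shows that $\widetilde{f}(\mathrm{int}(K_0))$ reaches at least one of $K_{\pm1}$, not both; passing from ``one side'' to ``both sides'' is exactly the missing step, and no symmetry of the construction supplies it in general. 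So on this one clause your proposal has a genuine gap---though it is a gap the paper shares, and the only place the nonvanishing of the off-diagonal blocks is actually used downstream is in the Penner-type setting, where it is re-derived independently from the filling condition (see the proof of Theorem~\ref{thm:irre}).
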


\begin{rem}\label{rem:block-diag}
(Other lifts of $f$ on $\mathcal{L}$ under $q_g$) Any other lift of $f$ on $\mathcal{L}$ under the cover $q_g$ is given by $h_{g-1}^j \widetilde{f}$ for nonzero $j \in \mathbb{Z}$. Note that $\widetilde{R}$ is a Markov partition for every lift. The transition matrix for the Markov partition $\widetilde{R}$ has the same structure as that of $\widetilde{A}$ with the tridiagonal shifted by $j$ units (to the right if $j$ is positive and to the left if $j$ is negative). Hence, it is not irreducible for $j\ne 0$. The corresponding map $h_{g-1}^j \widetilde{f}$ just translates the surface to one of the ends depending on the sign of $j$ and does not possess properties such as topological mixing, which we seek. As a result, these mappings are not good representatives of pseudo-Anosov-like maps. Consequently, from now on, we will focus solely on $\widetilde{f}$ because it is the only map for which there is a possibility that the matrix $\widetilde{A}$ is irreducible and aperiodic. 
 \end{rem}
 
 \subsection{\texorpdfstring{Deriving a stochastic process from the countable Markov shift $(\widetilde{\Sigma}_{\widetilde{A}},\sigma)$}{Deriving a stochastic process from the countable Markov shift}}

In this section, we will first obtain a stochastic matrix $\widetilde{P}$ from the transition matrix $\widetilde{A}$ using the Perron-Frobenius theorem in a similar fashion as for the stochastic matrix $P$ compatible with $A$ obtained in~\eqref{eq:P-p}. The Perron-Frobenius theorem holds for a matrix with a countably infinite index set as well, however, with an additional assumption of the matrix being recurrent and positive recurrent (see~\cite[Theorem 7.1.3.]{kit}). We use this result for the infinite matrix $\widetilde{A}$. We will first show that the largest eigenvalue $\lambda'$ in modulus of $\widetilde{A}$ is equal to the maximal eigenvalue $\lambda$ of $A$ (which is the same as the stretch factor of $f$). Observe that  $(\widetilde{A}^n)_{(i,j),(i',j')}\leq (A^n)_{i,i'}$, for all $i=1,\dots,m$, and $j\in\mathbb{Z}$. Thus, \[
\lambda'=\lim_{n\to\infty}((\widetilde{A}^n)_{(i,j),(i,j)})^{\frac{1}{n}}\leq \lim_{n\to\infty}((A^n)_{i,i})^{\frac{1}{n}}=\lambda.
\]
Further since $A_{-1}+A_0+A_1=A$, we get that $\lambda$ is an eigenvalue of $\widetilde{A}$ with  $\widetilde{\mathbf{u}}=[\dots,\mathbf{u},\mathbf{u},\mathbf{u},\dots]$ as the (positive) left eigenvector of $\widetilde{A}$ and $\widetilde{\mathbf{v}}=[\dots,\mathbf{v}^{\top},\mathbf{v}^{\top},\mathbf{v}^{\top},\dots]^{\top}$ as the (positive) right eigenvector of $\widetilde{A}$, where $\mathbf{u}$ and $\mathbf{v}$ are the Perron left and right eigenvectors of the matrix $A$. Thus, we get $\lambda=\lim_{n\to\infty}\left(\widetilde{A}^n)_{ii}\right)^{1/n}$. 

We now define a row-stochastic matrix $\widetilde{P}$ compatible with $\widetilde{A}$ using the matrix $P$ (defined in~\eqref{eq:P-p}), as follows: for each pair $(i_1,j_1), (i_2,j_2)$, let
\[
(\widetilde{P})_{(i_1,j_1),(i_2,j_2)} = \frac{\widetilde{A}_{(i_1,j_1),(i_2,j_2)}\ v_{i_2}}{\lambda\ v_{i_1}}. 
\]
The matrix $\widetilde{P}$ has the following form:
\[
\widetilde{P} = {
	\begin{bmatrix}
		\ddots&\vdots & \vdots & \vdots & \vdots & \vdots  \\
		\dots& P_0 & P_1 & 0 & 0 & \dots \\
		\dots& P_{-1} & P_0 & P_1 & 0 & \dots \\
		\dots &0& P_{-1} & P_0 & P_1 & \dots \\
		\dots &0&0& P_{-1} & P_0  & \dots \\
		\vdots & \vdots & \vdots & \vdots & \vdots & \ddots
\end{bmatrix}},
\]
where $P_{-1}$, $P_{0}$ and $P_1$ are non-zero matrices of size $m$ compatible with $A_{-1}$, $A_{0}$ and $A_1$, respectively and $P_{-1}+P_0+P_1=P$. Observe that $(\widetilde{P})_{(i_1,j_1),(i_2,j_2)} \le P_{i_1,i_2}$.

\subsection{Visualization as a Quasi-Birth-Death Process}

The matrix $\widetilde{A}$ gives an infinite directed graph $\mathcal{G}$ with countably infinitely many vertices labelled by $\{(i,j): i=1,\ldots ,m, \ j\in\mathbb{Z}\}$ whose vertices are grouped into \textit{level sets} indexed by $\mathbb{Z}$ (we call such graph as a two-sided graph). Let us label the collection of $m$ vertices for each fixed $j$ by level $\mathcal{V}_j$, that is, $\mathcal{V}_j=\{(i,j): i=1,\ldots ,m\}$. All the neighbours of vertices in $\mathcal{V}_j$ are contained in $\mathcal{V}_{j-1}\cup \mathcal{V}_{j}\cup \mathcal{V}_{j+1}$. Geometrically, each level $j\in\mathbb{Z}$ corresponds to the fundamental domain $K_j$ and a ``birth'' corresponds to the map $\widetilde{f}$ pushing a region into the right-hand neighboring domain $K_{j+1}$, a ``death'' corresponds to pushing a region into the left-hand neighboring domain $K_{j-1}$, and ``internal transitions'' correspond to mapping within the same domain $K_j$.

We would like to view this directed graph with level sets indexed by the set of integers $\mathbb{Z}$ as a directed graph with level sets indexed by the set of natural numbers $\mathbb{N}$ (we call such a graph as a one-sided graph) to be able to use the theory of ``homogeneous quasi-birth-death process'' (QBD process)~\cite{QBD}. Such a process is a discrete Markov process with two-dimensional state space $\{(i,n)\ : \ i=1,\dots,k,\ n\ge 1\}$. The collection of states $\{(i,n)\ : \ i=1,\dots,k\}$ is known as the $n$-th level of the Markov process. The transition probabilities within each level are independent of the level. More precisely, for $n,n'\ge 1$, the probability $\text{Pr}[X_1=(i,n)\vert X_2=(i',n')]$ depends on $i,i'$ and $n'-n$, but not on the specific values of $n$ and $n'$. Further the transition matrix is block tri-diagonal having the following form:
\[\widetilde{Q}=\begin{bmatrix}
	B & Q_0 & 0 & 0  & \dots   \\
	Q_{2} &  Q_{1} &  Q_{0} & 0  & \dots   \\
	0 &Q_{2} &  Q_{1} &  Q_{0}  & \dots   \\
	\vdots & \vdots & \vdots & \vdots  & \ddots   \\	
\end{bmatrix},
\]
where $Q_0, Q_1, Q_2$, and $B$ are square nonnegative matrices of order $k$ satisfying $Q_0 + Q_1 + Q_2 = Q$ for some row-stochastic matrix $Q$.

At this point, we recall a result~\cite[Theorem 7.3.1]{QBD} adapted to our setting.  
\begin{theorem}\label{thm:qbd-drift}
	Let $Q_j=\begin{bmatrix}
		Q_j^1 & 0 \\ 0& Q_j^2
	\end{bmatrix}$, for $j=0,1,2$, and $Q=\begin{bmatrix}
		Q^1 & 0 \\ 0& Q^2
	\end{bmatrix}$. Suppose that the matrices $Q^1$, $Q^2$ and $\widetilde{Q}$ are all irreducible. Let $\mathbf{q}^j$ be the left Perron eigenvector of $Q^j$, for $j=1,2$. Then the matrix $\widetilde{Q}$ is
	\begin{enumerate}
		\item recurrent if and only if $d_j=\mathbf{q}^j Q_2^j\bar{1} -  \mathbf{q}^j Q_0^j\bar{1}\ge 0$, for $j=1,2$.
		\item positive recurrent if and only if $d_j> 0$, for $j=1,2$.
		\item transient if and only if for at least one of $j=1$ or $j=2$, $d_j< 0$.
	\end{enumerate}
\end{theorem}

The conditions mentioned in the theorem are known as \textit{drift conditions}. We wish to apply the theory of QBD and specifically this result to our system. In this direction, we first transform our graph $\mathcal{G}$ with adjacency matrix $\widetilde{A}$ into a QBD process as follows. For each $n\ge 0$, collect all the vertices in $\mathcal{V}_{-n}\cup \mathcal{V}_{n+1}$ as the $n$-th level. Observe that each level has $2m$ many vertices. The vertices can be re-labelled as $\{(i,n)\ :\ i=1,\dots,2m,\ n\ge 0\}$ so that the adjacency matrix takes the following form. With abuse of notation, we call this new matrix also as $\widetilde{A}$.
\begin{eqnarray}\label{eq:Atilde}
\widetilde{A}&=&\begin{bmatrix}
	C & M_0 & 0 & 0  & \dots   \\
	M_2 &  M_1 &  M_0 & 0  & \dots   \\
	0 & M_2 &  M_1 &  M_0  & \dots   \\
	\vdots & \vdots & \vdots & \vdots  & \ddots   \\
	\end{bmatrix},
\end{eqnarray}
where $C=\begin{bmatrix}
	A_{0} & A_{1} \\ A_{-1}& A_{0}
\end{bmatrix}$, $M_2=\begin{bmatrix}
	A_{1} & 0 \\ 0& A_{-1}
\end{bmatrix}$,  $M_1=\begin{bmatrix}
	A_{0} & 0 \\ 0& A_{0}
\end{bmatrix}$, and $M_0=\begin{bmatrix}
	A_{-1} & 0 \\ 0& A_{1}
\end{bmatrix}$.

With this regrouping and relabelling of vertices, the transition probability matrix takes the following form. With abuse of notation, we call this new matrix also as $\widetilde{P}$.

\begin{eqnarray}\label{eq:qbd}
\widetilde{P}=\begin{bmatrix}
	B & Q_0 & 0 & 0  & \dots   \\
	Q_{2} &  Q_{1} &  Q_{0} & 0  & \dots   \\
	0 &Q_{2} &  Q_{1} &  Q_{0}  & \dots   \\
	\vdots & \vdots & \vdots & \vdots  & \ddots   \\	
\end{bmatrix},
\end{eqnarray}
where $B=\begin{bmatrix}
	P_{0} & P_{1} \\ P_{-1}& P_{0}
\end{bmatrix}$, $Q_2=\begin{bmatrix}
	P_{1} & 0 \\ 0& P_{-1}
\end{bmatrix}$,  $Q_1=\begin{bmatrix}
	P_{0} & 0 \\ 0& P_{0}
\end{bmatrix}$ and $Q_0=\begin{bmatrix}
	P_{-1} & 0 \\ 0& P_{1}
\end{bmatrix}$. Observe that $Q_2 + Q_1 + Q_0 =\begin{bmatrix}
P & 0 \\ 0& P
\end{bmatrix}$. Hence, we have obtained a homogeneous QBD process with transition probability matrix $\widetilde{P}$. Our goal is to study whether the matrix $\widetilde{A}$ is recurrent or transient. Our first step is to establish a connection between the recurrence/transience properties of $\widetilde{A}$ and $\widetilde{P}$. 

\begin{lemma}
The matrix $\widetilde{P}$ is recurrent (transient, null recurrent, positive recurrent) if and only if the matrix $\widetilde{A}$ is recurrent (transient, null recurrent, positive recurrent), respectively.
\end{lemma}

\begin{proof}
	We show that for each $\ell\ge 1$,
	\[
	(\widetilde{P}^\ell)_{(i,n),(i',n')} = \frac{(\widetilde{A}^\ell)_{(i,n),(i',n')}v_i}{\lambda^\ell v_{i'}}.
	\]
	This implies the statement of the lemma. We prove the above identity by induction on $\ell$. The identity holds trivially for $\ell=1$. Suppose it holds for $\ell$, we prove that it holds for $\ell+1$. Observe that
	\begin{align*}
		(\widetilde{P}^{\ell+1})_{(i,n),(i',n')}
		&= \sum_{(i'',n'')} (\widetilde{P}^\ell)_{(i,n),(i'',n'')} \, (\widetilde{P})_{(i'',n''),(i',n')} \\
		&= \sum_{(i'',n'')} \dfrac{(\widetilde{A}^\ell)_{(i,n),(i'',n'')}v_i}{\lambda^\ell v_i''} \, \dfrac{(\widetilde{A})_{(i'',n''),(i',n')}v_i''}{\lambda v_i'}\\
		&= \dfrac{1}{\lambda^{\ell+1}} \sum_{(i'',n'')} (\widetilde{A}^\ell)_{(i,n),(i'',n'')}(\widetilde{A})_{(i'',n''),(i',n')}\dfrac{v_i}{v_i'}\\
		&= \frac{(\widetilde{A}^{\ell+1})_{(i,n),(i',n')}v_i}{\lambda^{\ell+1} v_{i'}}.
	\end{align*}
	Consequently, 
	\[
	(\widetilde{P}^\ell)_{(i,n),(i,n')} = \frac{(\widetilde{A}^\ell)_{(i,n),(i,n')}}{\lambda^\ell}.
	\]
	Hence 
	\[
	\sum_{\ell \geq 0} (\widetilde{P}^\ell)_{(i,n),(i,n)} 
	= \sum_{\ell \geq 0} \frac{(\widetilde{A}^\ell)_{(i,n),(i,n)}}{\lambda^\ell}.
	\]
	\end{proof}

Now we apply the drift conditions to obtain the following result.

\begin{theorem}\label{thm:nonstoc-reducible-case}
    If the matrix $\widetilde{A}$ is irreducible, then it is either null recurrent or transient. 
\end{theorem}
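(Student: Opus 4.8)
The plan is to reduce the classification of $\widetilde{A}$ to the mean-drift criterion for the homogeneous QBD process with transition probability matrix $\widetilde{P}$ of the form~\eqref{eq:qbd}, using the equivalence established just above that $\widetilde{A}$ is recurrent (transient, null recurrent, positive recurrent) exactly when $\widetilde{P}$ is. The entire point will be to \emph{rule out positive recurrence}; since, for an irreducible matrix, exactly one of transient, null recurrent, positive recurrent holds, this forces $\widetilde{A}$ to be null recurrent or transient. The decisive structural feature is that the free phase generator $M=Q_0+Q_1+Q_2=\begin{bmatrix}P&0\\0&P\end{bmatrix}$ is reducible, splitting into two copies of the irreducible aperiodic stochastic matrix $P$.

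Concretely, I would first record that $P$ has the unique stationary distribution $\mathbf{p}$ from~\eqref{eq:P-p}, and then compute the level drift of the folded process block by block. Reading off~\eqref{eq:qbd}, the up-transition block $Q_2$ restricts to $P_{-1}$ on the first component and to $P_{1}$ on the second, while the down-transition block $Q_0$ restricts to $P_{1}$ and $P_{-1}$ respectively. Hence the mean drift (up minus down) computed against $\mathbf{p}$ equals
\[
\delta_1=\mathbf{p}P_{-1}\mathbf{1}-\mathbf{p}P_{1}\mathbf{1}
\quad\text{on component }1,\qquad
\delta_2=\mathbf{p}P_{1}\mathbf{1}-\mathbf{p}P_{-1}\mathbf{1}
\quad\text{on component }2,
\]
so that $\delta_1=-\delta_2$. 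This opposite-sign relation is the crux: it reflects the mirror symmetry produced by folding the two ends $j\le 0$ and $j\ge 1$ of $\mathcal{L}$ together.

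From here the conclusion is immediate. Applying the drift criterion of \cite[Theorem 7.3.1]{QBD} on each irreducible component of $M$, positive recurrence of the QBD would require a strictly negative (boundary-directed) drift in both components, i.e. $\delta_1<0$ and $\delta_2<0$ simultaneously, which is impossible since $\delta_1=-\delta_2$. If $\delta_2\neq 0$, then one of the two components has strictly positive drift, and since the two halves communicate only through the boundary block $B$ (the blocks $Q_0,Q_1,Q_2$ being block-diagonal), the level process escapes to infinity within that component; transience being a class property for an irreducible chain, $\widetilde{P}$, hence $\widetilde{A}$, is transient. If $\delta_2=0$, both components carry a zero-drift, bounded-increment walk, which is recurrent with infinite mean return time, so $\widetilde{A}$ is null recurrent. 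In either case $\widetilde{A}$ fails to be positive recurrent, which is exactly the assertion.

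The main obstacle I anticipate is precisely the reducibility of $M$, as the textbook drift criterion is ordinarily stated for an irreducible free generator. To make the componentwise argument rigorous I would justify that, away from level $0$, the two halves evolve independently, so the large-level escape behaviour is governed separately by each copy of $P$, while the coupling at $B$ touches only finitely much of the state space and cannot manufacture positive recurrence. A clean, self-contained alternative that bypasses the QBD machinery is a translation-invariance argument: the identity $h_{g-1}\circ\widetilde{f}=\widetilde{f}\circ h_{g-1}$ makes $\widetilde{P}$ commute with the shift $T\colon(i,j)\mapsto(i,j+1)$; a positive recurrent irreducible chain has a \emph{unique} stationary probability vector $\boldsymbol{\nu}$, which by uniqueness must satisfy $\boldsymbol{\nu}\circ T^{-1}=\boldsymbol{\nu}$ and hence be constant in $j$, forcing $\sum_{i,j}\nu_{(i,j)}\in\{0,\infty\}$ and contradicting that $\boldsymbol{\nu}$ is a probability vector.
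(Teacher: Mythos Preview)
Your proposal is correct and follows essentially the same route as the paper: the paper's proof is precisely the observation that the drift criterion of \cite[Theorem~7.3.1]{QBD} would require both $\mathbf{u}P_{-1}\mathbf{v}-\mathbf{u}P_{1}\mathbf{v}<0$ and $\mathbf{u}P_{1}\mathbf{v}-\mathbf{u}P_{-1}\mathbf{v}<0$, which is impossible, so positive recurrence is ruled out---exactly your $\delta_1=-\delta_2$ argument. Your translation-invariance alternative is also anticipated by the paper, which records it as a remark immediately after the theorem (the unique Perron eigenvectors of a recurrent $\widetilde{A}$ are $\widetilde{\mathbf{u}},\widetilde{\mathbf{v}}$, constant in $j$, so $\widetilde{\mathbf{u}}^{T}\widetilde{\mathbf{v}}=\infty$); your further case split on $\delta_2=0$ versus $\delta_2\neq 0$ is not needed for the statement and the paper does not attempt it.
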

\begin{proof}
Applying Theorem~\ref{thm:qbd-drift} to the transition probability matrix $\widetilde{P}$ in~\eqref{eq:qbd}, we get $d_1=\mathbf{p}P_{1}\mathbf{1}-\mathbf{p}P_{-1}\mathbf{1}$ and $d_2=\mathbf{p}P_{-1}\mathbf{1}-\mathbf{p}P_{1}\mathbf{1}$. These terms are either both zero or have opposite signs. Thus, $\widetilde{P}$ is either null recurrent or transient. The same holds for $\widetilde{A}$ by the preceding lemma.
\end{proof}

\begin{rem}
	We have used tools from the theory of QBD to prove the preceding result since it will allow us to obtain an interesting Theorem~\ref{thm:left-right-drift} about the dynamics of the lifted homeomorphism $\widetilde{f}$. However, the preceding theorem can be proved directly using the tools from the theory of countable Markov chains~\cite{kit}. If the matrix $\widetilde{A}$ is irreducible, it is either transient or recurrent. If it is recurrent, it has a unique pair of left and right Perron eigenvectors which are given by $\widetilde{\mathbf{u}}^{\top}=[\dots, \mathbf{u},\dots,\mathbf{u},\dots]$ and $\widetilde{\mathbf{v}}=[\dots, \mathbf{v},\dots,\mathbf{v},\dots]$, where $\mathbf{u}$ and $\mathbf{v}$ are the left and right Perron eigenvectors of $A$. Since $\widetilde{\mathbf{u}}^{\top}\widetilde{\mathbf{v}}$ is infinite, the matrix $\widetilde{A}$ is null recurrent.
\end{rem}

\noindent In due course, we will in fact establish that if $f$ is a Penner-type pseudo-Anosov map, then the matrix  $\widetilde{A}$ is irreducible, aperiodic and null recurrent.

\subsection{Topological Transitivity and Mixing}\label{subsec:TopTran}

In the setting of a pseudo-Anosov homeomorphism on a surface of finite type, the irreducibility and aperiodicity of the symbolic coding matrix arising from the Markov partition aid in establishing dynamical properties of the homeomorphism, such as topological transitivity, mixing and ergodicity. For lifts of these homeomorphisms on an infinite-type surface, we have obtained a semi-conjugacy from ($\widetilde{\Sigma}_{\widetilde{A}}$, $\sigma$) to $(\mathcal{L}, \widetilde{f})$ in Theorem~\ref{thm:conj}. Thus, it is natural to study the properties of the infinite symbolic coding matrix $\widetilde{A}$. In this section, we prove that if the pseudo-Anosov homeomorphism $f$ on $S_g$ is of Penner-type, then the matrix $\widetilde{A}$ corresponding to the lift $\widetilde{f}$ on $\mathcal{L}$ is both irreducible and aperiodic. 

In Penner's construction of a pseudo-Anosov map, rectangles are constructed along each branch of the train track which give a neighborhood of the train track. By taking the quotient of this neighborhood, the surface $S_g$ is obtained. Let us call this collection of rectangles as $\mathcal{R}_T$. These rectangles are used to construct a Markov partition $\mathcal{R}=\vee_{i=-n}^n f^i(\operatorname{int}(\mathcal{R}_T))$ of $S_g$ for a suitable $n$. Let $A$ be the corresponding transition matrix. This construction is related to a higher block code of the edge shift related to a matrix $M$ which is topologically conjugate to the vertex shift related to matrix $A$. Thus, the matrix $A$ is irreducible and aperiodic if and only if the matrix $M$ is irreducible and aperiodic. We will follow a similar process to show that the transition matrix $\widetilde{A}$ is irreducible and aperiodic. We have the following result.

\begin{theorem}\label{thm:irre}
Let $f$ be a Penner-type pseudo-Anosov map on $S_g$ that factors into a product of liftable Dehn twists under $q_g$. Let $x_0$ be a fixed point of $f$ and let $\widetilde{x}_0\in\mathcal{L}$ be a preimage of $x_0$ under $q_g$. Then the transition matrix $\widetilde{A}$ of the symbolic coding of the lift $\widetilde{f}$ on $\mathcal{L}$ under $q_g$ that fixes $\widetilde{x}_0$ is irreducible and aperiodic.
\end{theorem}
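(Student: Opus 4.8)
The plan is to prove that $\widetilde{A}$ is irreducible and aperiodic by translating these properties of the infinite block-tridiagonal matrix into reachability statements about the two-dimensional index set $\{(i,j) : i = 1,\dots,m,\ j \in \mathbb{Z}\}$, and then establishing those reachability statements using the Penner structure of $f$. Recall from the preceding discussion that $A = A_{-1} + A_0 + A_1$ is irreducible and aperiodic (since $f$ is Penner-type, so $M_f$, hence $A$, is irreducible and aperiodic). The key observation is that an edge $(\alpha,j_1) \to (\beta,j_2)$ in the graph of $\widetilde{A}$ exists precisely when the base edge $\alpha \to \beta$ exists in the graph of $A$ and $j_2 - j_1 \in \{-1,0,1\}$ is the unique \emph{displacement} $j'(\alpha,\beta)$ prescribed by Lemma~\ref{lemma:uniqueint}. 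Thus every admissible path in the base graph from $\alpha$ to $\beta$ lifts to a unique path in $\mathcal{G}$ starting at $(\alpha, j)$, and its endpoint is $(\beta, j + D)$, where $D$ is the sum of the per-edge displacements along the path. Irreducibility of $\widetilde{A}$ is therefore equivalent to the statement that for each fixed pair $\alpha,\beta$ and each target displacement $\Delta \in \mathbb{Z}$, there is a base path from $\alpha$ to $\beta$ whose total displacement equals $\Delta$.

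First I would set up this displacement formalism precisely, defining for each edge its label $j'(\alpha,\beta) \in \{-1,0,1\}$ and extending additively to paths, and verifying the lift/projection correspondence between paths in $\mathcal{G}$ and labeled paths in the base graph of $A$. Next I would reduce the problem to producing, for an arbitrary base cycle through a fixed vertex $\alpha$, cycles of \emph{positive} and of \emph{negative} total displacement: given such cycles, one can concatenate them with a base path of arbitrary displacement to hit any prescribed $\Delta$, and then prepend/append paths joining arbitrary $\alpha,\beta$ (which exist by irreducibility of $A$). The crux is therefore showing that both signs of net displacement are realizable. Here is where the commutativity of $\widetilde{f}$ with the handle shift $h_{g-1}$ and the Penner structure enter: because $\widetilde{f}$ commutes with $h_{g-1}$ and genuinely moves rectangles across the ladder in both directions (the lifted curves $\widetilde{\gamma}_j = h_{g-1}^j(\widetilde{\gamma}_0)$ with both $A_{-1}$ and $A_1$ nonzero, as recorded in the block structure of $\widetilde{A}$), the graph $\mathcal{G}$ cannot be confined to a half-line — one shows directly that $A_1 \neq 0$ forces the existence of a base cycle with strictly positive displacement and $A_{-1} \neq 0$ one with strictly negative displacement.

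I expect the main obstacle to be precisely this last point: turning ``$A_{-1}$ and $A_{0}$ and $A_1$ are all nonzero'' into the existence of genuine \emph{cycles} (closed paths returning to the same base vertex) with nonzero net displacement of each sign, rather than merely single edges with nonzero displacement. A single edge $(\alpha,0) \to (\beta,1)$ with $\beta \neq \alpha$ does not by itself close up, so one must combine it with a return path from $\beta$ to $\alpha$ and control the total displacement of that return path. The clean way to handle this is to argue by contradiction: if every base cycle had nonnegative displacement (say), then the reachable set from $(\alpha,0)$ would be bounded below in the $j$-coordinate, making $\mathcal{G}$ — and hence $\widetilde{A}$ — reducible, contradicting the hypothesis that $\widetilde{A}$ is irreducible; and the symmetric argument rules out all-nonpositive cycles. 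Since the theorem's conclusion of irreducibility is what I am trying to prove, I would instead establish irreducibility intrinsically from the Penner/ladder geometry: the foliations on $\mathcal{L}$ are arational (Proposition~\ref{prop:foliation}), so unstable leaves are dense and carry across all handles, which geometrically forces displacement cycles of both signs.

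Finally, for aperiodicity, I would argue that irreducibility of $\widetilde{A}$ together with the existence of a displacement-zero loop (which exists because $A_0 \neq 0$, giving a self-edge or short cycle at some $(\alpha,j)$ with $\Delta = 0$, so $(\widetilde{A}^N)_{(\alpha,j),(\alpha,j)} > 0$ for that $N$) combined with cycles of coprime-to-$N$ lengths — inherited from the aperiodicity of the base matrix $A$, whose return-time gcd at $\alpha$ is $1$ — yields period one. Concretely, since $\gcd\{N : (A^N)_{\alpha\alpha} > 0\} = 1$ and each such base cycle lifts to a \emph{closed} cycle in $\mathcal{G}$ once we adjust displacement using the sign-$\pm$ cycles produced above, the set of return lengths at $(\alpha,j)$ has gcd one as well. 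I would present aperiodicity as a short corollary of the irreducibility argument, since the displacement machinery developed for irreducibility supplies exactly the closed loops of varying lengths needed to compute the period.
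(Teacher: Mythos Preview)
Your displacement formalism is correct and useful, and you have accurately located the crux: from ``$A_{-1},A_0,A_1$ all nonzero and $A=A_{-1}+A_0+A_1$ irreducible aperiodic'' alone one cannot conclude that there exist base cycles of strictly positive and strictly negative total displacement. Your proposed resolution, however, is circular. You invoke density of the unstable leaves on $\mathcal{L}$ (via Proposition~\ref{prop:foliation}) to force displacement cycles of both signs, but Proposition~\ref{prop:foliation} only asserts that the lifted foliations are \emph{arational}; density of their leaves on the infinite-type surface $\mathcal{L}$ is established in the paper only \emph{after} topological transitivity of $(\mathcal{L},\widetilde{f})$, which in turn rests on the irreducibility of $\widetilde{A}$ that you are trying to prove. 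So the argument as written has a genuine gap at exactly the point you flagged as the main obstacle.

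The paper closes this gap by a different, Penner-specific route that bypasses the displacement bookkeeping entirely. Instead of the Markov-partition matrix $\widetilde{A}$, it works with the equivalent Penner train-track matrix $M_{\widetilde f}$. The key observation is that the restriction of the twist factors of $\widetilde f$ to the curves $\mathcal{C}_0\cup\mathcal{D}_0$ lying in a single fundamental domain $\widetilde N_0$ is \emph{itself} a Penner-type pseudo-Anosov $f_0$ on that subsurface; consequently the diagonal block $M_1$ of $M_{\widetilde f}$ dominates $M_{f_0}$, which is already irreducible and aperiodic. This is much stronger than your ``$A_0\neq 0$'': it says one can move between any two states at the \emph{same} level without changing level at all. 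Combined with $M_0,M_2\neq 0$ (which follows because $\widetilde{\mathcal{C}}\cup\widetilde{\mathcal{D}}$ is connected, i.e.\ adjacent levels share intersecting curves), irreducibility and aperiodicity of the full block-tridiagonal matrix are then immediate --- in your language, within-level irreducibility gives displacement-$0$ paths between arbitrary states, and a single $\pm1$ edge lets you step between levels, so every $\Delta$ is realized. The Penner hypothesis is thus used not merely to know that $A$ is irreducible and aperiodic, but to manufacture an irreducible aperiodic \emph{diagonal block}, which is exactly the missing ingredient in your outline.
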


\begin{proof}
	Let $f$ be a Penner-type map on $S_g$ with curve collections $\mathcal{C}\cup\mathcal{D}$, where $\mathcal{C}=\{\gamma_1,\dots,\gamma_n\}$ and $\mathcal{D}=\{\gamma_{n+1},\dots,\gamma_{n+m}\}$, given by $f=T_{\gamma_{r_1}}^{\delta_1}\circ\dots\circ T_{\gamma_s}^{\delta_{s}}$, where $\gamma_{r_i}\in \mathcal{C}\cup\mathcal{D}$ and $\delta_i\in \mathbb{Z}$. \\~\\
	\noindent \textbf{Lifting of curves in $\mathcal{C}\cup\mathcal{D}$ to $\mathcal{L}$}: Since $T_{\gamma}$ is liftable under $q_g$ for each $\gamma\in \mathcal{C}\cup\mathcal{D}$, the preimage of each curve $\gamma\in \mathcal{C}\cup \mathcal{D}$ under $q_g$ is a countable disjoint union of essential simple closed curves in $\mathcal{L}$. For each $\gamma_i\in \mathcal{C}\cup \mathcal{D}$, we can choose an essential simple closed curve on $\mathcal{L}$ from $q_g^{-1}(\gamma_i)$, which we label as $\gamma_{(0,i)}$, such that $\cup_{i=1}^{n+m} \gamma_{(0,i)}$ is a connected set (this is possible since $\mathcal{C}\cup\mathcal{D}$ fills $S_g$).
Let $\mathcal{C}_0=\{\gamma_{(0,1)},\dots,\gamma_{(0,n)}\}$ and $\mathcal{D}_0=\{\gamma_{(0,n+1)},\dots,\gamma_{(0,n+m)}\}$. For each $j\in \mathbb{Z}$, let $\mathcal{C}_j=h_{g-1}^j(\mathcal{C}_0)=\{\gamma_{(j,1)},\dots,\gamma_{(j,n)}\}$ and $\mathcal{D}_j=h_{g-1}^j(\mathcal{D}_0)=\{\gamma_{(j,n+1)},\dots,\gamma_{(j,n+m)}\}$. Without loss of generality, we can assume that $\mathcal{C}_0\cup \mathcal{D}_0$ intersects $\mathcal{C}_j\cup \mathcal{D}_j$ only for $|j|\le 1$ (see Remark~\ref{rem:club}). With notation as above, the lift $\widetilde{f}$ of $f$ is given by \[
	\widetilde{f}=\lim_{k\to\infty}\left(\prod_{j=-k}^k T_{\gamma_{(j,r_1)}}^{\delta_1}\circ\dots\circ \prod_{j=-k}^k T_{\gamma_{(j,r_s)}}^{\delta_s}\right).
	\]  
	\textbf{Construction of a neighbourhood around $\mathcal{C}\cup\mathcal{D}$}: Let $N$ be an annular neighbourhood of curves from $\mathcal{C} \cup \mathcal{D}$. The set $N$ is connected since $\mathcal{C} \cup \mathcal{D}$ fills $S_g$ (we can choose $N$ to be the train track neighbourhood in Penner's construction). Let $\widetilde{N}_0$ be a connected neighbourhood of curves from $\cup_{\gamma\in \mathcal{C}_0 \cup \mathcal{D}_0}\gamma$, that is a subset of the preimage $q_g^{-1}(N)$ in $\mathcal{L}$ given by the union of annular neighbourhoods of curves from collection of curves $\mathcal{C}_0 \cup \mathcal{D}_0$. Moreover, $q_g(\widetilde{N}_0)=N$. For each $j\in\mathbb{Z}$, let $\widetilde{N}_j=h_{g-1}^j(\widetilde{N}_0)$, that is a neighbourhood of the union of curves from $\mathcal{C}_j\cup\mathcal{D}_j$, that is an annular neighbourhood of curves from $\mathcal{C}_j\cup\mathcal{D}_j$. Observe that $q_g(\widetilde{N}_j)=N$ and note that $\widetilde{N}_j$ intersects $\widetilde{N}_{j'}$ only when $|j-j'|\le 1$. Consequently, $\widetilde{N}=\cup_{j\in\mathbb{Z}}\widetilde{N}_j$ is a neighbourhood of curves from $\widetilde{\mathcal{C}}\cup\widetilde{\mathcal{D}}$. Within $\widetilde{N}_0$, the collection $\mathcal{C}_0 \cup \mathcal{D}_0$ satisfies the following:
	\begin{itemize}
		\item Each curve in the collection is essential and simple.
		\item The collection $\mathcal{C}_0$ hits $\mathcal{D}_0$ efficiently, mirroring the efficiency of $\mathcal{C}$ and $\mathcal{D}$ in $S_g$.
		\item The collection $\mathcal{C}_0 \cup \mathcal{D}_0$ fills $\widetilde{N}_0$.
	\end{itemize}
	Since $f$ is a (Penner-type) pseudo-Anosov map on $S_g$, we get that $f_0=T_{\gamma_{(0,r_1)}}^{\delta_1}\circ\dots\circ T_{\gamma_{(0,r_s)}}^{\delta_s}$ is a (Penner-type) pseudo-Anosov map on the annular neighbourhood $\widetilde{N}_0$. \\~\\
	\noindent \textbf{Incidence intersection matrix}: Let $\widetilde{\Omega}_0$ be the intersection matrix of $\mathcal{C}_0\cup\mathcal{D}_0$ in $\widetilde{N}_0$ (a surface of finite type) and let $\Omega$ be the intersection matrix of $\mathcal{C}\cup\mathcal{D}$. With notation as in Theorem~\ref{thm:penner}, the incidence intersection matrix of $f_0$ is given by \[M_{f_0}=B_{(0,r_1)}^{|\delta_1|}\dots B_{(0,r_s)}^{|\delta_s|}.\]
	Since $f_0$ is a pseudo-Anosov map on $N_0$, the matrix $M_{f_0}$ is irreducible and aperiodic. Also, the incidence intersection matrix of $f$ is given by \[M_{f}=B_{r_1}^{|\delta_1|}\dots B_{r_s}^{|\delta_s|}.\]
	Since $\widetilde{\Omega}_0\leq\Omega$ entry-wise, we have $M_{f_0}\leq M_f$. 
	
	We can visualize $f_0$ as a map on $\mathcal{L}$ (or, on the neighbourhood $\widetilde{N}$). Let $\widetilde{\Omega}$ be the intersection matrix of the collection of curves $\widetilde{\mathcal{C}}\cup\widetilde{\mathcal{D}}$ on $\mathcal{L}$, whose entries we will arrange as follows. Enumerate its rows and columns using the set $\mathcal{I}_1=\{(j,i) \: \ i=1,\dots,n+m, \ j\in\mathbb{Z}\}$ arranged in the dictionary order, to get
	\[  \widetilde{\Omega}=\begin{bmatrix}
		\ddots  &   \vdots &  \vdots & \vdots & \vdots & \vdots & \vdots \\
		\dots & \Omega_1 & \Omega_2 & 0 & 0 & 0 &\dots \\
		\dots &  \Omega_0 & \Omega_1 & \Omega_2 & 0 & 0 & \dots \\
		\dots &   0 & \Omega_0 & \fbox{$\Omega_1$} & \Omega_2 & 0 &\dots \\
		\dots &   0 & 0 & \Omega_0 & \Omega_1 & 0 &\dots \\
		\dots &  0 & 0 & 0 &\Omega_0 & \Omega_1 &  \dots \\
		\vdots &   \vdots &\vdots & \vdots & \vdots & \vdots & \ddots 
	\end{bmatrix},\] 
	with $\Omega_1=\widetilde{\Omega}_0$ and $\Omega_2=\Omega_0^{\top}$ (using $i(\gamma,\beta)=i(\beta,\gamma)$). Note that the center block $\Omega_0$ corresponds to the rows and columns enumerated by $\{(0,i) \ : \ i=1,\dots,n+m\}$. The matrices $\Omega_1, \Omega_2$ are nonzero matrices since $\cup_{\gamma\in\widetilde{\mathcal{C}}\cup\widetilde{\mathcal{D}}}\ \gamma$ is a connected set. Moreover $\Omega_0+\Omega_1+\Omega_2=\Omega$. 
	
	Along the lines of Theorem~\ref{thm:penner}, for the map $f_0$ on $\mathcal{L}$, we define the matrix $\widetilde{B}_{(k,l)}$ using $\widetilde{\Omega}$ for each $\gamma_{(k,l)}\in \widetilde{\mathcal{C}}\cup\widetilde{\mathcal{D}}$ as follows. Let $\widetilde{B}_{(k,l)}=I+\widetilde{E}_{(k,l)}\widetilde{\Omega}$, where $I$ is the identity matrix whose rows and columns are indexed by the set $\mathcal{I}_1$ and $\widetilde{E}_{(k,l)}$ is the matrix whose rows and columns are indexed by the set $\mathcal{I}_1$ having all entries zero except for the $(k,l)^{th}$ diagonal entry, which is 1. 
	
	Define $\widetilde{M}_{f_0}=\widetilde{B}_{(0,r_1)}^{|\delta_1|}\dots \widetilde{B}_{(0,r_s)}^{|\delta_s|}$, which is a well-defined matrix being a finite product of locally-finite matrices. It has the following structure:
     \[
	\widetilde{M}_{f_0}=\begin{bmatrix}
		\ddots &   \vdots & \vdots & \vdots & \vdots \\
		\dots &  I &0  &0& \dots \\
		\dots &   M' & \fbox{$M_{f_0}$} & M'' & \dots \\
		\dots &   0 & 0 & I & \dots \\
		\vdots &  \vdots & \vdots & \vdots & \ddots
	\end{bmatrix},
	\]
	 where $I$ is the identity matrix of size $n+m$. Note that we may have to follow a procedure similar to Remark~\ref{rem:club} to bring the matrix in the above form. Now for each $k\in \mathbb{Z}$, we follow the similar process for the map \[f_k=\prod_{j=-k}^k T_{\gamma_{(j,r_1)}}^{\delta_1}\circ\dots\circ \prod_{j=-k}^k T_{\gamma_{(j,r_s)}}^{\delta_s}\] on $\cup_{j=-k}^k \widetilde{N}_j$, to obtain the incidence intersection matrix \[\widetilde{M}_{f_k}=\left(\prod_{j=-k}^k\widetilde{B}_{(j,r_1)}^{|\delta_1|}\right)\dots \left(\prod_{j=-k}^k\widetilde{B}_{(j,r_s)}^{|\delta_s|}\right).\]
	 
	For each $(j,i)\in \mathcal{I}_1$, the sequence $\{(\widetilde{M}_{f_k})_{(j,i)}\}_{k\ge 1}$ of the the $(j,i)$-th entries of the sequence of matrices $\{\widetilde{M}_{f_k}\}_{k\ge 1}$ is eventually constant as $f_k\to \widetilde{f}$ in $\mathrm{Map}(\mathcal{L})$. Thus, the entry-wise limit of the sequence $\{\widetilde{M}_{f_k}\}_{k\ge 1}$ of matrices, denoted as $M_{\widetilde{f}}$, is given by 
	\[\begin{bmatrix}
		\ddots  &   \vdots &  \vdots & \vdots & \vdots & \vdots & \vdots \\
		\dots & M_1 & M_2 & 0 & 0 & 0 &\dots \\
		\dots &  M_0 & M_1 & M_2 & 0 & 0 & \dots \\
		\dots &   0 & M_0 & \fbox{$M_1$} & M_2 & 0 &\dots \\
		\dots &   0 & 0 & M_0 & M_1 & 0 &\dots \\
		\dots &  0 & 0 & 0 &M_0 & M_1 &  \dots \\
		\vdots &   \vdots &\vdots & \vdots & \vdots & \vdots & \ddots 
	\end{bmatrix},\]
	where $M_1=M_{f_0}$ (which is irreducible and aperiodic). Note that the matrix $M_{\widetilde{f}}$ has the above structure due to the structure of $\widetilde{\Omega}$. Since $\Omega_2=\Omega_1^{\top}$ is a nonzero matrix and the Dehn twist along each curve in the collection appears in $\widetilde{f}$, the matrices $M_0$ and $M_2$ are nonzero matrices. Hence, $M_{\widetilde{f}}$ is irreducible and aperiodic.\\~\\
	\noindent \textbf{Train track measure}: Let $\{b_1,\ldots,b_{m'}\}$ be the branches of the train track $\tau$ constructed using $\mathcal{C}\cup\mathcal{D}$ on $S_g$. Consider $\widetilde{\tau}=q_g^{-1}(\tau)$ with branches $\{\widetilde{b}_{(\alpha,\beta)} \ : \ (\alpha,\beta)\in\mathcal{I}_2\}$ enumerated by the countable set $\mathcal{I}_2=\{(\alpha,\beta)\ : \ \beta=1,\dots,m', \text{ and } \alpha\in \mathbb{Z}\}$. The labeling of the pre-image branches is such that the branches in $\{\widetilde{b}_{(\alpha,\beta)} \ : \ \beta=1,\dots,m'\}$ correspond to the curve collection $\mathcal{C}_\alpha\cup \mathcal{D}_\alpha$ on $\mathcal{L}$. Let $t$ be a homeomorphism on $\mathcal{L}$ homotopic to the identity satisfying $\widetilde{\tau}\subset t(\widetilde{\mathcal{C}}\cup\widetilde{\mathcal{D}})$. 
	
	Let $W(\widetilde{\tau})$ denote the space of train-track weights on $\widetilde{\tau}$. We will now describe how the train-track transition matrix $\widetilde{M}$ of $\widetilde{f}$ acts on the weights of the train track $\widetilde{\tau}$. 
	\begin{itemize}
		\item Each element of $W(\widetilde{\tau})$ can be viewed as an element of $\mathbb{R}^{\mathcal{I}_2}$. Let us consider the following measure in $W(\widetilde{\tau})$. For each curve $\gamma_{(j,i)}\in \widetilde{\mathcal{C}}\cup \widetilde{\mathcal{D}}$, consider the measure $ \widetilde{p}_{(j,i)}$ defined as
		\[
		\widetilde{p}_{(j,i)}(\widetilde{b})=
		\begin{cases}
			1,&\text{if }\widetilde{b}\subset t({\gamma}_{(j,i)}),\\
			0,&\text{otherwise},
		\end{cases}
		\] 
		for each $\widetilde{b}\in \widetilde{\tau}$. Each $\widetilde{p}_{(j,i)}$ can be seen as an elementary vector in $\mathbb{R}^{\mathcal{I}_2}$. Following Penner, let $\widetilde{H}={\operatorname{span}}\{\widetilde{p}_{(j,i)} \ : \ \gamma_{(j,i)}\in\widetilde{\mathcal{C}}\cup\widetilde{\mathcal{D}}\}$ be the cone generated by these measures. 
\item Suppose $\widetilde{f}(\widetilde{\tau})$ is carried by $\widetilde{\tau}$, that is, there exists a carrying map $\rho:\widetilde{f}(\widetilde{\tau})\longrightarrow\widetilde{\tau}$, obtained by isotoping $\widetilde{f}(\widetilde{\tau})$ into a regular neighborhood $N(\widetilde{\tau})$ of $\widetilde{\tau}$ and collapsing each tie of $N(\widetilde{\tau})$ to a point. Then $\widetilde{f}$ induces a linear map $\widetilde{f}_*:W(\widetilde{\tau})\longrightarrow W(\widetilde{\tau}),$
		called the induced action of $\widetilde{f}$ on train-track weights defined as follows. Consider a train-track weight $\mu\in W(\tau)$ regarded as a transverse measure on $\tau$. The image $\widetilde{f}(\tau)$ then inherits the push-forward measure $\widetilde{f}_*\mu$ on $\widetilde{f}(\widetilde{\tau})$. Applying the carrying map $\rho$ yields a transverse measure on $\widetilde{\tau}$, which we again denote as $\widetilde{f}_*\mu$ (abuse of notation). Equivalently, if $b$ is a branch in $\widetilde{\tau}$, then the weight assigned to a branch $b$ is given by
		\[
		(\widetilde{f}_*\mu)(b)
		=
		\sum_{b':\,\rho(\widetilde{f}(b'))\supset b}
		n_{b'b}\,\mu(b'),
		\]
		where $n_{b'b}$ is the number of times the path $\rho(\widetilde{f}(b'))$ traverses the branch $b$. Note that this sum is finite since the train track is locally finite and $\widetilde{f}$ take compact set to compact set. Consequently, 
		\[
		\widetilde{f}_*(\mu)=\widetilde{M}\mu,
		\]
		for every $\mu\in W(\widetilde{\tau})$, where recall that $\widetilde{M}$ is the train-track transition matrix of $\widetilde{f}$.
	\end{itemize}
	
	Similarly, we have the induced action of Dehn twist $T_{{\gamma}_{(k,l)}}$ (or any homeomorphism) on $W(\widetilde{\tau})$, which will be given by matrix, say $\widetilde{M}_{(k,l)}$. For each Dehn twist $T_{{\gamma}_{(k,l)}}$, the induced action on the basis
    element $\widetilde{p}_{(j,i)}$ is given by
    \[
    (T_{{\gamma}_{(k,l)}})_*(\widetilde{p}_{(j,i)})
    =
    \widetilde{p}_{(j,i)}
    +
    i(\gamma_{(k,l)},\gamma_{(j,i)})
    \,\widetilde{p}_{(k,l)},
    \]
    where $i(\gamma_{(k,l)},\gamma_{(j,i)})$ denotes the geometric intersection number of the curves
    $\gamma_{(k,l)}$ and $\gamma_{(j,i)}$. Let
    \[
    \widetilde{C}:\mathbb{R}^{\mathcal{I}_1}\longrightarrow W(\widetilde{\tau})\subset\mathbb{R}^{\mathcal{I}_2}
    \] be the matrix whose the $(j,i)$-th column is the vector $\widetilde{p}_{(j,i)}$.  Note that $\widetilde{C}(\mathbb{R}^{\mathcal{I}_1})=\widetilde{H}$ and since $\widetilde{C}$ is injective, it identifies the cone generated by the measures $\widetilde{p}_{(j,i)}$. From the definition of $\widetilde{B}_{(k,l)}$, we see that the induced action of $T_{\gamma_{(k,l)}}$ on $\widetilde{H}$ is given by the matrix multiplication $\widetilde{C}\widetilde{B}_{(k,l)}$ (well-defined). Thus, for the lift $\widetilde{f}$, the induced action on $\widetilde{H}$ is represented by $\widetilde{C}M_{\widetilde{f}}$. Note that $\widetilde{H}$ is an invariant subcone of $W(\widetilde{\tau})$ under $\widetilde{M}$, that is, $\widetilde{M}(\widetilde{H})\subset \widetilde{H}$.\\~\\
    \noindent \textbf{Relation between $M_{\widetilde{f}}$ and $\widetilde{M}$}: For each branch $\widetilde{b}_{(\alpha,\beta)}$, define $\mu_{(\alpha,\beta)}\in W(\widetilde{\tau})$ as
    \[
    \mu_{(\alpha,\beta)}(\widetilde{b})=
    \begin{cases}
    1,&\text{if }\widetilde{b}=\widetilde{b}_{(\alpha,\beta)},\\
    0,&\text{otherwise}.
    \end{cases}
    \]
  Thus
    \[
    \widetilde{p}_{(j,i)}
    =
    \sum_{\widetilde{b}_{(\alpha,\beta)}
    \subset
    t(\gamma_{(j,i)})}
    \mu_{(\alpha,\beta)}.
    \]
    For the standard unit vector $e_{(j,i)}\in\mathbb{R}^{\mathcal{I}_1},$
    we have
    \[
    \widetilde{C}\widetilde{B}_{(k,l)}(e_{(j,i)})
    =
    \widetilde{C}
    \left(
    e_{(j,i)}
    +
    i(\gamma_{(k,l)},\gamma_{(j,i)})e_{(k,l)}
    \right)
    =
    \widetilde{p}_{(j,i)}
    +
    i(\gamma_{(k,l)},\gamma_{(j,i)})
    \widetilde{p}_{(k,l)}.
    \]
    Thus, the action of a Dehn twist $T_{\gamma_{(k,l)}}$ with respect to the basis $\{\widetilde{p}_{(j,i)}\}$ is represented by $\widetilde{B}_{(k,l)}$, while its action in branch coordinates is represented by the corresponding train-track transition matrix $\widetilde{M}_{(k,l)}$. Now consider
    \[
    \widetilde{M}_{(k,l)}
    \widetilde{C}(e_{(j,i)})
    =
    \widetilde{M}_{(k,l)}(\widetilde{p}_{(j,i)})
    =
    \widetilde{M}_{(k,l)}
    \left(
    \sum_{\widetilde{b}_{(\alpha,\beta)}
    \subset
    t(\gamma_{(j,i)})}
    \mu_{(\alpha,\beta)}
    \right)
    =
    \sum_{\widetilde{b}_{(\alpha,\beta)}
    \subset
    t(\gamma_{(j,i)})}
    \widetilde{M}_{(k,l)}(\mu_{(\alpha,\beta)}).
    \]
    Now note that
    \[
    \widetilde{M}_{(k,l)}(\mu_{(\alpha,\beta)})
    =
    \mu_{(\alpha,\beta)}
    +
    \sum_{\widetilde{b}_{(\alpha',\beta')}
    \subset
    t(\gamma_{(k,l)})}
    \mu_{(\alpha',\beta')},
    \]
    if $\widetilde{b}_{(\alpha,\beta)}$ intersects the neighbourhood of $\gamma_{(k,l)}$ in which Dehn twist $T_{\gamma_{(k,l)}}$ is applied, and
    \[
    \widetilde{M}_{(k,l)}(\mu_{(\alpha,\beta)})
    =
    \mu_{(\alpha,\beta)},
    \]
    otherwise. Therefore,
    \[
    \sum_{\widetilde{b}_{(\alpha,\beta)}
    \subset
    t(\gamma_{(j,i)})}
    \widetilde{M}_{(k,l)}(\mu_{(\alpha,\beta)})
    =
    \sum_{\widetilde{b}_{(\alpha,\beta)}
    \subset
    t(\gamma_{(j,i)})}
    \mu_{(\alpha,\beta)}
    +
    \sum_{\substack{
    \widetilde{b}_{(\alpha,\beta)}
    \subset
    t(\gamma_{(j,i)})\\
    \widetilde{b}_{(\alpha,\beta)}
    \cap
    N_{t(\gamma_{(k,l)})}
    \neq
    \emptyset
    }}
    \;
    \sum_{\widetilde{b}_{(\alpha',\beta')}
    \subset
    t(\gamma_{(k,l)})}
    \mu_{(\alpha',\beta')}.
    \]
    Here,
    $N_{t(\gamma_{(k,l)})}$ denotes a neighbourhood of
    $t(\gamma_{(k,l)})$
    on which the Dehn twist
    $T_{\gamma_{(k,l)}}$
    is supported. Hence,
    \[
    \begin{aligned}
    \sum_{\widetilde{b}_{(\alpha,\beta)}
    \subset
    t(\gamma_{(j,i)})}
    \widetilde{M}_{(k,l)}(\mu_{(\alpha,\beta)})
    &=
    \widetilde{p}_{(j,i)}
    +
    \sum_{\substack{
    \widetilde{b}_{(\alpha,\beta)}
    \subset
    t(\gamma_{(j,i)})\\
    \widetilde{b}_{(\alpha,\beta)}
    \cap
    N_{t(\gamma_{(k,l)})}
    \neq
    \emptyset
    }}
    \widetilde{p}_{(k,l)}\\
    &=
    \widetilde{p}_{(j,i)}
    +
    i(\gamma_{(k,l)},\gamma_{(j,i)})
    \widetilde{p}_{(k,l)}.
    \end{aligned}
    \]
    Therefore,
    \[
    \widetilde{M}_{(k,l)}\widetilde{C}
    =
    \widetilde{C}\widetilde{B}_{(k,l)}.
    \]
    Consequently we have (for $\widetilde{f}$),
    \[
    \widetilde{M}\widetilde{C}
    =
    \widetilde{C}M_{\widetilde{f}}.
    \]

    \noindent \textbf{$\widetilde{M}$ is irreducible and aperiodic}: Consider the map $f^2$ on $S_g$ such that the Dehn twist $T_{\gamma}$ in $f$ first twists the curves $\bar{t}(\gamma)^+$ and then twists the curve $\bar{t}(\gamma)^-$, where $\bar{t}$ be a homeomorphism on $S_g$ homotopic to the identity satisfying ${\tau}\subset \bar{t}({\mathcal{C}}\cup{\mathcal{D}})$. Consider the corresponding lift $\widetilde{f}^2$ on $\mathcal{L}$ by taking the preimage of neighborhoods under $q_g$ (see Figure~\ref{fig:both_side_nbhd}). For $(\alpha,\beta),(\alpha',\beta')\in \mathcal{I}_2$, consider $\mu_{(\alpha,\beta)},\mu_{(\alpha',\beta')}\in W(\widetilde{\tau})$ and $\widetilde{p}_{(j,i)},\widetilde{p}_{(j',i')}\in \operatorname{Im}(\widetilde{C})$ such that  $\widetilde{b}_{(\alpha',\beta')}\subset t(\gamma_{(j',i')})$ and $\widetilde{b}_{(\alpha,\beta)}\cap N_{t(\gamma_{(j,i)})}\neq \emptyset$. Such a neighbourhood $N_{t(\gamma_{(j,i)})}$ exists since we apply $\widetilde{f}^2$ on both sides of the curve $t(\gamma_{(j,i)})$. Thus, $\widetilde{f}^2(\widetilde{b}_{(\alpha,\beta)})$ wraps around $t(\gamma_{(j,i)})$ since the local picture of the lift agrees with the Penner's construction on the base surface $S_g$. Hence, $\widetilde{M}^2(\mu_{(\alpha,\beta)})=\widetilde{p}_{(j,i)}+\text{extra term}$. 
    
    Since $M_{\widetilde{f}}$ is a non-negative irreducible and aperiodic matrix, we have that ${M}^2_{\widetilde{f}}={M}_{\widetilde{f}^2}$ is a non-negative irreducible and aperiodic matrix. Thus, for any $(j,i),(j',i')\in \mathcal{I}_1$, there exists a natural number $k$ such that $(\widetilde{M}^2_{\widetilde{f}})^k_{(j,i)(j',i')}>0$. Consider 
    \[\widetilde{M}^{2k+2}(\mu_{(\alpha,\beta)})=\widetilde{M}^{2k}\widetilde{M}^{2}(\mu_{(\alpha,\beta)})=\widetilde{M}^{2k}(\widetilde{p}_{(j,i)})+\widetilde{M}^{2k}(\text{extra term}).\] 
    Further note that 
        \[
    \begin{aligned}
    \widetilde{M}^{2k}(\widetilde{p}_{(j,i)})
    &=\widetilde{M}^{2k}\widetilde{C}(e_{(j,i)}) =\widetilde{C}(M_{\widetilde{f}}^{2k})(e_{(j,i)}) =\widetilde{C}\left(
    \sum_{(j'',i'')\in\mathcal{I}_1}
    \left(M_{\widetilde{f}}^{2k}\right)_{(j,i)(j'',i'')}
    e_{(j'',i'')}
    \right) \\
    &=\sum_{(j'',i'')\in\mathcal{I}_1}
    \left(M_{\widetilde{f}}^{2k}\right)_{(j,i)(j'',i'')}
    \widetilde{p}_{(j'',i'')}.
    \end{aligned}
    \] 
    Since $\widetilde{p}_{(j',i')}=\sum_{\widetilde{b}_{(\alpha,\beta)}\subset t(\gamma_{(j',i')})}\mu_{(\alpha,\beta)}$ and $\widetilde{b}_{(\alpha',\beta')}\subset t(\gamma_{(j',i')})$, we have $\widetilde{M}^{2k}(\widetilde{p}_{(j,i)})$ contains $\mu_{(\alpha',\beta')}$ with positive coefficient $(\widetilde{M}^2_{\widetilde{f}})^k_{(j,i)(j',i')}>0$.
    
    Thus, $\widetilde{M}^{2k+2}(\mu_{(\alpha,\beta)})$ contains $\mu_{(\alpha',\beta')}$ with positive coefficient ($\geq (\widetilde{M}^2_{\widetilde{f}})^k_{(j,i)(j',i')}>0$). Hence, $(\widetilde{M}^{2k+2})_{(\alpha,\beta)(\alpha',\beta')}>0$, which implies that $\widetilde{M}$ is irreducible. Moreover, $\widetilde{M}$ has non-zero diagonal entries due to $\widetilde{f}$ being a lift of $f$ from the Penner's construction and the definition of $\widetilde{M}$. Thus, the period of the vertex $(\alpha,\beta)$ in definition of aperiodic matrix given by $\operatorname{gcd}\{k>0 \ : \  (\widetilde{M}^{k})_{(\alpha,\beta)(\alpha,\beta)}>0\}$ equals 1. Since $\widetilde{M}$ is irreducible, it follows that $\widetilde{M}$ is aperiodic.\\
    \begin{figure}[h!]
	\centering
		\begin{tikzpicture}
				
				\node[anchor=south west, inner sep=0] (image) at (0,0) {\includegraphics[width=0.5\linewidth]{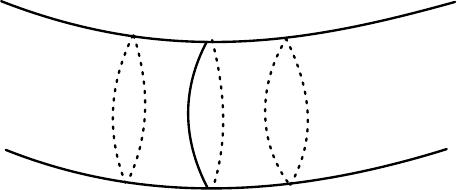}};
				
				\begin{scope}[x={(image.south east)}, y={(image.north west)}]

					\node[label=below:$t(\gamma_{(j,i)}^+)$] at (0.3,0.07) {};
                    \node[label=below:$t(\gamma_{(j,i)}^-)$] at (0.7,0.07) {};
					\node[label=below:$t(\gamma_{(j,i)})$] at (0.48,0.05) {};
                    \node[label=below:$N_{t(\gamma_{(j,i)})} $] at (0.15,0.5) {};
				\end{scope}
			\end{tikzpicture}
			\caption{A neighborhood in which the Dehn twists are supported.}
			\label{fig:both_side_nbhd}
	\end{figure}

    \noindent \textbf{Final Step -- Connecting $\widetilde{A}$ and $\widetilde{M}$}: Recall that to construct the Markov partition $\widetilde{\mathcal{R}}$ of $\mathcal{L}$, we lift the Markov partition $\mathcal{R}$ of $S_g$. We can construct $\mathcal{R}$ by using the collection of rectangles $\mathcal{R}_{\tau}$ (rectangles around each branch) as ${\mathcal{R}}=\vee_{i=-n}^n {f}^i(\operatorname{int}(\mathcal{R}_{{\tau}}))$, for a suitable $n$ (see~\cite{flp}). We can directly take $\widetilde{\mathcal{R}}=\vee_{i=-n}^n \widetilde{f}^i(\operatorname{int}(\mathcal{R}_{\widetilde{\tau}}))$ using the collection of rectangles $\mathcal{R}_{\widetilde{\tau}}$ along the train track $\widetilde{\tau}\subset \mathcal{L}$. Thus, we get the transition matrix $\widetilde{A}$ as in~\eqref{eq:Atilde}. Finally, we obtain a higher block code of the edge shift related to matrix $\widetilde{M}$ which is topologically conjugate to the shift defined by the matrix $\widetilde{A}$. Since the matrix $\widetilde{M}$ is irreducible and aperiodic, we get that the matrix $\widetilde{A}$ is irreducible and aperiodic. This concludes the proof.
\end{proof}

Using the above lemma, we prove that the Markov shift $(\widetilde{\Sigma}_{\widetilde{A}},\sigma)$ is both topologically transitive and  mixing. Through $\widetilde{\pi}$, we get that the lift  $(\mathcal{L},\widetilde{f})$ is also both topologically transitive and mixing. We use the following results from the literature, which are given as Observations 7.2.1. and 7.2.2. in~\cite[Chapter 7]{kit}.
 
 \begin{theorem}
 	Let $B$ be a $0-1$ matrix with countable infinite index set. Then if the matrix $B$ is
 	\begin{enumerate}[(i)]
 		\item irreducible, then the dynamical system $(\Sigma_B, \sigma)$ is topologically transitive.
 		\item both irreducible and aperiodic, then the dynamical system $(\Sigma_B, \sigma)$ is topologically mixing.
 	\end{enumerate}
\end{theorem}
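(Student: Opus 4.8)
The plan is to verify both conclusions directly on cylinder sets, since these form a basis for the topology on $\Sigma_B$, and to translate the dynamical conditions into combinatorial statements about admissible paths in the directed graph $G$ associated to $B$. It suffices to treat basic cylinders $U=[u_{-p}\ldots u_q]$ and $V=[v_{-r}\ldots v_s]$, which are nonempty precisely when $u_{-p}\ldots u_q$ and $v_{-r}\ldots v_s$ are admissible words. A point $x$ lies in $U$ with $\sigma^n(x)\in V$ exactly when $x$ realizes the word of $U$ on coordinates $-p,\ldots,q$, realizes the word of $V$ on coordinates $n-r,\ldots,n+s$, and interpolates admissibly in between; for $n>q+r$ these two blocks are disjoint and the interpolation amounts to an admissible path of length $(n-r)-q$ from the terminal vertex $u_q$ to the initial vertex $v_{-r}$. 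Thus both parts reduce to controlling the lengths of paths between a prescribed pair of vertices.

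For part (i), I would first note that irreducibility forces every vertex to lie on a cycle (since $(B^N)_{ii}>0$ for some $N$), so every vertex has both an incoming and an outgoing edge; this guarantees that any finite admissible word extends to a bi-infinite admissible sequence. Then, given $U$ and $V$ as above, irreducibility supplies some $M\ge 1$ with $(B^M)_{u_q,v_{-r}}>0$, i.e. a path of length $M$ from $u_q$ to $v_{-r}$. Taking $n=M+q+r$ and filling the two outer tails admissibly produces a point $x\in U$ with $\sigma^n(x)\in V$, so $\sigma^n(U)\cap V\neq\emptyset$ and the system is topologically transitive.

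For part (ii), the target is to produce, for each such pair $U,V$, a threshold $N=N(U,V)$ so that $\sigma^n(U)\cap V\neq\emptyset$ for every $n\ge N$; by the interpolation description this reduces to showing $(B^m)_{u_q,v_{-r}}>0$ for all sufficiently large $m$. The key lemma I would isolate is that under irreducibility and aperiodicity, for every pair of vertices $i,j$ the set $\{m:(B^m)_{ij}>0\}$ contains all sufficiently large integers. To prove it, fix $i$ and consider the return-time set $R_i=\{m\ge 1:(B^m)_{ii}>0\}$, which is closed under addition and, by aperiodicity, has $\gcd(R_i)=1$; Schur's theorem on numerical semigroups then yields $N_i$ with $R_i\supseteq\{m:m\ge N_i\}$. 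Composing a fixed $i$-to-$j$ path of length $L$ (existing by irreducibility) with loops at $i$ gives $(B^{m+L})_{ij}>0$ for all $m\ge N_i$. Applying the lemma with $i=u_q$, $j=v_{-r}$ and setting $N=N_{ij}+q+r$ then gives mixing via the same interpolation, and pushing both conclusions through the finite-to-one semiconjugacy $\widetilde{\pi}$ transfers them to $(\mathcal{L},\widetilde{f})$.

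I expect the principal subtlety to lie in the lemma for part (ii): the passage from ``period one'' to ``all large powers are positive'' is exactly the numerical-semigroup (Schur) argument, and one must carry it out vertex-by-vertex so that it remains valid in the countable index setting, where no global primitivity exponent need exist. By contrast, the extension of finite words to bi-infinite sequences and the reduction to cylinders are routine once irreducibility is used to place every vertex on a cycle.
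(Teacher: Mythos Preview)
Your argument is correct and is the standard one for this result. However, the paper does not actually supply a proof of this theorem: it simply quotes the statement from the literature, referring to it as Observations~7.2.1 and~7.2.2 in Kitchens' book, and then invokes it together with Theorem~\ref{thm:irre} and the semiconjugacy $\widetilde{\pi}$ to deduce transitivity and mixing of $(\mathcal{L},\widetilde{f})$. So there is nothing in the paper to compare your proof against beyond the citation.

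Two small remarks on your write-up. First, your final clause about ``pushing both conclusions through the finite-to-one semiconjugacy $\widetilde{\pi}$'' is not part of the theorem as stated (which concerns only $(\Sigma_B,\sigma)$); that transfer is precisely how the paper \emph{uses} the cited result, but it belongs to the subsequent theorem rather than to this one. Second, your observation that the threshold $N$ in part~(ii) must be allowed to depend on the pair $(i,j)$---because no global primitivity exponent need exist in the countable setting---is exactly the right point of care, and is implicit in why Kitchens states the result vertexwise.
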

         
Using this result, Theorem~\ref{thm:irre} and the symbolic coding map $\widetilde{\pi}$, we obtain the following result.

\begin{theorem}\label{thm:TopoTranandmixingth}
For $\widetilde{f}$, the dynamical system $(\mathcal{L}, \widetilde{f})$ is topologically transitive and mixing.
\end{theorem}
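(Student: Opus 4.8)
The plan is to first establish the desired properties for the symbolic model $(\Sigma_{\widetilde{A}}, \sigma)$ and then transport them to $(\mathcal{L}, \widetilde{f})$ through the semiconjugacy $\widetilde{\pi}$. Since $f$ is assumed to be of Penner type, Theorem~\ref{thm:irre} guarantees that the transition matrix $\widetilde{A}$ is both irreducible and aperiodic. Feeding this into the preceding theorem (Observations 7.2.1 and 7.2.2 of~\cite[Chapter 7]{kit}), I immediately obtain that the countable Markov shift $(\Sigma_{\widetilde{A}}, \sigma)$ is topologically transitive and topologically mixing. This is the core dynamical input, and it is the payoff of all the structural work done on $\widetilde{A}$ via the QBD formalism.

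The remaining task is a general transfer principle: a surjective continuous semiconjugacy pushes topological transitivity and topological mixing from its domain onto its range. Concretely, given nonempty open sets $U, V \subseteq \mathcal{L}$, surjectivity of $\widetilde{\pi}$ makes $\widetilde{\pi}^{-1}(U)$ and $\widetilde{\pi}^{-1}(V)$ nonempty, while continuity makes them open in $\Sigma_{\widetilde{A}}$. Applying transitivity (respectively mixing) of $\sigma$, I obtain some $n$ (respectively all $n \geq N$) with $\sigma^{n}(\widetilde{\pi}^{-1}(U)) \cap \widetilde{\pi}^{-1}(V) \neq \emptyset$. Choosing a point $\mathbf{x}$ in this intersection and writing $\mathbf{x} = \sigma^{n}(\mathbf{y})$ with $\mathbf{y} \in \widetilde{\pi}^{-1}(U)$, the commuting relation $\widetilde{\pi} \circ \sigma = \widetilde{f} \circ \widetilde{\pi}$ gives $\widetilde{f}^{n}(\widetilde{\pi}(\mathbf{y})) = \widetilde{\pi}(\mathbf{x}) \in V$ with $\widetilde{\pi}(\mathbf{y}) \in U$, whence $\widetilde{f}^{n}(U) \cap V \neq \emptyset$. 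Running this for a single $n$ yields topological transitivity, and running it for all $n \geq N$ yields topological mixing.

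I would flag one point of care rather than a genuine obstacle: the target space $\mathcal{L}$ is non-compact, so it is worth verifying that the open-set formulations of transitivity and mixing are the ones being transported, rather than any characterization relying on compactness (such as recovering a dense orbit through a Baire-category argument). The transfer argument above uses only surjectivity and continuity of $\widetilde{\pi}$ together with the intertwining relation, and never invokes compactness, so it applies verbatim on $\mathcal{L}$. The genuinely substantive content has already been absorbed into Theorem~\ref{thm:irre}; what remains here is the formal, and essentially routine, descent through $\widetilde{\pi}$.
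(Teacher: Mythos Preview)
Your argument is correct and matches the paper's approach exactly: the paper simply cites Theorem~\ref{thm:irre}, the preceding theorem on countable Markov shifts, and the semiconjugacy $\widetilde{\pi}$, leaving the transfer step implicit, while you have spelled it out. One small narrative correction: the irreducibility and aperiodicity of $\widetilde{A}$ come from the Penner construction in Theorem~\ref{thm:irre}, not from the QBD formalism, which is used separately for the recurrence analysis.
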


Note that the topological transitive property of $(\mathcal{L}, \widetilde{f})$ implies that no subsurface of $\mathcal{L}$ is invariant under the map $\widetilde{f}$. 
        
\subsection{\texorpdfstring{Ergodicity and null recurrence of $\widetilde{f}$}{Ergodicity and null recurrence of f~}}\label{subsec:ergnull}
We have just established that the matrix $\widetilde{A}$ is irreducible and aperiodic. By Theorem~\ref{thm:nonstoc-reducible-case}, it is either null recurrent or transient. In this section, we will explore this further. 

Let $(X,\mathcal{B},\nu)$ be measure space. For a measurable set $B\in\mathcal{B}$, define the first return map $\tau_B: B \to \mathbb{N}\cup\{\infty\}$ as follows:
\[\tau_B(x)=\inf\{n\geq 1:g^n(x)\in B\}.\] 

\begin{definition}
	We say the set $B$ is \textit{recurrent} if $\tau_B<\infty$, a.e. $x\in B$. The map $g$ is \textit{recurrent} if for every set $B\in \mathcal{B}$ is recurrent, in which case we say that $(X,\mathcal{B},\nu,g)$ is a recurrent system. Otherwise, it is known as \textit{transient}.
\end{definition}

\noindent For a recurrent set $B\in\mathcal{B}$ with $\nu(B)<\infty$,
\begin{eqnarray}\label{eq:Brec}
\nu\left(\bigsqcup_{n>0}\left(B\cap g^{-1}(B^c)\cap\dots\cap g^{-(n-1)}(B^c)\cap g^{-n}(B)\right)\right)&=&\nu(B),
\end{eqnarray} 
that is, \[\lim_{n\to\infty}\nu(B\cap g^{-1}(B^c)\cap\dots\cap g^{-(n-1)}(B^c)\cap g^{-n}(B))=0.\]

\begin{definition}
	We say that a recurrent system $(X,\mathcal{B},\nu,g)$ is \textit{positive recurrent} if every set of finite measure $B\in\mathcal{B}$, $\sum_{n\geq 1}n\ \nu(\tau_B=n)<\infty$. \\
	We say a recurrent system $(X,\mathcal{B},\nu,g)$ is \textit{null recurrent} if there exists a set of finite measure $B\in \mathcal{B}$ for which $\sum_{n\geq1} n \ \nu(\tau_B=n)=\infty$.
\end{definition}

When $\nu$ is a finite measure, by the Poincar\'e-recurrence theorem and Kac's lemma, the system $(X,\mathcal{B},\nu)$ is recurrent and positive recurrent. Hence, the dynamical system $(S_g,f,\mu=\mu_A\circ\pi^{-1})$ is recurrent and positive recurrent (this does not use the fact that $f$ is a pseudo-Anosov map).

Now consider the lift $\widetilde{f}$ and the corresponding matrix of symbolic coding $\widetilde{A}$ which is indexed by a countable infinite set. Consider the $\sigma$-algebra $\widetilde{\mathcal{B}}_{\widetilde{A}}$ generated by the cylinder sets of $\widetilde{\Sigma}_{\widetilde{A}}$ and the Markov measure $\mu_{\widetilde{A}}$ on $\widetilde{\mathcal{B}}_{\widetilde{A}}$ given by the row-stochastic matrix $\widetilde{P}$ (as defined earlier). We thus obtain a measure space $(\widetilde{\Sigma}_{\widetilde{A}},\widetilde{\mathcal{B}}_{\widetilde{A}},\mu_{\widetilde{A}})$ with a $\sigma$-finite measure $\mu_{\widetilde{A}}$. The shift map $\sigma$ is a measure-preserving transformation on this space. 

Since  $(\widetilde{\Sigma}_{\widetilde{A}},\widetilde{\mathcal{B}}_{\widetilde{A}},\mu_{\widetilde{A}})$ is a $\sigma$-finite measure space, we cannot use either the Poincar\'e recurrence theorem or Kac's lemma. We will use techniques from infinite ergodic theory~\cite{inferg} to analyze the recurrence properties of this system. Further we use this to study the dynamical properties of the system $(\mathcal{L},\widetilde{f})$ for the measure $\widetilde{\mu}:=\mu_{\widetilde{A}}\circ \widetilde{\pi}^{-1}$. 
 
By checking on the cylinder sets, we can easily see that map $\sigma$ on $(\widetilde{\Sigma}_{\widetilde{A}},\widetilde{\mathcal{B}}_{\widetilde{A}},\mu_{\widetilde{A}})$ is recurrent if and only if the matrix $\widetilde{A}$ is recurrent. Similarly, if the matrix $\widetilde{A}$ is recurrent then the map $\sigma$ on $(\widetilde{\Sigma}_{\widetilde{A}},\widetilde{\mathcal{B}}_{\widetilde{A}},\mu_{\widetilde{A}})$ is positive (null) recurrent if and only if the matrix $\widetilde{A}$ is positive (null) recurrent. We have the following assertions which can be proved using standard arguments.

\begin{theorem}
	The following statements hold true:
	\begin{enumerate}[(i)]
		\item The Markov measure $\mu_{\widetilde{A}}$ is non-atomic.
		\item The shift map $\sigma$ is measure-preserving with respect to the measure $\mu_{\widetilde{A}}$.
		\item The shift map $\sigma$ is ergodic with respect to the measure $\mu_{\widetilde{A}}$.
	\end{enumerate}
\end{theorem}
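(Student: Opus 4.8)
The plan is to recognize $(\Sigma_{\widetilde A},\sigma,\mu_{\widetilde A})$ as a $\mathbb Z$-extension of the base system and reduce all three claims to known properties of $(\Sigma_A,\sigma,\mu_A)$. By Lemma~\ref{lemma:uniqseq}, a base sequence $(i_n)_n\in\Sigma_A$ together with the single level $\phi_2(0)=j_0$ determines $\phi\in\Sigma_{\widetilde A}$ uniquely; hence the coordinate map $\varpi:\Sigma_{\widetilde A}\to\Sigma_A$, $(i_n,j_n)_n\mapsto(i_n)_n$, yields a bijection $\Phi:\Sigma_{\widetilde A}\to\Sigma_A\times\mathbb Z$, $\phi\mapsto(\varpi\phi,\phi_2(0))$, that conjugates $\sigma$ to the skew product $\widehat\sigma(\mathbf y,j)=(\sigma\mathbf y,\,j+\psi(\mathbf y))$, where $\psi(\mathbf y)=j(y_0,y_1)\in\{-1,0,1\}$ is the unique level-shift of Lemma~\ref{lemma:uniqueint}, depending only on $y_0,y_1$. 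The first step is the cylinder identity: for a thin cylinder $\widetilde C$ (all $(i_l,j_l)$ prescribed) one has $\widetilde P_{(i_l,j_l)(i_{l+1},j_{l+1})}=P_{i_li_{l+1}}$ along its admissible path, and $\widetilde p_{(i,j)}=u_iv_i=Z\,p_i$ with $Z=\sum_i u_iv_i$, so $\mu_{\widetilde A}(\widetilde C)=Z\,\mu_A(\varpi(\widetilde C))$. Matching on the generating sets $\Phi^{-1}(C\times\{j_0\})$ then gives $\mu_{\widetilde A}=Z\,(\mu_A\otimes\kappa)$, with $\kappa$ the counting measure on $\mathbb Z$. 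Statement (i) is now immediate: $\mu_{\widetilde A}(\{\phi\})=Z\,\mu_A(\{\varpi\phi\})\,\kappa(\{\phi_2(0)\})=0$, because the finite Parry measure $\mu_A$ on the mixing subshift $(\Sigma_A,\sigma)$ has no atoms (mixing forces $\mu_A(\{\mathbf y\}\cap\sigma^{-n}\{\mathbf y\})\to\mu_A(\{\mathbf y\})^2$, while the left-hand side equals $0$ for all large $n$, whence $\mu_A(\{\mathbf y\})=0$).

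For (ii), by the $\pi$--$\lambda$ theorem it suffices to prove $\sigma$-invariance on cylinders, and this follows from the position-independence of the Markov weights once $\widetilde{\mathbf p}$ is stationary for $\widetilde P$. I would verify stationarity by projecting to the base: for a fixed target $(i',j')$, Lemma~\ref{lemma:uniqueint}(ii) identifies its predecessors with the pairs $(i,j'')$ satisfying $A_{ii'}=1$ (with $j''$ uniquely determined), and there $\widetilde P_{(i,j'')(i',j')}=P_{ii'}$, so $\sum_{(i,j)}\widetilde p_{(i,j)}\widetilde P_{(i,j)(i',j')}=\sum_i u_iv_i\,P_{ii'}=u_{i'}v_{i'}=\widetilde p_{(i',j')}$, the middle equality being exactly the stationarity $\mathbf p P=\mathbf p$ recorded in~\eqref{eq:P-p}. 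In the skew-product picture this is transparent: $\widehat\sigma$ preserves $\mu_A\otimes\kappa$ since $\mu_A$ is $\sigma$-invariant and the fibre translations $j\mapsto j+\psi(\mathbf y)$ preserve the translation-invariant measure $\kappa$. Both (i) and (ii) are routine.

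The substance is (iii), which becomes the ergodicity of the $\mathbb Z$-extension $\widehat\sigma$ over the mixing---hence ergodic---base $(\Sigma_A,\sigma,\mu_A)$. The plan is to apply the standard criterion for abelian group extensions: $\widehat\sigma$ is ergodic provided (a) it is conservative and (b) the cocycle $\psi$ is aperiodic, i.e.\ not cohomologous to a cocycle valued in a proper subgroup of $\mathbb Z$. Condition (b) is the easy one: since $A_1$ and $A_{-1}$ are nonzero, $\psi$ attains both $+1$ and $-1$ on sets of positive $\mu_A$-measure, and the irreducibility and aperiodicity of $\widetilde A$ prevent the level-cocycle from collapsing onto a proper subgroup, so its essential range is all of $\mathbb Z$. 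Condition (a) is the core. By Atkinson's theorem, for the bounded (hence integrable) cocycle $\psi$ over the ergodic base, $\widehat\sigma$ is conservative if and only if $\int_{\Sigma_A}\psi\,d\mu_A=0$; moreover conservativity of $\widehat\sigma$ is precisely recurrence of $\widetilde A$, which by Theorem~\ref{thm:nonstoc-reducible-case} is the null-recurrent alternative realized in our (Penner-type) setting. Granting recurrence, ergodicity follows from (a) and (b) via the classical result that the Markov shift of an irreducible recurrent stochastic matrix is ergodic~\cite{kit,inferg}, a statement that formally needs only irreducibility of $\widetilde A$ together with recurrence.

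I expect the conservativity input in (iii) to be the main obstacle, and it is genuinely indispensable: in the transient alternative of Theorem~\ref{thm:nonstoc-reducible-case} the shift is totally dissipative and hence not ergodic, so (iii) cannot be deduced from irreducibility and aperiodicity alone. The argument must therefore feed in the recurrence of $\widetilde A$---either by citing the null-recurrence obtained from the QBD drift analysis, or, independently, by verifying $\int_{\Sigma_A}\psi\,d\mu_A=0$ directly. The latter is most naturally forced by the symmetry of the construction: the lift $\widetilde f$ commutes with $h_{g-1}$ and respects the $\pi$-rotation of $\mathcal L$ exchanging its two ends (under which $j\mapsto -j$), which sends $\psi\mapsto-\psi$ while preserving $\mu_A$, so the mean drift $\int_{\Sigma_A}\psi\,d\mu_A$ vanishes. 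Once conservativity is secured the remaining verifications are standard, and ergodicity of $\sigma$ then transports through the finite-to-one semiconjugacy $\widetilde\pi$ to the system $(\mathcal L,\widetilde f,\widetilde\mu)$.
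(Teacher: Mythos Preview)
The paper's own ``proof'' is a single sentence---``can be proved using standard arguments''---with the intended input being the irreducibility and aperiodicity of $\widetilde A$ established in Theorem~\ref{thm:irre}. Crucially, the paper then \emph{derives} conservativity and recurrence of $\widetilde A$ \emph{from} (i)--(iii): ergodic $+$ invertible $+$ non-atomic $\Rightarrow$ conservative $\Rightarrow$ recurrent (Poincar\'e) $\Rightarrow$ null recurrent (Theorem~\ref{thm:null-rec}). Your plan runs this implication backwards, and that creates a genuine gap.

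For (iii) you propose to feed in recurrence first, but neither of your two sources for it works here. You write that ``by Theorem~\ref{thm:nonstoc-reducible-case} [this] is the null-recurrent alternative realized in our (Penner-type) setting,'' but that theorem only gives the dichotomy \emph{null recurrent or transient}; the resolution in favor of null recurrence is Theorem~\ref{thm:null-rec}, whose proof invokes the conservativity deduced from the present theorem---so citing it here is circular. Your alternative via Atkinson requires $\int_{\Sigma_A}\psi\,d\mu_A=0$, and you justify this by asserting that $\widetilde f$ ``respects the $\pi$-rotation of $\mathcal L$ exchanging its two ends.'' Nothing in the paper establishes such a symmetry: the lifted involution $\widetilde\iota$ does send $j\mapsto -j$, but Lemma~\ref{lemma:commlem} concerns only the interaction of lifts with $h_{g-1}$, and there is no reason a generic Penner word $f$ should commute with $\iota$ (its curve system $\mathcal C\cup\mathcal D$ is not assumed $\iota$-invariant). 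So the zero-drift claim is unsupported. Similarly, your appeal to ``irreducibility and aperiodicity of $\widetilde A$ prevent the level-cocycle from collapsing onto a proper subgroup'' for essential range $=\mathbb Z$ is asserted, not argued.

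Your skew-product identification and your treatment of (i) and (ii) are correct and considerably more explicit than anything the paper writes. You are also right on a point the paper glosses over: for countable-state Markov shifts, ergodicity of the stationary Markov measure is \emph{equivalent} to irreducibility \emph{plus recurrence}---in the transient case the two-sided shift is totally dissipative and, on a non-atomic space, not ergodic. So (iii) cannot be had from irreducibility and aperiodicity alone, and the paper's one-line ``standard arguments'' is under-specified in exactly the way you suspect. But your proposed fix still needs an honest, non-circular proof of recurrence (equivalently, of zero drift $\mathbf p(P_1-P_{-1})\mathbf 1=0$) before the classical ``irreducible recurrent $\Rightarrow$ ergodic'' result can be invoked.
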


\begin{proof}
	Consider $\bar{v}=\{(i_n,j_n)\}_{n\in\mathbb{Z}}\in \Sigma_{\widetilde{A}}$. Then $\{(i_n)\}_{n\in\mathbb{Z}}\in \Sigma_{A}$ and by the definition of $\widetilde{P}$ and $\mu_{\widetilde{A}}$, we have $\mu_{\widetilde{A}}(\bar{v})\le \mu_A(\{(i_n)\}_{n\in\mathbb{Z}})$. Since $\mu_A$ is a non-atomic measure, the measure $\mu_{\widetilde{A}}$ is also non-atomic.

	Let $[v_0\dots v_\ell]$ be a cylinder set in $\mathcal{B}_{\widetilde{A}}$. As $\widetilde{A}$ has a block structure, there are only finitely many indices $u_1,\dots,u_s$ such that $\widetilde{A}_{u_i,v_0}=1$. Hence,  
	\begin{eqnarray*}
		\mu_{\widetilde{A}}(\sigma^{-1}([v_0\dots v_\ell]))&=&\mu_{\widetilde{A}}(\bigsqcup_{i=1}^{s}([u_i,v_0\dots v_\ell]))=\sum_{i=1}^{s}\mu_{\widetilde{A}}([u_i,v_0\dots v_\ell])=\sum_{i=1}^{s}\widetilde{p}_{u_i}\widetilde{P}_{u_i v_0}\dots \widetilde{P}_{v_{\ell-1} v_\ell}\\
		&=&\widetilde{p}_{v_0}\widetilde{P}_{v_0 v_1}\dots \widetilde{P}_{v_{\ell-1} v_\ell}=\mu_{\widetilde{A}}([v_0\dots v_\ell]).
	\end{eqnarray*}
	
	Consider two cylinder sets $C_1=[v_1 v_2\dots v_k]$ and $C_2=[u_1 u_2\dots u_\ell]$. As $\widetilde{A}$ is iireducible, there exists $n\geq 1$ such that $(\widetilde{A}^n)_{v_ku_1}=1$. Thus, there exists a path $w_1\dots w_n$ from $v_k$ to $u_1$ of length $n$. Hence, $[v_1 v_2\dots v_k w_1\dots w_n u_1 u_2\dots u_\ell]\subset C_1$. Thus, $\mu_{\widetilde{A}}(\sigma^{m+n}(C_1)\cap C_2)>0$. Therefore, the map $\sigma$ is ergodic.
\end{proof}

An invertible ergodic non-singular measure-preserving transformation with respect to a non-atomic measure is necessarily conservative (see~\cite[Proposition 1.2.1]{inferg}). Note that argument in the proof~\cite[Proposition 1.2.1]{inferg} rules out total dissipative case. Consequently, we have the following result. 
    
\begin{theorem}
The map $\sigma$ is conservative with respect to the measure $\mu_{\widetilde{A}}$.
\end{theorem}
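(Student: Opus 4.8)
The plan is to obtain conservativity as a direct consequence of the Hopf decomposition together with the three properties of $\sigma$ recorded in the preceding theorem, thereby realizing the general principle stated just before its statement. First I would observe that $\sigma$, being the two-sided shift on $\Sigma_{\widetilde{A}}$, is an invertible transformation of the $\sigma$-finite measure space $(\Sigma_{\widetilde{A}},\widetilde{\mathcal{B}}_{\widetilde{A}},\mu_{\widetilde{A}})$; by parts (ii) and (iii) of the preceding theorem it is measure preserving (hence non-singular) and ergodic, while by part (i) the measure $\mu_{\widetilde{A}}$ is non-atomic. These are exactly the hypotheses required, so the theorem follows once the underlying principle is justified, which the next two paragraphs carry out.

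Recall from infinite ergodic theory~\cite{inferg} the Hopf decomposition $\Sigma_{\widetilde{A}} = C \sqcup D$ (modulo $\mu_{\widetilde{A}}$-null sets), where $C$ is the conservative part and $D = \bigsqcup_{n\in\mathbb{Z}}\sigma^n(W)$ is the dissipative part for some wandering set $W$, meaning the iterates $\sigma^n(W)$ are pairwise disjoint. Both $C$ and $D$ are $\sigma$-invariant, so by the ergodicity of $\sigma$ exactly one of them has full measure. Thus the system is either conservative or totally dissipative, and it suffices to rule out the latter.

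Suppose, for contradiction, that the system is totally dissipative, so that $\Sigma_{\widetilde{A}} = \bigsqcup_{n\in\mathbb{Z}}\sigma^n(W)$ with $\mu_{\widetilde{A}}(W) > 0$. Since $\mu_{\widetilde{A}}$ is non-atomic, I can split $W = W_1 \sqcup W_2$ with $\mu_{\widetilde{A}}(W_i) > 0$ for $i=1,2$. Every subset of a wandering set is again wandering, so the sets $D_i := \bigsqcup_{n\in\mathbb{Z}}\sigma^n(W_i)$ are disjoint, $\sigma$-invariant, each of positive measure, and satisfy $D_1 \sqcup D_2 = \Sigma_{\widetilde{A}}$. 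This contradicts the ergodicity of $\sigma$, so in fact $\mu_{\widetilde{A}}(D) = 0$ and $\sigma$ is conservative.

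I do not anticipate a serious obstacle, since the three substantive inputs were already proved in the preceding theorem; the only delicate point is the non-atomicity step, where splitting the wandering set into two pieces of positive measure is precisely what produces the contradiction with ergodicity, so that part (i) is indispensable rather than a mere technicality.
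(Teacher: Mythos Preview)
Your proposal is correct and follows the same approach as the paper: the paper simply invokes the general fact that an invertible ergodic non-singular measure-preserving transformation of a non-atomic $\sigma$-finite measure space is conservative, and you have supplied the standard Hopf-decomposition argument that proves this fact. No gap is present; your write-up just makes explicit what the paper leaves as a citation to~\cite{inferg}.
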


Using the Poincar\'e Recurrence theorem for a conservative map~\cite{inferg}, we have that the shift map $\sigma$ is recurrent. Now we prove that the matrix $\widetilde{A}$ is recurrent, which by Theorem~\ref{thm:nonstoc-reducible-case} implies that the matrix is null recurrent.

\begin{theorem}\label{thm:null-rec}
    The matrix $\widetilde{A}$ is null recurrent.
\end{theorem}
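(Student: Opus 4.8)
The plan is to assemble the pieces already in hand into a short trichotomy argument. Recall that an irreducible non-negative matrix with countable index set falls into exactly one of three classes: positive recurrent, null recurrent, or transient. Since $f$ is of Penner type, Theorem~\ref{thm:irre} guarantees that $\widetilde{A}$ is irreducible (and aperiodic), so this classification applies. The strategy is then to exclude positive recurrence and transience separately, leaving null recurrence as the only option.

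First I would exclude positive recurrence using Theorem~\ref{thm:nonstoc-reducible-case}: irreducibility of $\widetilde{A}$ already forces it to be \emph{not} positive recurrent, since the QBD drift condition of~\cite[Theorem 7.3.1]{QBD} cannot place the process in the positive recurrent regime because the two strict inequalities $\mathbf{u}P_{-1}\mathbf{v} - \mathbf{u}P_{1}\mathbf{v} < 0$ and $\mathbf{u}P_{1}\mathbf{v} - \mathbf{u}P_{-1}\mathbf{v} < 0$ are mutually exclusive. Thus $\widetilde{A}$ is either null recurrent or transient. Next I would exclude transience by invoking the conservativity of $\sigma$ on $(\Sigma_{\widetilde{A}},\widetilde{\mathcal{B}}_{\widetilde{A}},\mu_{\widetilde{A}})$, which holds because $\sigma$ is an invertible, ergodic, non-singular, measure-preserving transformation with respect to the non-atomic measure $\mu_{\widetilde{A}}$. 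Applying the Poincar\'e recurrence theorem for conservative maps yields that $\sigma$ is recurrent, and checking on cylinder sets translates this into recurrence of the matrix $\widetilde{A}$. A recurrent matrix is by definition not transient, so combining the two exclusions leaves only one possibility: $\widetilde{A}$ is null recurrent.

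The only genuine subtlety I anticipate is bookkeeping rather than a deep obstacle: one must verify that the measure-theoretic notion of recurrence for $\sigma$ coincides with the matrix-theoretic notion for $\widetilde{A}$, and that the drift-condition inequalities are applied to the correct off-diagonal blocks $P_{-1}$ and $P_{1}$. A self-contained alternative avoids the drift condition entirely: once recurrence of $\widetilde{A}$ is known, the unique (up to scaling) pair of Perron eigenvectors is $\widetilde{\mathbf{u}} = [\dots,\mathbf{u},\mathbf{u},\dots]$ and $\widetilde{\mathbf{v}} = [\dots,\mathbf{v}^{T},\mathbf{v}^{T},\dots]^{T}$, whose inner product $\widetilde{\mathbf{u}}^{T}\widetilde{\mathbf{v}} = \sum_{j\in\mathbb{Z}}(\mathbf{u}\cdot\mathbf{v})$ diverges, being an infinite sum of the positive finite scalar $\mathbf{u}\cdot\mathbf{v}$; the standard criterion that a recurrent irreducible matrix is positive recurrent precisely when this inner product is finite then immediately gives null recurrence. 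Either way, the substantive work resides in the earlier results, and this final statement is essentially their synthesis.
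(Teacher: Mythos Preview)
Your proposal is correct and follows essentially the same route as the paper: first exclude transience by deducing recurrence of $\widetilde{A}$ from conservativity of $\sigma$ via the Poincar\'e recurrence theorem (the paper spells out the cylinder-set verification explicitly using indicator functions and the first-return generating function summing to $1$), then invoke Theorem~\ref{thm:nonstoc-reducible-case} to rule out positive recurrence. Your alternative argument via $\widetilde{\mathbf{u}}^{T}\widetilde{\mathbf{v}}=\infty$ is exactly the content of the remark the paper places immediately after Theorem~\ref{thm:nonstoc-reducible-case}.
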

\begin{proof}
	By Theorem~\ref{thm:nonstoc-reducible-case}, it is enough to show that $\widetilde{A}$ is recurrent.	Since $\sigma$ is a conservative map, it satisfies the Poincar\'e recurrence theorem. That is, for a measurable map $h:\widetilde{\Sigma}_{\widetilde{A}}\to \mathbb{R}$,
	\[\liminf_{n\to\infty}|h(\mathbf{x})-h(\sigma^n\mathbf{x})|=0, \text{ for a.e. } \mathbf{x}\in \widetilde{\Sigma}_{\widetilde{A}}.\]
Now consider a cylinder $B\in{\mathcal{B}_{\widetilde{A}}}$ and the indicator function $h=\chi_B$ on $\widetilde{\Sigma}_{\widetilde{A}}$, which is measurable. Hence, 
	\[\liminf_{n\to\infty}|\chi_B(\mathbf{x})-\chi_B(\sigma^n\mathbf{x})|=0, \text{ for a.e. } \mathbf{x}\in \widetilde{\Sigma}_{\widetilde{A}}.\]
Thus, for a.e. $\mathbf{x}\in B$, its orbit under $\sigma$ visits $B$ infinitely many times. Thus, $B$ is recurrent and by~\eqref{eq:Brec}, \[\mu_{\widetilde{A}}(B) = \mu_{\widetilde{A}}(B)\sum_{n\geq 1}\frac{{({\widetilde{A}}^n)_{(i,j),(i,j)}^*}}{\lambda^n},\]
	where ${({\widetilde{A}}^n)_{(i,j),(i,j)}^*}$ is the number of closed loops at $(i,j)$ in $\mathcal{G}_{\widetilde{A}}$ which return to $(i,j)$ in $n$ steps but not before that. Hence,
	\[\sum_{n\geq 1}\frac{{({\widetilde{A}}^n)_{(i,j),(i,j)}^*}}{\lambda^n}=1,\]
	which implies the matrix $\widetilde{A}$ is recurrent.
\end{proof}

Now we define an ergodic invariant $\sigma$-finite measure for $(\mathcal{L},\widetilde{f})$ as in the following theorem.

\begin{theorem}\label{thm:ergth}
The lift $\widetilde{f}$ is ergodic with respect to the  Borel $\sigma$-finite measure $\widetilde{\mu}:=\mu_{\widetilde{A}}\circ \widetilde{\pi}^{-1}$ on $\mathcal{L}$.
\end{theorem}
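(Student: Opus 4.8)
The plan is to transfer ergodicity from the symbolic system $(\Sigma_{\widetilde{A}}, \sigma, \mu_{\widetilde{A}})$ to $(\mathcal{L}, \widetilde{f}, \widetilde{\mu})$ through the semi-conjugacy $\widetilde{\pi}$, exactly mirroring the way ergodicity of the Parry measure on the subshift of finite type descends to $(S_g, f, \mu)$ in the finite-type case. First I would note that by the preceding theorem, $\sigma$ is ergodic with respect to $\mu_{\widetilde{A}}$, and that $\widetilde{\mu} := \mu_{\widetilde{A}} \circ \widetilde{\pi}^{-1}$ is by construction the pushforward of $\mu_{\widetilde{A}}$ under $\widetilde{\pi}$. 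Since $\widetilde{\pi}$ is a measurable, surjective, finite-to-one semi-conjugacy satisfying $\widetilde{f} \circ \widetilde{\pi} = \widetilde{\pi} \circ \sigma$, the map $\widetilde{f}$ is automatically $\widetilde{\mu}$-measure-preserving: for any Borel set $E \subseteq \mathcal{L}$,
\[
\widetilde{\mu}(\widetilde{f}^{-1}E) = \mu_{\widetilde{A}}(\widetilde{\pi}^{-1}\widetilde{f}^{-1}E) = \mu_{\widetilde{A}}(\sigma^{-1}\widetilde{\pi}^{-1}E) = \mu_{\widetilde{A}}(\widetilde{\pi}^{-1}E) = \widetilde{\mu}(E).
\]

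The core of the argument is the ergodicity transfer. Let $E \subseteq \mathcal{L}$ be a $\widetilde{f}$-invariant Borel set, i.e. $\widetilde{f}^{-1}E = E$ up to $\widetilde{\mu}$-null sets. I would pull it back to $\widetilde{E} := \widetilde{\pi}^{-1}(E) \subseteq \Sigma_{\widetilde{A}}$ and check that $\widetilde{E}$ is $\sigma$-invariant: using $\widetilde{\pi} \circ \sigma = \widetilde{f} \circ \widetilde{\pi}$, we get $\sigma^{-1}\widetilde{E} = \sigma^{-1}\widetilde{\pi}^{-1}E = \widetilde{\pi}^{-1}\widetilde{f}^{-1}E = \widetilde{\pi}^{-1}E = \widetilde{E}$, with the middle equality holding up to a $\mu_{\widetilde{A}}$-null set because the discrepancy $\widetilde{f}^{-1}E \,\triangle\, E$ is $\widetilde{\mu}$-null and its $\widetilde{\pi}$-preimage is then $\mu_{\widetilde{A}}$-null. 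By ergodicity of $\sigma$, either $\mu_{\widetilde{A}}(\widetilde{E}) = 0$ or $\mu_{\widetilde{A}}(\Sigma_{\widetilde{A}} \setminus \widetilde{E}) = 0$. Pushing forward and using $\widetilde{E} = \widetilde{\pi}^{-1}(E)$ together with surjectivity of $\widetilde{\pi}$, this yields $\widetilde{\mu}(E) = 0$ or $\widetilde{\mu}(\mathcal{L} \setminus E) = 0$, which is exactly ergodicity of $\widetilde{f}$.

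The main obstacle, and the step needing the most care, is the measure-theoretic bookkeeping around the null sets, specifically the claim that $\widetilde{\pi}$ maps $\mu_{\widetilde{A}}$-null sets appropriately and that $\widetilde{\pi}^{-1}$ sends $\widetilde{\mu}$-null sets to $\mu_{\widetilde{A}}$-null sets. The latter is immediate from the definition $\widetilde{\mu} = \mu_{\widetilde{A}} \circ \widetilde{\pi}^{-1}$ (if $\widetilde{\mu}(N) = 0$ then $\mu_{\widetilde{A}}(\widetilde{\pi}^{-1}N) = 0$), but one must confirm that the symmetric difference $\widetilde{\pi}^{-1}(\widetilde{f}^{-1}E) \,\triangle\, \widetilde{\pi}^{-1}(E)$ is genuinely null rather than merely small; here the identity $\widetilde{\pi}^{-1}(\widetilde{f}^{-1}E \,\triangle\, E) = \sigma^{-1}\widetilde{E} \,\triangle\, \widetilde{E}$ and the null-preservation of $\widetilde{\pi}^{-1}$ close the gap. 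One should also verify that $\widetilde{E}$ is measurable, which follows since $\widetilde{\pi}$ is continuous, hence Borel measurable. I would remark that the finite-to-one property of $\widetilde{\pi}$, while not logically required for this particular implication (pushforward of an invariant set is handled directly), guarantees that $\widetilde{\mu}$ is a genuine non-trivial $\sigma$-finite measure and that no ergodicity is lost in the descent, paralleling the classical finite-type situation where $\mu = \mu_A \circ \pi^{-1}$ is ergodic precisely because $\pi$ is finite-to-one.
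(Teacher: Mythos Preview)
Your proposal is correct and follows essentially the same approach as the paper: pull back an $\widetilde{f}$-invariant set through the semi-conjugacy $\widetilde{\pi}$, observe that the preimage is $\sigma$-invariant, and invoke ergodicity of $(\Sigma_{\widetilde{A}},\sigma,\mu_{\widetilde{A}})$. If anything, your treatment of invariance ``up to null sets'' is more careful than the paper's, which works with strict invariance and contains a minor slip (writing $\sigma(B_0)=\widetilde{\pi}^{-1}(\widetilde{f}^{-1}(B))$ where $\sigma^{-1}(B_0)$ is meant); your use of $\sigma^{-1}\widetilde{E}=\widetilde{\pi}^{-1}\widetilde{f}^{-1}E$ is the correct direction.
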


\begin{proof} 
We first define a $\sigma$-algebra of subsets of $\mathcal{L}$ and a measure on it. Define $\widetilde{\mathcal{B}}:=\{B\subset \mathcal{L}\ : \ \widetilde{\pi}^{-1}(B)\in \mathcal{B}_{\widetilde{A}}\}$ and $\widetilde{\mu}(B)=\mu_{\widetilde{A}}(\widetilde{\pi}^{-1}(B))$. Since $\sigma$ is measure-preserving with respect to the measure $\mu_{\widetilde{A}}$ and $\widetilde{f}\circ \widetilde{\pi}=\widetilde{\pi}\circ \sigma$, we have that the map $\widetilde{f}$ is a measure-preserving map with respect to the measure measure $\widetilde{\mu}$.

Suppose $\widetilde{f}$ is not ergodic with respect to the measure $\widetilde{\mu}$, then there exists $B\in \widetilde{\mathcal{B}}$ with $\widetilde{\mu}(B)>0$ and $\widetilde{\mu}(B^c)>0$ such that $\widetilde{f}^{-1}(B)=B$. As $\widetilde{\pi}$ is onto, $B_0=\widetilde{\pi}^{-1}(B)\ne\emptyset$ and $B_0^c=\widetilde{\pi}^{-1}(B^c)\ne\emptyset$. Moreover,  $\mu_{\widetilde{A}}(B_0)=\widetilde{\mu}(B)>0$ and $\mu_{\widetilde{A}}(B_0^c)=\widetilde{\mu}(B^c)>0$. Also, $\sigma(B_0)=\sigma(\widetilde{\pi}^{-1}(B))=\widetilde{\pi}^{-1}(\widetilde{f}^{-1}(B))=\widetilde{\pi}^{-1}(B)=B_0$. This contradicts the fact that $\sigma$ is ergodic with respect to the measure $\mu_{\widetilde{A}}$. Hence, we have obtained the desired result.
\end{proof}
 
The measure $\widetilde{\mu}$ is related to the ergodic Borel probability measure $\mu$ corresponding to the pseudo-Anosov map $f$ on $S_g$ as follows. Recall the compact subsurfaces $\{K_j\}_{j\in\mathbb{Z}}$ of the ladder surface $\mathcal{L}$. Consider the rectangle $B=R_{i,j}$ in $K_j$. Then 
\[
\widetilde{\mu}(B)=\mu_{\widetilde{A}}\circ \widetilde{\pi}^{-1}(B)=\mu_{\widetilde{A}}([(i,j)])=p_i=\mu(q_g(B)).
\] 
Using the properties of a sigma-algebra and a measure, for any measurable set $B\in \widetilde{B}$,
\[
\widetilde{\mu}(B)=\sum_{j\in\mathbb{Z}}\mu(q_g(B\cap K_j)).
\]
Note that the measure $\widetilde{\mu}$ is independent of the choice of the Markov partition on $(S_g,f)$. Finally, we have the following desired result.

\begin{theorem}\label{thm:nullreccmapth}
      The lift $\widetilde{f}$ is null recurrent.
 \end{theorem}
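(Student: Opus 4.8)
The plan is to transfer the null recurrence of the symbolic system $(\Sigma_{\widetilde{A}}, \sigma, \mu_{\widetilde{A}})$ to the geometric system $(\mathcal{L}, \widetilde{f}, \widetilde{\mu})$ through the semi-conjugacy $\widetilde{\pi}$. The essential point is that recurrence and null recurrence, as defined via the first-return map and the divergence of $\sum_{n\geq 1} n\,\nu(\tau_B = n)$, are properties that should be invariant under the finite-to-one factor map $\widetilde{\pi}$, since $\widetilde{\mu} = \mu_{\widetilde{A}} \circ \widetilde{\pi}^{-1}$ and $\widetilde{f} \circ \widetilde{\pi} = \widetilde{\pi} \circ \sigma$.

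First I would record what is already available: by Theorem~\ref{thm:null-rec}, the matrix $\widetilde{A}$ is null recurrent, which is equivalent (as noted in Subsection~\ref{subsec:ergnull}) to the shift map $\sigma$ being null recurrent with respect to $\mu_{\widetilde{A}}$ on the symbolic space. By Theorem~\ref{thm:ergth}, $\widetilde{f}$ is measure-preserving and ergodic with respect to $\widetilde{\mu}$, and $\widetilde{\mu}$ is $\sigma$-finite. The goal is therefore to show that null recurrence descends along $\widetilde{\pi}$.

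The key steps, in order, would be as follows. \emph{Step 1:} Establish that $\widetilde{f}$ is conservative with respect to $\widetilde{\mu}$. This follows because $\widetilde{f}$ is an ergodic, non-singular, measure-preserving transformation with respect to the non-atomic $\sigma$-finite measure $\widetilde{\mu}$ (non-atomicity of $\widetilde{\mu}$ follows from non-atomicity of $\mu_{\widetilde{A}}$ together with the fact that $\widetilde{\pi}$ is finite-to-one), and such transformations are automatically conservative by the same argument invoked for $\sigma$ in the preceding theorem. Hence $\widetilde{f}$ is recurrent. \emph{Step 2:} Pull back a finite-measure test set witnessing null recurrence. Take a cylinder $B = [(i,j)] \subset \Sigma_{\widetilde{A}}$ on which $\sum_{n\geq 1} n\,\mu_{\widetilde{A}}(\tau_B = n) = \infty$, and set $\widehat{B} = \widetilde{\pi}(B) \subset \mathcal{L}$, which (up to the finite-to-one and image issues) corresponds to a rectangle $R_{(i,j)}$ with $\widetilde{\mu}(R_{(i,j)}) = p_i < \infty$. \emph{Step 3:} Compare the first-return-time distributions of $\tau_B$ under $\sigma$ and $\tau_{\widehat{B}}$ under $\widetilde{f}$, using the commuting relation $\widetilde{f} \circ \widetilde{\pi} = \widetilde{\pi} \circ \sigma$ and the finite-to-one property to show that the return-time tail of $\widehat{B}$ is comparable to that of $B$, so that $\sum_{n\geq 1} n\,\widetilde{\mu}(\tau_{\widehat{B}} = n) = \infty$ as well, which is exactly null recurrence for $\widetilde{f}$.

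The hard part will be \emph{Step 3}: controlling how the first-return map behaves under a merely finite-to-one (not invertible) semi-conjugacy. Because $\widetilde{\pi}$ can identify finitely many symbolic points, the geometric return time $\tau_{\widehat{B}}(x)$ need not equal the symbolic return time $\tau_B$ at a chosen preimage; a point could return to $\widehat{B}$ in $\mathcal{L}$ via a symbolic orbit that does not return to the particular cylinder $B$. The resolution will be to exploit that the set of points where $\widetilde{\pi}$ fails to be injective is $\mu_{\widetilde{A}}$-null (it lies in the boundaries of the Markov rectangles, whose transverse measure is zero), so that almost every return in $\mathcal{L}$ lifts uniquely to a return in $\Sigma_{\widetilde{A}}$; combined with $\widetilde{f}$-invariance of $\widetilde{\mu}$ this makes the two return-time distributions agree almost everywhere, and the divergence of the weighted sum transfers directly. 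Once this measure-theoretic matching is in place, null recurrence of $\widetilde{f}$ follows, completing the proof.
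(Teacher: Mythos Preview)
Your proposal is correct and follows the same route the paper takes, only more explicitly: in the paper the theorem is stated without a separate proof, as an immediate consequence of Theorem~\ref{thm:null-rec} (null recurrence of $\widetilde{A}$, hence of $\sigma$) together with the semi-conjugacy $\widetilde{\pi}$ and the definition $\widetilde{\mu}=\mu_{\widetilde{A}}\circ\widetilde{\pi}^{-1}$. Your Step~3 spells out the one point the paper leaves tacit, namely that $\widetilde{\pi}^{-1}(R_{(i,j)})$ coincides with the cylinder $[(i,j)]$ up to a $\mu_{\widetilde{A}}$-null set (boundaries of Markov rectangles), so the first-return-time distributions of $R_{(i,j)}$ under $\widetilde{f}$ and of $[(i,j)]$ under $\sigma$ agree and the divergence $\sum_{n\ge 1} n\,\widetilde{\mu}(\tau_{R_{(i,j)}}=n)=\infty$ transfers directly.
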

 
 \begin{proof}
 	Since matrix $\widetilde{A}$ is null recurrent, thus $(\widetilde{\Sigma}_{\widetilde{A}}, \sigma)$ is null recurrent. Hence, by the semi-conjugacy $\widetilde{\pi}$, the system $(\mathcal{L}, \widetilde{f})$ is also null recurrent.
 \end{proof}

Thus, since the lift $\widetilde{f}$ is a conservative map, it satisfies the Poincar\'e recurrence theorem but not Kac's lemma for the measure space $(\mathcal{L},\widetilde{\mathcal{B}},\widetilde{\mu})$. 

Since $\widetilde{f}(K_j)\cap K_j'\neq \phi $ if and only if $|j-j'|\le 1$, we get
 that \[\widetilde{\mu}(\widetilde{f}(K_j))=L_j+C_j+R_j,\]
 where $L_j=\widetilde{\mu}(\widetilde{f}(K_{j})\cap K_{j-1}))$, $C_j=\widetilde{\mu}(\widetilde{f}(K_{j})\cap K_{j}))$, and $R_j=\widetilde{\mu}(\widetilde{f}(K_{j})\cap K_{j+1}))$. Since $h_{g-1} \circ \widetilde{f} \circ h_{g-1}^{-1}=\widetilde{f}$, we have that $L_j=L$ and $R_j=R$, for all $j\in\mathbb{Z}$, for some $L>0$ and $R>0$. The quantity $R$ is the $\widetilde{\mu}$-measure of the region of $K_j$ that goes from left to right across $\partial (K_j)\cap \partial (K_{j+1})$ upon applying the map $\widetilde{f}$ and the quantity $L$ is the $\widetilde{\mu}$-measure of the region of $K_j$ that goes from right to left across $\partial (K_j)\cap \partial (K_{j-1})$ upon applying the map $\widetilde{f}$. We have the following result that follows from Theorem~\ref{thm:null-rec} (which shows that $\widetilde{A}$ is null recurrent) and~\cite[Theorem 7.3.1]{QBD} (which gives a necessary and sufficient (drift) condition for null recurrence).
 
\begin{theorem}\label{thm:left-right-drift}
We have 
\[
\widetilde{\mu}(\widetilde{f}(K_{j})\cap K_{j-1})) = \widetilde{\mu}(\widetilde{f}(K_{j})\cap K_{j+1}),
\]     
for all $j\in\mathbb{Z}$, and these quantities are nonzero and are independent of $j$. \end{theorem}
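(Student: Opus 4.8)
The plan is to dispatch the three assertions separately. The $j$-independence has already been recorded in the paragraph preceding the statement: the relation $h_{g-1}\circ\widetilde f\circ h_{g-1}^{-1}=\widetilde f$ forces $L_j=L$ and $R_j=R$ for all $j\in\mathbb Z$, so it remains to prove $L=R$ and that this common value is nonzero. The first move is to re-express both fluxes through the symbolic coding. Using the $\widetilde f$-invariance of $\widetilde\mu$ I would write $R=\widetilde\mu\big(K_j\cap\widetilde f^{-1}(K_{j+1})\big)$ and $L=\widetilde\mu\big(K_j\cap\widetilde f^{-1}(K_{j-1})\big)$. A point of $K_j\cap\widetilde f^{-1}(K_{j+1})$ is coded by a sequence $\phi$ with $\phi(0)\in\mathcal V_j$ and $\phi(1)\in\mathcal V_{j+1}$; since the rectangle boundaries are $\widetilde\mu$-null and $\widetilde\pi$ is finite-to-one, the preimage $\widetilde\pi^{-1}\big(K_j\cap\widetilde f^{-1}(K_{j+1})\big)$ coincides, up to a null set, with the disjoint union of the cylinders $[.(\alpha,j)(\beta,j+1)]$ over $\alpha,\beta\in\{1,\dots,m\}$. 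Computing each cylinder's measure from the Markov measure $\mu_{\widetilde A}$ attached to $\widetilde P$, and using that the stationary weight of the state $(\alpha,j)$ equals $p_\alpha$, I obtain
\[
R=\sum_{\alpha,\beta}p_\alpha\,(P_1)_{\alpha\beta}=\mathbf p\,P_1\mathbf 1,\qquad L=\sum_{\alpha,\beta}p_\alpha\,(P_{-1})_{\alpha\beta}=\mathbf p\,P_{-1}\mathbf 1,
\]
where $\mathbf 1$ is the all-ones vector and $\mathbf p$ is the stationary distribution of the phase chain $P=P_{-1}+P_0+P_1$.

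The second step is to recognize $R-L=\mathbf p\,(P_1-P_{-1})\mathbf 1$ as exactly the mean drift of the level (first-coordinate) process of the homogeneous QBD of~\eqref{eq:qbd}: the block $P_1$ records the one-step transitions that raise the level and $P_{-1}$ those that lower it, while $\mathbf p$ is the stationary law of the phase. By the drift characterization \cite[Theorem 7.3.1]{QBD}, such a process is positive recurrent, null recurrent, or transient according as this mean drift is negative, zero, or positive; for the two-sided homogeneous chain this is the familiar random-walk dichotomy, whereby recurrence is equivalent to vanishing drift. Theorem~\ref{thm:null-rec} shows that $\widetilde A$, and hence $\widetilde P$ (by the equivalence between the recurrence of $\widetilde A$ and of $\widetilde P$ established earlier), is null recurrent and in particular recurrent. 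Therefore the mean drift must vanish, i.e.\ $R-L=0$, which is the desired equality.

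It remains to see that the common value is nonzero. The structural analysis in the proof of Theorem~\ref{thm:irre} shows that the off-diagonal blocks $A_1$ and $A_{-1}$ are nonzero, so $P_1$ and $P_{-1}$ have the same (nonempty) supports, while the stationary vector $\mathbf p$ of the irreducible stochastic matrix $P$ is strictly positive. Hence each of $R=\mathbf p P_1\mathbf 1$ and $L=\mathbf p P_{-1}\mathbf 1$ is a sum containing at least one strictly positive term, giving $R=L>0$. The step I expect to be the main obstacle is the measure identification in the first paragraph: one must verify carefully that the geometric flux $\widetilde\mu\big(\widetilde f(K_j)\cap K_{j\pm1}\big)$ genuinely equals the symbolic quantity $\mathbf p\,P_{\pm1}\mathbf 1$. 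This entails checking that $\widetilde f(R_{(\alpha,j)})$ crosses each rectangle of the adjacent fundamental domain exactly once (property~(iv) of Definition~\ref{def:MP}), that the identifications made by the finite-to-one $\widetilde\pi$ are confined to a $\widetilde\mu$-null set of rectangle boundaries, and that the ``up'' and ``down'' blocks are matched to $K_{j+1}$ and $K_{j-1}$ with the correct orientation, so that the vanishing of the drift reads as $R=L$ and not the reverse.
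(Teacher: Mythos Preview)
Your proposal is correct and follows essentially the same route as the paper: the paper's proof is a single sentence invoking Theorem~\ref{thm:null-rec} (null recurrence of $\widetilde A$) together with the drift characterization~\cite[Theorem~7.3.1]{QBD}, and you have simply unpacked that sentence by identifying $L$ and $R$ with the symbolic quantities $\mathbf p\,P_{\mp1}\mathbf 1$ and then reading off $R-L=0$ from the vanishing of the mean drift. Your treatment of the nonzero assertion (via the nonvanishing of $A_{\pm1}$ established in Theorem~\ref{thm:irre} and the strict positivity of $\mathbf p$) and of the measure identification through $\widetilde\pi$ are details the paper leaves implicit, but the approach is the same.
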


Using the above results where we establish that $\widetilde{f}$ is topologically transitive and mixing, we obtain two further results about the foliations and density of periodic points for $(\mathcal{L},\widetilde{f})$, the proof of which are similar to the analogous results for $(S_g,f)$. 
       
    \begin{cor}
       Any infinite half leaf of $(\widetilde{\mathcal{F}}_s,\widetilde{\mu}_s)$ and $(\widetilde{\mathcal{F}}_u,\widetilde{\mu}_u)$ originating from a singularity is dense on $\mathcal{L}$. 
         \end{cor}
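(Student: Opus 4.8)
The plan is to run the classical symbolic argument for density of the leaves of a pseudo-Anosov foliation, but over the countable Markov partition $\widetilde{\mathcal{R}}$ in place of a finite one, so that the conclusion is forced by the irreducibility and aperiodicity of $\widetilde{A}$ recorded in Theorem~\ref{thm:irre}. First I would fix a singularity $\widetilde{p}$ of $\widetilde{\mathcal{F}}_u$ (these form a countable isolated set by Proposition~\ref{prop:foliation}) lying on the boundary of the partition, and let $\ell$ be one of its unstable separatrices. Using the semiconjugacy $\widetilde{\pi}\colon\Sigma_{\widetilde{A}}\to\mathcal{L}$ I would code $\widetilde{p}$ by an itinerary $s=(s_n)_{n\in\mathbb{Z}}$ and invoke the local product structure of Definition~\ref{def:MP}: inside a rectangle $R_{s_0}$ the local unstable set $\mathcal{F}^u(\widetilde{p},R_{s_0})$ is exactly the set of points whose backward itinerary agrees with that of $\widetilde{p}$. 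Extending by $\widetilde{f}$, the global unstable leaf through $\widetilde{p}$ is $\widetilde{\pi}$ of the set of admissible sequences whose past \emph{eventually} coincides with $(s_n)_{n\le 0}$, with $\ell$ corresponding to one of the finitely many branches at $\widetilde{p}$.

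Next I would check that the images $\widetilde{\pi}([t_{-l}\ldots t_{-1}.t_0\ldots t_k])$ of the cylinder sets form a neighbourhood basis for the topology of $\mathcal{L}$; this follows from the finite-diameter condition on the birectangles of $\widetilde{\mathcal{R}}$ in Definition~\ref{def:MP}, since matching a long central block of an itinerary confines a point to a rectangle of small diameter about the target. To prove $\overline{\ell}=\mathcal{L}$ it then suffices to show $\ell$ meets every such cylinder image. Given a target admissible word $w=t_{-l}\ldots t_k$, I would pick $N>l$ and, using irreducibility (Theorem~\ref{thm:irre}), splice an admissible sequence $s'$ that agrees with $(s_n)_{n\le -N}$ in the deep past, travels from the state $s_{-N}$ to $t_{-l}$, reads off $w$ across the window $[-l,k]$, and then continues admissibly. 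Since $s'$ agrees with $s$ in the deep past, $\widetilde{\pi}(s')$ lies on the unstable leaf of $\widetilde{p}$; choosing the splice within the correct branch keeps it on $\ell$, and by construction $\widetilde{\pi}(s')$ lands in the prescribed cylinder image. Aperiodicity is what allows the connecting paths to be taken of all sufficiently large lengths, so no parity obstruction arises. The statement for $\widetilde{\mathcal{F}}_s$ then follows verbatim after replacing $\widetilde{f}$ by $\widetilde{f}^{-1}$, which interchanges the two foliations and whose transition matrix $\widetilde{A}^{\mathsf T}$ is again irreducible and aperiodic.

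The step I expect to be the genuine obstacle, and the point at which the non-compactness of $\mathcal{L}$ really intrudes, is ensuring that the separatrix spreads across \emph{all} the blocks $K_j=h_{g-1}^j(K_0)$ as $j\to\pm\infty$, rather than merely accumulating on one end of the ladder. On $S_g$ a single long unstable arc is automatically $\varepsilon$-dense by mixing, but on $\mathcal{L}$ every finite sub-arc $\widetilde{f}^n(I)$ of $\ell$ has finite length $\asymp\lambda^n|I|$ and so lies in a bounded part of $\mathcal{L}$; density is therefore a genuinely infinitary statement about the whole leaf. Irreducibility of $\widetilde{A}$ is exactly what prevents the leaf from being trapped: the tridiagonal block structure couples level $j$ to levels $j\pm1$ through the nonzero off-diagonal blocks $A_{1}$ and $A_{-1}$, so there are admissible transitions that both increase and decrease $j$, letting the splice above reach a rectangle $R_{(i,j)}$ for every $j\in\mathbb{Z}$. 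I would take care to verify that restricting to a single branch at $\widetilde{p}$ still permits these connections in both directions; here I would lean on the balanced, $h_{g-1}$-equivariant structure of $\widetilde{f}$ (the $L=R$ symmetry), which guarantees that the leaf carries no dynamical bias toward either end and that both the $j\to+\infty$ and $j\to-\infty$ directions remain reachable.
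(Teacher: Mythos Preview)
Your proposal is correct: running the FLP symbolic argument over the countable partition $\widetilde{\mathcal{R}}$ and invoking the irreducibility and aperiodicity of $\widetilde{A}$ from Theorem~\ref{thm:irre} does yield density of every separatrix, and your observation that irreducibility (via the nonzero off-diagonal blocks $A_{\pm 1}$) is precisely what lets the splice reach any rectangle $R_{(i,j)}$ for arbitrary $j$ is the right way to handle non-compactness. One small remark: the appeal to the $L=R$ balance in your last paragraph is not needed here, since that is a measure-theoretic statement about null recurrence, whereas density of leaves is purely topological and follows from irreducibility alone.

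The paper takes a different and much terser route. The sentence preceding the corollary announces that the proof is ``similar to the analogous results for $(S_g,f)$'' and thus implicitly defers to the classical FLP argument you have spelled out; the proof text actually supplied, however, only records that $\widetilde{\mu}$ is locally the product of $\widetilde{\mu}_s$ and $\widetilde{\mu}_u$ on the rectangles of $\widetilde{\mathcal{R}}$. That local product structure is an ingredient one might use in a measure-theoretic or ergodic approach to density of leaves, but by itself it does not constitute a proof of the corollary. Your explicit symbolic splicing argument is thus considerably more complete than what appears in the paper, and it makes transparent exactly where the infinite-type issue enters and how Theorem~\ref{thm:irre} resolves it.
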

     \begin{proof}  
     	Let $x$ be a singularity of the foliations $\widetilde{\mathcal{F}}_s$ and $\widetilde{\mathcal{F}}_u$ on $\mathcal{L}$. Consider an infinite half leaf $l$ of $\widetilde{\mathcal{F}}_u$ originating from $x$. Since $\widetilde{f}$ fixes singularities, there exists $k>0$ such that $\widetilde{f}^k(l)=l$. Let $R$ be the rectangle formed by the foliations such that $\operatorname{int}(R)\cap l \neq \emptyset$ and $x\in R$. Note that $\cap_{n>0}\widetilde{f}^{kn}(R)=l$. For any open set $U$ in $\mathcal{L}$, there exists a transverse arc $\tau\subset \widetilde{\mathcal{F}}_u$ that lies inside $U$ and has positive $\widetilde{\mu}_u$-measure. Since $\widetilde{f}$ is topologically mixing, for any transverse $\tau$ in $\mathcal{L}$, there exists $N>0$ such that for all $n\geq N$, $\widetilde{f}^{kn}(R)\cap \tau\neq \emptyset$. Hence, $l\cap U\neq \emptyset$. Thus, $l$ is dense in $\mathcal{L}$. The proof for the stable foliation is similar. 
        \begin{figure}[htbp]
        \centering
        \begin{tikzpicture}

            \node[anchor=south west, inner sep=0] (image2) at (0,0) {\includegraphics[width=0.4\linewidth]{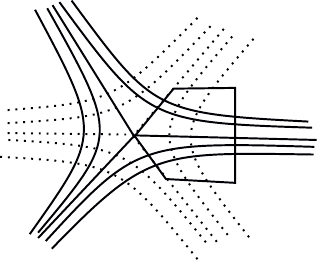}};
           
            \begin{scope}[x={(image2.south east)}, y={(image2.north west)}]
               
                \node at (0.5, 0.53) {$x$};
                 \node at (1.02, 0.48) {$l$};
                  \node at (0.78, 0.68) {$R$};
            \end{scope}
        \end{tikzpicture}
        \caption{Rectangle $R$ and infinite half leaf $l$.}
        \label{fig:dense-leaf}
    \end{figure}

\end{proof}         

 \begin{cor}
       The collection of periodic points of $\widetilde{f}$ is dense in $\mathcal{L}$. 
         \end{cor}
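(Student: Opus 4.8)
The plan is to transport the density of periodic points from the countable Markov shift $(\Sigma_{\widetilde{A}}, \sigma)$ to $(\mathcal{L}, \widetilde{f})$ through the semi-conjugacy $\widetilde{\pi}$, mirroring exactly the classical argument for $(S_g, f)$. The three ingredients I would assemble are: density of periodic points for $\sigma$, the fact that $\widetilde{\pi}$ sends $\sigma$-periodic points to $\widetilde{f}$-periodic points, and the fact that a continuous surjection carries dense sets to dense sets.

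First I would establish that the periodic points of $\sigma$ are dense in $\Sigma_{\widetilde{A}}$. Since the cylinder sets form a basis for the topology, it suffices to produce a periodic point inside every nonempty cylinder $[w_{-k}\ldots w_{-1}.w_0\ldots w_k]$. Non-emptiness of the cylinder means $w_{-k}\ldots w_k$ is an admissible word for $\widetilde{A}$; because $\widetilde{A}$ is irreducible (Theorem~\ref{thm:irre}), there exists an admissible path from the vertex $w_k$ back to the vertex $w_{-k}$. Concatenating the block $w_{-k}\ldots w_k$ with this connecting path yields an admissible loop based at $w_{-k}$, and repeating it bi-infinitely (aligned so that the $(-k)$-th coordinate is $w_{-k}$) produces a $\sigma$-periodic point whose central block is precisely $w_{-k}\ldots w_k$, hence lying in the prescribed cylinder. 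Therefore $\mathrm{Per}(\sigma)$ is dense in $\Sigma_{\widetilde{A}}$.

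Next, using the commuting relation $\widetilde{f}\circ\widetilde{\pi}=\widetilde{\pi}\circ\sigma$, any $\sigma$-periodic point $\mathbf{x}$ with $\sigma^n\mathbf{x}=\mathbf{x}$ satisfies $\widetilde{f}^n(\widetilde{\pi}(\mathbf{x}))=\widetilde{\pi}(\sigma^n\mathbf{x})=\widetilde{\pi}(\mathbf{x})$, so $\widetilde{\pi}(\mathbf{x})$ is $\widetilde{f}$-periodic; thus $\widetilde{\pi}(\mathrm{Per}(\sigma))\subset \mathrm{Per}(\widetilde{f})$. Finally, for any nonempty open $U\subset\mathcal{L}$, surjectivity of $\widetilde{\pi}$ makes $\widetilde{\pi}^{-1}(U)$ a nonempty open set, which therefore meets the dense set $\mathrm{Per}(\sigma)$ at some point $\mathbf{x}$; its image $\widetilde{\pi}(\mathbf{x})\in U$ is a periodic point of $\widetilde{f}$. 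Since $U$ was arbitrary, $\mathrm{Per}(\widetilde{f})$ is dense in $\mathcal{L}$.

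The only step requiring genuine care, and hence the main obstacle, is the first one: confirming that irreducibility of the \emph{countable} matrix $\widetilde{A}$ still forces periodic points to be dense, rather than merely guaranteeing that pairs of vertices communicate. The subtlety is ensuring that the closing-up path keeps the central coordinates of the resulting bi-infinite sequence equal to the prescribed word, so that the constructed periodic point truly lies in the given cylinder. This is resolved because a cylinder constrains only finitely many coordinates, while irreducibility supplies, for the specific pair $(w_k, w_{-k})$, a finite admissible connecting path; all remaining coordinates are then filled in by periodically repeating the loop. Everything beyond this is routine, which is why the conclusion parallels the corresponding statement for $(S_g, f)$.
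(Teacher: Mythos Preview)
Your proposal is correct and follows precisely the approach indicated by the paper, which simply defers to the classical argument in~\cite[Proposition~9.20]{flp}: use irreducibility of the transition matrix to close up admissible words into periodic orbits in the shift, and push them down via the semi-conjugacy. Your handling of the one genuine subtlety (that irreducibility of a \emph{countable} matrix still yields a finite closing path, so the periodic point lands in the prescribed cylinder) is exactly what is needed to adapt the compact-surface proof to $(\mathcal{L},\widetilde{f})$.
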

         \begin{proof}
         	The proof is similar to that of~\cite[Proposition 9.20]{flp} about density of the collection of periodic points of a pseudo-Anosov map on a compact surface.
         \end{proof}
      
\section{\texorpdfstring{An example of a family of null recurrent pseudo-Anosov-like maps on $\mathcal{L}$}{An example of a family of null recurrent pseudo-Anosov-like maps on L}}\label{sec:example}

We will use the generating set of $\mathrm{LMod}_{q_g}(S_g)$ as described in Theorem~\ref{thm:deygen} to construct a family of null recurrent pseudo-Anosov-like maps on the ladder surface $\mathcal{L}$ as lifts of pseudo-Anosov maps on $S_g$ under $q_g$. Following the notation in Theorem~\ref{thm:deygen}, for $1 \leq i \leq g-2$, consider the mapping class $G_i =T_{\alpha_i}^{-1}T_{i(\alpha_i)}$ and let $\beta_i = G_i(c_{i+1})$, as shown in Figure~\ref{fig:beta-curve}. Since $G_i \circ T_{c_i} \circ G_i^{-1} = T_{\beta_i}$, it follows that $T_{\beta_i} \in \mathrm{LMod}_{q_g}(S_g)$.
\begin{figure}[htbp]
        \centering
        \begin{tikzpicture}
           
            \node[anchor=south west, inner sep=0] (image2) at (0,0) {\includegraphics[width=0.5\linewidth]{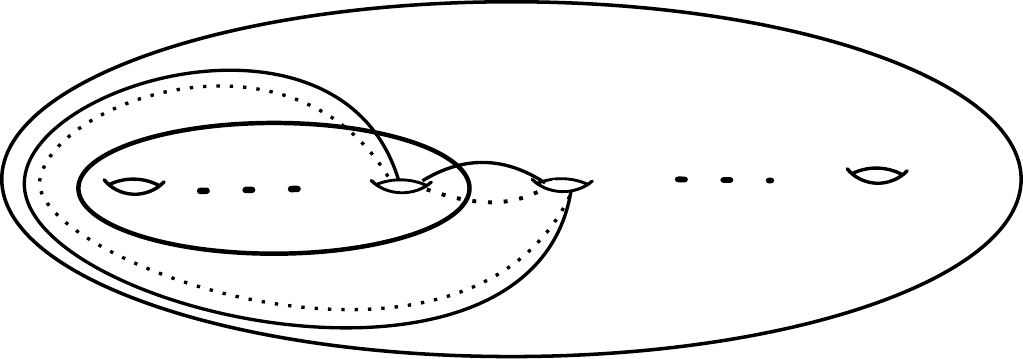}};
           
            \begin{scope}[x={(image2.south east)}, y={(image2.north west)}]
               
                \node at (0.58, 0.26) {$\beta_i$};
                 \node at (0.3, 0.55) {$\alpha_i$};
                  \node at (0.5, 0.65) {$c_{i+1}$};
            \end{scope}
        \end{tikzpicture}
        \caption{The curve $\beta_i$ in $S_g$.}
        \label{fig:beta-curve}
    \end{figure}

\begin{example}
Let $\mathcal{B}= \{\beta_1,\ldots,\beta_{g-2}\}$ and consider any nonempty subset $\mathcal{B}'$ of $\mathcal{B}$. Let $\mathcal{C}$ and $\mathcal{D}$ be multicurves on $S_g$ given by:
\[\mathcal{C}=\{c_1,\dots,c_{g-1},a_g\}\cup \mathcal{B}',  \ \  \mathcal{D}=\{b_2,\dots,b_g\}.\]
Since $S_g\setminus\mathcal{C} \cup \mathcal{D}$ is a finite union of embedded disks in $S_g$, it $\mathcal{C} \cup \mathcal{D}$ fills $S_g$. Thus, by Theorems~\ref{thm:deygen} and~\ref{thm:penner}, we obtain a family $\mathcal{F}_{\mathcal{B}'}$ of liftable pseudo-Anosov mapping classes in $\mathrm{LMod}_{q_{g}}(S_g)$ that factor into left-handed Dehn twists about the curves in $\mathcal{C}$ and right-handed Dehn twists about the curves in $\mathcal{S}$ (where each Dehn twist appears at least once). By Corollary~\ref{cor:seq}, the family $\mathcal{F}_{\mathcal{B}'}$ lifts to a family $\widetilde{\mathcal{F}}_{\mathcal{B}'}$ of null recurrent Penner-like pseudo-Anosov mapping classes in $\mathrm{Map}(\mathcal{L})$ by Theorem~\ref{thm:nullreccmapth}. Observe that one can replace the choices for $\mathcal{C}$ and $\mathcal{D}$ with \[\mathcal{C}=\{c_1,\dots,c_{g-1},a_g\}\cup B', \ \ \mathcal{D}=\{a_1,b_2,\dots,b_g\},\] or \[\mathcal{C}=\{c_1,\dots,c_{g-1}\}\cup B', \ \  \mathcal{D}=\{a_1,b_2,\dots,b_g\},\] to construct new families of such maps.

We will now describe a lift $\widetilde{F}$ of a particular $F \in \mathrm{LMod}_{q_3}(S_3)$ in the family $\mathcal{F}_{\mathcal{B}'}$. The mapping class $F = T^{-1}_{b_2} T^{-1}_{b_3} T_{c_1} T_{c_2} T_{a_3} T_{\beta_1}$ lifts to \[\widetilde{F} = \lim_{k \to \infty} \left( \prod_{i = -k}^{k} T^{-1}_{b_{i,2}} \right) \left( \prod_{i = -k}^{k} T^{-1}_{b_{i,3}} \right) \left( \prod_{i = -k}^{k} T_{c_{i,1}} \right) \left( \prod_{i = -k}^{k} T_{c_{i,2}} \right) \left( \prod_{i = -k}^{k} T_{a_{i,3}} \right) \left( \prod_{i = -k}^{k} T_{\beta_{i,1}} \right),
\] 
under the cover $q_3$, where the factor twists are about the curves shown in Figure~\ref{fig:comparison-lift-examples}.
\begin{figure}[htbp]
        \centering
         \begin{tikzpicture}
            
            \node[anchor=south west, inner sep=0] (image) at (0,0) {\includegraphics[width=0.6\linewidth]{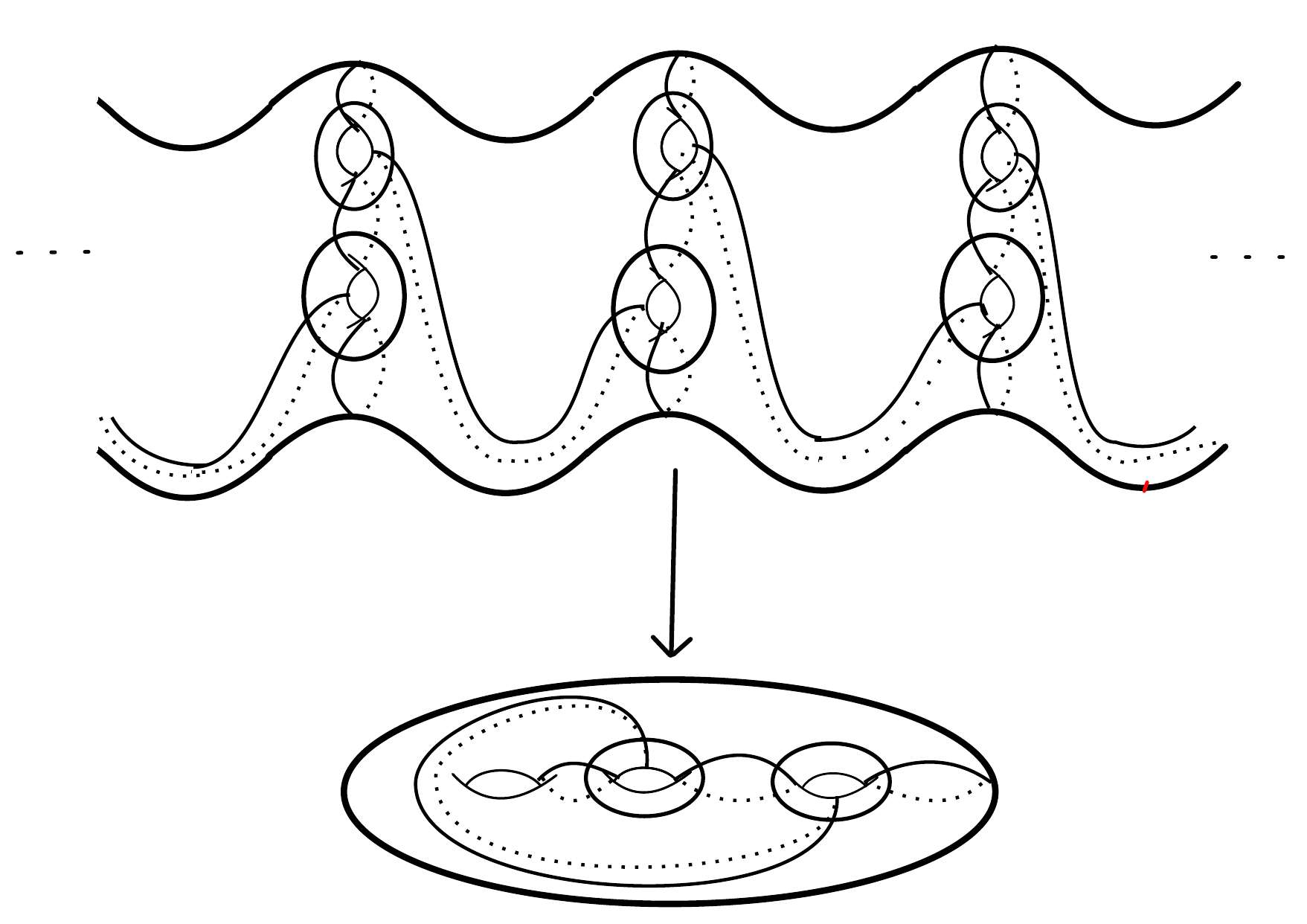}};

          \begin{scope}[x={(image.south east)}, y={(image.north west)}]
             
               \node[scale=0.8] at (0.27, 0.98) {$a_{-1,3}$};
                \node[scale=0.8] at (0.53, 0.98) {$a_{0,3}$};
                \node[scale=0.8] at (0.77, 0.98) {$a_{1,3}$};

                \node[scale=0.8] at (0.18, 0.68) {$b_{-1,2}$};
                \node[scale=0.8] at (0.2, 0.82) {$b_{-1,3}$};
                \node[scale=0.8] at (0.44, 0.68) {$b_{0,2}$};
                \node[scale=0.8] at (0.46, 0.85) {$b_{0,3}$};
                \node[scale=0.8] at (0.69, 0.7) {$b_{1,2}$};
                \node[scale=0.8] at (0.7, 0.85) {$b_{1,3}$};

                \node[scale=0.8] at (0.21, 0.76) {$c_{-1,2}$};
                \node[scale=0.8] at (0.46, 0.76) {$c_{0,2}$};
                \node[scale=0.8] at (0.71, 0.76) {$c_{1,2}$};

                \node[scale=0.8] at (0.15, 0.6) {$\beta_{-2,1}$};
                \node[scale=0.8] at (0.4, 0.6) {$\beta_{-1,1}$};
                \node[scale=0.8] at (0.65, 0.6) {$\beta_{0,1}$};
                \node[scale=0.8] at (0.882, 0.6) {$\beta_{1,1}$};

                \node[scale=0.8] at (0.27, 0.5) {$c_{-1,1}$};
                \node[scale=0.8] at (0.51, 0.51) {$c_{0,1}$};
                \node[scale=0.8] at (0.76, 0.5) {$c_{1,1}$};

                \node[scale=0.8] at (0.52, 0.22) {$b_{2}$};
                \node[scale=0.8] at (0.61, 0.22) {$b_{3}$};
                \node[scale=0.8] at (0.57, 0.2) {$c_{2}$};
                \node[scale=0.8] at (0.44, 0.2) {$c_{1}$};
                \node[scale=0.8] at (0.79, 0.15) {$a_{3}$};
                \node[scale=0.8] at (0.3, 0.15) {$\beta_{1}$};
                \node at (0.57, 0.35) {$q_3$};
            \end{scope}
        \end{tikzpicture}
        \caption{The lift of $F \in \mathrm{LMod}_{q_3}(S_3)$ to $\mathrm{Map}(\mathcal{L})$.}
        \label{fig:comparison-lift-examples}
    \end{figure}
\end{example}
\section{Acknowledgements} 
The first author is supported by the ANRF, Department of Science and Technology, India, File No. MTR/2023/000167. The second author is supported by the Prime Minister’s Research Fellowship (PMRF) (Id: 0403022), Ministry of Education, Government of India. The third author is supported by an ANRF, Department of Science and Technology, India, File No. MTR/2023/000610.

\bibliographystyle{plain} 
\bibliography{pA_Ladder}
\end{document}